\newcommand{\R}{\mathbb{R}}                                     
\newcommand{\X}{\mathbb{X}}                                     
\newcommand{\innerprod}[2]{\left\langle #1,\, #2 \right\rangle} 
\newcommand{\subfiguretitle}[1]{{\scriptsize{#1}} \\[0.5ex]}
\newcommand\xqed[1]{\leavevmode\unskip\penalty9999 \hbox{}\nobreak\hfill \quad\hbox{#1}}
\newcommand{\exampleSymbol}{\xqed{$\triangle$}}
\newtheorem{theorem}{Theorem}[section]
\newtheorem{corollary}[theorem]{Corollary}
\newtheorem{lemma}[theorem]{Lemma}
\newtheorem{proposition}[theorem]{Proposition}
\newtheorem{definition}[theorem]{Definition}
\theoremstyle{definition}
\newtheorem{example}[theorem]{Example}
\newtheorem{remark}[theorem]{Remark}
\title{Transition manifolds of \\ complex metastable systems}
\author[1]{Andreas Bittracher}
\author[1]{P\'eter Koltai}
\author[1]{Stefan Klus}
\author[1]{Ralf Banisch}
\author[2]{Michael Dellnitz}
\author[1,3]{Christof Sch\"utte}
\affil[1]{Department of Mathematics and Computer Science, Freie Universit\"at Berlin, Germany}
\affil[2]{Department of Mathematics, Paderborn University, Germany}
\affil[3]{Zuse Institute Berlin, Germany}
\date{}
\begin{document}
\maketitle

\begin{abstract}
We consider complex dynamical systems showing metastable behavior but no local separation of fast and slow time scales. The article raises the question of whether such systems exhibit a low-dimensional manifold supporting its effective dynamics. For answering this question, we aim at finding nonlinear coordinates, called reaction coordinates, such that the projection of the dynamics onto these coordinates preserves the dominant time scales of the dynamics. We show that, based on a specific reducibility property, the existence of good low-dimensional reaction coordinates preserving the dominant time scales is guaranteed. Based on this theoretical framework, we develop and test a novel numerical approach for computing good reaction coordinates. The proposed algorithmic approach is fully local and thus not prone to the curse of dimension with respect to the state space of the dynamics. Hence, it is a promising method for data-based model reduction of complex dynamical systems such as molecular dynamics.
\end{abstract}

\section{Introduction}

With the advancement of computing power, we are able to simulate and analyze more and more complicated and high-dimensional models of dynamical systems, ranging from astronomical scales for the simulation of galaxies, over planetary and continental scales for climate and weather prediction, down to molecular and sub-atomistic scales via, e.g., Molecular Dynamics (MD) simulations aimed at gaining insight into complex biological processes. Particular aspects of such processes, however, can often be described by much simpler means than the full process, thus \emph{reducing} the full dynamics to some \emph{essential} behavior or \emph{effective dynamics} in terms of some essential observables of the system. Extracting these observables and the related effective dynamics from a dynamical system, though, is one of the most challenging problems in computational modeling \cite{FGH14a}.

One prominent example of dynamical reduction is arguably given by a  variety of  multiscale systems with explicit fast-slow time scale separation, mostly singularly perturbed systems, where either the fast component is considered in a quasi-stationary regime (i.e.\ the slow components are fixed and assumed not to change for the observation period), or the effective behavior of the fast components is injected into the slow processes, e.g. by averaging or homogenization~\cite{PaSt08}. Much of the recent attention has been directed to the case where the deduction of the slow (or fast) effective dynamics is not possible by purely analytic means, due to the lack of an analytic description of the system, or because the complexity of the system renders this task unfeasible~\cite{FGH14a, FGH14b,CKLMN08, DTGCK16, NLCK06, SEKC09, CrMa17,hmm_like_2007,eqfree-algo}. However, all of these approaches still depend on some local form of time scale separation between the ``fast'' and the ``slow'' components of the dynamics.

The focus of this work is on specific multiscale systems \emph{without} local dynamical slow-fast time scale separation, but for which a reduction to an effective dynamical behavior supported on some low-dimensional manifold is still possible. The dynamical property lying at the heart of our approach is that there is a time scale separation in the \emph{global kinetic} behavior of the process, as opposed to the aforementioned slow-fast behavior encoded in the \emph{local dynamics}. Here, global kinetic behavior means that the multiple scales  show up if we consider the \emph{Fokker--Planck equation} associated with the dynamics, say~$\dot{u} = \mathcal{L}u$, where the Fokker--Planck operator~$\mathcal{L}$ will have several small eigenvalues, while the rest of its spectrum is significantly larger. Such dynamical systems exhibit \emph{metastable} behavior and the slow time scales are the time scales of statistical relaxation between the main metastable sets, while there is no time scale gap for the local dynamics within each of the metastable regions \cite{BovierII,SchuetteSarich2013}.   

Global time scale separation  induced by metastability has been analyzed for deterministic \cite{deju:99} and stochastic dynamical systems \cite{direct-hmc,hums:02} for more than a decade. A typical trajectory of a metastable dynamical system will spend most time within the metastable sets, while rare transitions between these sets happen as sudden ``jumps'' roughly along low-dimensional \emph{transition pathways} that connect the metastable sets \cite{Dellago2009_review,PNAS09,tpt2010}.
For an example, see Figure~\ref{fig:bananapot}.
\begin{figure}[htb]
    \centering
    \begin{minipage}{0.5\textwidth}
        \centering
        \subfiguretitle{a)}
        \includegraphics[width=.9\textwidth]{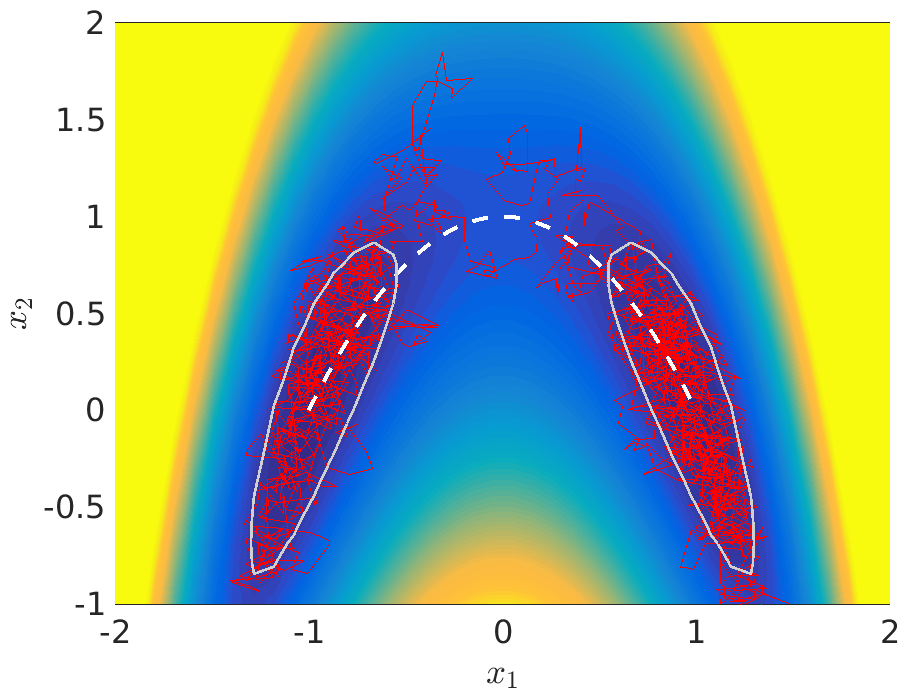}
    \end{minipage}
    \begin{minipage}{0.49\textwidth}
        \centering
        \subfiguretitle{b)}
        \includegraphics[width=\textwidth]{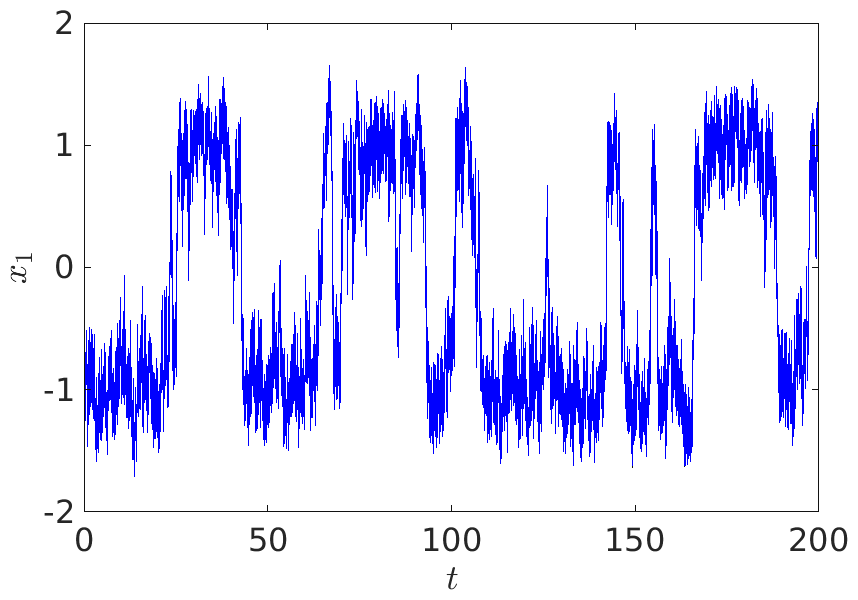}
    \end{minipage}
    \caption{a) Curved double-well potential with two metastable sets (areas encircled by light grey lines) around the global minima $(-1,0)$ and $(1,0)$. In a typical trajectory (red line), transitions between the metastable sets are rare events and generally happen along the transition path (white dashed line). b) The $x_1$-component of a longer trajectory that shows multiple rare transitions (or events).}
    \label{fig:bananapot}
\end{figure}

The tool to describe the global kinetic behavior of a metastable system is the so-called \emph{transfer operator} (the evolution operator of the Fokker--Planck equation), which acts on functions on the state space. The time scale separation we rely on here implies a spectral gap for this operator. This fact has been exploited to find low-dimensional representations of the global kinetics in form of Markov chains whose (discrete) states represent the metastable sets while the transition probabilities between the states approximate the jump statistics between the sets on long time scales. Under the name ``Markov State Models'' (MSM), this approach has led to a variety of methods \cite{A19-1,SchuetteSarich2013} with broad application, e.g.,  in molecular dynamics, cf.~\cite{direct-hmc,pande2010everything,msm_milestoning,CN14}. 
This reduction comes with a price: Since the relaxation kinetics is described just by jumps between the metastable sets in a (finite) discrete state space, any information about the transition process and its dynamical features is lost. A variety of approaches have been developed for complementing the MSM approach appropriately \cite{discreteTPT}, but a continuous (in time and space) low-dimensional effective description based on MSMs allowing to understand the transition mechanism is infeasible.

In another branch of the literature, again heavily influenced by molecular dynamics applications, model reduction techniques have been developed that assume the existence of a low-dimensional \emph{reaction coordinate} or \emph{order parameter} in order to construct an effective dynamics or kinetics: Examples are free energy based techniques~\cite{Torrie1977,metadynamics-review}, trajectory-based sampling techniques~\cite{Elber,NE-FFS,TIS,tika2013}, methods based on diffusive processes~\cite{BeHu10,ZHS16,PaSt08}, and many more that rely on the assumption that the reaction coordinates are known.
The problem of actually \emph{constructing} good reaction coordinates remains an area of ongoing research \cite{RC_overview}, to which this paper contributes. Typically, reaction coordinates are either postulated using system specific expert knowledge~\cite{CaTh93,SoEtAl96}, an approximation to the dominant eigenfunctions of the transfer operator is sought~\cite{SchuetteSarich2013,CN14,tika2013}, or machine learning techniques are proposed \cite{Dinner}. Froyland et al.~\cite{FGH14a} show that these eigenfunctions are indeed optimal --- in the sense of optimally representing the slow dynamics --- but for high dimensional systems computational reaction coordinate identification still is often infeasible. In the context of \emph{transition path theory}~\cite{VE06}, the committor function is known to be an ideal~\cite{LuVE14} reaction coordinate. In \cite{Henkelman2012}, the authors construct a level set of the committor using support vector machines, but the computation of reaction coordinates is infeasible for high-dimensional systems. The main problem in computing reaction coordinates for high-dimensional metastable systems results from the fact that all of these algorithms try to solve a \emph{global} problem in the entire state space that cannot be decomposed easily into purely local computations.

In this article, we elaborate on the definition, existence and algorithmic identification of reaction coordinates for metastable systems: We define reaction coordinates as a small set of \emph{nonlinear} coordinates on which a reduced system~\cite{LeLe10, ZHS16} can be defined having the same dominant time scales (in terms of transfer operator eigenvalues) as the original system. We then consider a low-dimensional state space on which the reduced dynamics is a Markov process. Thus, our approach utilizes concepts and transfer operator theory developed previously, but in our case the projected transfer operator is still \emph{infinite-dimensional}, in stark contrast to its reduction to a stochastic matrix in the MSM approach. 

The contribution of this paper is twofold. First, we develop a conceptual framework that identifies \emph{good} reaction coordinates as the ones that parameterize a low-dimensional \emph{transition manifold}~$\mathbb{M}$ in the function space $L^1$, which is the natural state space of the Fokker--Planck equation $\dot u = \mathcal{L}u$ associated with the dynamics. The property which defines~$\mathbb{M}$ is that, on moderate time scales $t_\text{fast} < t \ll t_\text{slow}$, the \emph{transition density functions} of the dynamics concentrate around $\mathbb{M}$. We provide evidence that such an $\mathbb{M}$ indeed exists due to metastability and the existence of transition pathways. Crucially, the dimension of $\mathbb{M}$ is often lower then the number of dominant eigenfunctions.

Second, we present an algorithm to construct approximate reaction coordinates. Our algorithm is data-driven and \emph{fully local}, thus circumventing the main problem of previously proposed algorithms: In order to compute the value of the desired reaction coordinate $\xi$ at a location $x$ in the state space $\X$, only the ability to simulate short trajectories initialized at $x$ is needed. In particular, we assume no a priori knowledge of metastable sets, no global equilibration, and we do not need to resolve the slow time scales numerically. The algorithm is built on two pillars:
\begin{enumerate}
\item The simulation time scale~$t$ can be chosen a lot smaller than the dominant time scales~$t_\text{slow}$ of the system, such that it is feasible to simulate many short trajectories of length~$ t $.
\item We utilize \emph{embedding techniques} inspired by the seminal work of Whitney~\cite{Whi36} and the recent work \cite{DHZ16} that allows one to take almost any mapping into a Euclidean space of more than twice the dimension of the manifold~$\mathbb{M}$ and to obtain a one-to-one image of it.
\end{enumerate}
These two pillars together with the low-dimensionality of~$\mathbb{M}$ imply that we can represent the image of the reaction coordinate in a space with moderate (finite) dimension. Then, we can use established \emph{manifold learning} techniques~\cite{NLCK06, CKLMN08, SEKC09} to obtain a parametrization of the manifold in the embedding space and pull this parametrization back to the original state space, hence obtaining a reaction coordinate.

The locality of the algorithm also implies that reaction coordinates are only computed in the region of state space where sampled points are available. This is a common issue with manifold learning algorithms; here it manifests as the transition manifold being reliably learned only in regions we have good sampling coverage of. However, recently several methods have appeared in the literature that allow a fast exploration of the state space. These methods do not provide equilibrium sampling, but instead try to rapidly cover the essential part of the state space with sampling points. This can be achieved with enhanced sampling methods such as Umbrella Sampling \cite{wham1, wham2}, Metadynamics \cite{meta1, meta2}, Blue-Moon sampling \cite{blue}, Adaptive Biasing Force method \cite{darve2008adaptive}, or Temperature-Accelerated Molecular Dynamics \cite{maragliano2006temperature}, as well as trajectory-based techniques like Milestoning \cite{faradjian2004computing}, Transition Interface Sampling \cite{moroni2004investigating}, or Forward Flux Sampling \cite{becker2012non}. Alternatively, several techniques like the equation-free approach \cite{eqfree-algo}, the heterogeneous multiscale method (HMM) \cite{weinan2003heterognous} and methods based on diffusion maps \cite{chiavazzo2016imapd} have been developed to utilize short unbiased MD trajectories for extracting information that allows much larger timesteps. This can be combined with reaction coordinate based effective dynamics \cite{ZHS16, ZS17}.

In principle, the method we present in this article may be combined with any enhanced sampling technique in order to generate sampling points that cover a large part of the state space. For simplicity, we will use long MD trajectories to generate our sampling points, but we do not require that the points are distributed according to an equilibrium distribution.

The paper is organized as follows: Section~\ref{sec:Transfer operators and their properties} introduces transfer operators, which describe the global kinetics of the stochastic process. Based on these transfer operators, we define metastability, i.e.\ the existence of dominant time scales. In Section~\ref{sec:projTO}, we describe the model reduction techniques \emph{Markov state modeling} and \emph{coordinate projection} that are designed to capture the dominant time scales of metastable systems. Furthermore, we characterize \emph{good} reaction coordinates. In the first part of Section~\ref{sec:identifying RC}, we show that our dynamical assumption ensures the existence of good reaction coordinates, then in the second part we describe our approach to compute them. Several numerical examples are given in Section~\ref{sec:Numerical Examples}. Concluding remarks and an outlook are provided in Section~\ref{sec:Conclusion}.

\section{Transfer operators and their properties}
\label{sec:Transfer operators and their properties}

As mentioned in the introduction, global properties of dynamical systems such as meta\-stable sets or a partitioning into fast and slow subprocesses can be obtained using transfer operators associated with the system and their eigenfunctions. In this section, we will introduce different transfer operators needed for our considerations.

\subsection{Transfer operators}

In what follows, $ \mathsf{P}[\,\cdot \mid \mathfrak{E}] $ denotes probabilities conditioned on the event $ \mathfrak{E} $ and $ \mathsf{E[\cdot \mid \mathfrak{E}]} $ the expectation value. Furthermore, $ \{\mathbf{X}_t\}_{t \ge 0} $ is a stochastic process defined on a state space $ \X \subset \R^n $.

\begin{definition}[Transition density function]
Let $ \mathbb{A} $ be any measurable set, then the \emph{transition density function} $ p^t \colon \X \times \X \to \R_{\ge 0} $ of a time-homogeneous stochastic process $ \{\mathbf{X}_{t}\}_{t \ge 0} $ is defined by
\begin{equation*}
    \mathsf{P}[\mathbf{X}_t \in \mathbb{A} \mid \mathbf{X}_0 = x] = \intop_{\mathbb{A}} p^t(x,y) \, \mathrm{d}y.
\end{equation*}
That is, $ p^t(x,y) $ is the conditional probability density of $ \mathbf{X}_t = y $ given that $ \mathbf{X}_0 = x $.
\end{definition}

With the aid of the transition density function, we can now define transfer operators. Note, however, that the transition density is in general not known explicitly and needs to be estimated from simulation data.
In what follows, we assume that there is a \emph{unique} equilibrium density $ \varrho $ that is invariant under $ \{\mathbf{X}_t\}_{t \ge 0} $, that is, it satisfies
\[
\varrho(x) = \intop_\X p^t(y,x) \varrho(y) \, \mathrm{d}y,
\]
a.e.~on $\X$. Let $\mu$ denote the associated invariant measure $ \mathrm{d}\mu = \varrho \, \mathrm{d}x $.

\begin{definition}[Transfer operators] \label{def:transferops}
Let $ p \in L^1(\X) $ be a probability density\footnote{We denote by~$L^q$ the space (equivalence class) of $q$-integrable functions with respect to the Lebesgue measure. $L^q_{\nu}$ denotes the same space of function, now integrable with respect to the measure~$\nu$.}, $ u = p / \varrho \in L_\mu^1(\X) $ be a probability density with respect to the equilibrium density $ \varrho $,  and $ f \in L^{\infty}(\X) $ an observable of the system. For a given lag time $ t $:
\begin{enumerate}[(a)]
\item The \emph{Perron--Frobenius operator} $ \mathcal{P}^t \colon L^1(\X) \to L^1(\X) $ is defined by the unique linear extension of
\begin{equation*}
    \mathcal{P}^t p(x) = \intop_{\X} p^t(y,x) \, p(y) \, \mathrm{d}y
\end{equation*}
to~$L^1(\X)$.
\item The \emph{Perron--Frobenius operator $ \mathcal{T}^t \colon L_\mu^1(\X) \to L_\mu^1(\X) $ with respect to the equilibrium density} is defined by the unique linear extension of
\begin{equation*}
    \mathcal{T}^t u(x) = \intop_{\X} \frac{\varrho(y)}{\varrho(x)} \, p^t(y, x) \, u(y) \,\mathrm{d}y
\end{equation*}
to~$L^1_{\mu}(\X)$.
\item The \emph{Koopman operator} $ \mathcal{K}^t \colon L^{\infty}(\X) \to L^{\infty}(\X) $ is defined by
\begin{equation} \label{eq:Koopman operator}
    \mathcal{K}^t f(x) = \intop_{\X} p^t(x,y) \, f(y) \,\mathrm{d}y
                       = \mathsf{E}[f(\mathbf{X}_t) \mid \mathbf{X}_0 = x].
\end{equation}
\end{enumerate}
All these are well-defined non-expanding operators on the respective spaces.
\end{definition}

The equilibrium density $ \varrho $ satisfies $ \mathcal{P}^t \varrho=\varrho$, that is, $ \varrho $ is an eigenfunction of $ \mathcal{P}^t $ with associated eigenvalue $ \lambda_0 = 1 $. The definition of $ \mathcal{T}^t $ relies on $\varrho$, we have ~$\varrho\, \mathcal{T}^tu = \mathcal{P}^t (u \varrho)$.

Instead of their natural domains from Definition~\ref{def:transferops}, all our transfer operators are considered on the following Hilbert spaces:~$\mathcal{P}^t:L^2_{1/\mu}(\X) \to L^2_{1/\mu}(\X)$, $\mathcal{T}^t:L^2_{\mu}(\X) \to L^2_{\mu}(\X)$, and~$\mathcal{K}^t:L^2_{\mu}(\X) \to L^2_{\mu}(\X)$. They are still well-defined non-expansive operators on these spaces~\cite{BaRo95,SchCa92,KNKWKSN17}.

Furthermore, we will need the notion of reversibility for our considerations. Reversibility means that the process is statistically indistinguishable from its time-reversed counterpart.

\begin{definition}[Reversibility]
A system is said to be \emph{reversible} if the detailed balance condition
\begin{equation*}
    \varrho(x) \, p^t(x,y) = \varrho(y) \, p^t(y,x)
\end{equation*}
is satisfied for all $ x, y \in \X$.
\end{definition}

In what follows, we will assume that the system is reversible.

One prominent example for a class of SDEs satisfying uniqueness of the equilibrium density and reversibility is given by  
\begin{equation}
    \mathrm{d}\mathbf{X}_t = -\nabla V(\mathbf{X}_{t})\,\mathrm{d}t + \sqrt{2\beta^{-1}} \, \mathrm{d}\mathbf{W}_t\,.
    \label{eq:overdampedLangevin}
\end{equation}
Here,~$V$ is called the potential,~$\beta$ is the non-dimensionalized inverse temperature, and $\mathbf{W}_t$ is a standard Wiener process. The process generated by~\eqref{eq:overdampedLangevin} is ergodic and thus admits a unique positive equilibrium density, given by $\varrho(x)=\exp(-\beta V(x))/Z$, under mild growth conditions on the potential~$V$~\cite{MaSt02,MaStHi02}. Note that the subsequent considerations hold for all stochastic processes that satisfy reversibility and ergodicity with respect to a unique positive invariant measure and are \emph{not} limited to the class of dynamical systems given by (\ref{eq:overdampedLangevin}). See \cite{SchuetteSarich2013} for a discussion of a variety of stochastic dynamical systems that have been considered in this context.

As a result of the detailed balance condition, the Koopman operator $ \mathcal{K}^t $ and the Perron--Frobenius operator with respect to the equilibrium density $ \mathcal{T}^t $ become identical and we obtain
\begin{equation*}
    \innerprod{\mathcal{P}^t f}{g}_{1/\mu} = \innerprod{f}{\mathcal{P}^t g}_{1/\mu}
    \quad \text{and} \quad
    \innerprod{\mathcal{T}^t f}{g}_\mu        = \innerprod{f}{\mathcal{T}^t g}_\mu\,,
\end{equation*}
i.e.\ all the transfer operators become self-adjoint on the respective Hilbert spaces from above.
Here~$\langle\cdot,\cdot\rangle_{\mu}$ and~$\langle\cdot,\cdot\rangle_{1/\mu}$ denote the natural scalar products on the weighted spaces~$L^2_{\mu}$ and~$L^2_{1/\mu}$, respectively. 

\subsection{Spectral decomposition}

Due to the self-adjointness, the eigenvalues $ \lambda_i^t $ of $ \mathcal{P}^t $ and $ \mathcal{T}^t $ are real-valued and the eigenfunctions form an orthogonal basis with respect to $ \innerprod{\cdot}{\cdot}_{1/\mu} $ and $ \innerprod{\cdot}{\cdot}_\mu $, respectively. In what follows, we assume that the spectrum of $ \mathcal{T}^t $ is purely discrete given by (infinitely many) isolated eigenvalues. This assumption is made for the sake of simplicity. It is actually not required for the rest of our considerations; it would be sufficient to assume that the spectral radius $R$ of the essential spectrum of $ \mathcal{T}^t $ is strictly smaller than 1, and some isolated eigenvalues of modulus larger than $R$ exist. It has been shown that this condition is satisfied for a large class of metastable dynamical systems, see \cite[Sec. 5.3]{SchuetteSarich2013} for details.
For example, the process generated by~\eqref{eq:overdampedLangevin} has purely discrete spectrum under mild growth and regularity assumptions on the potential $V$.

Under this condition, ergodicity implies that the dominant eigenvalue $ \lambda_0 $ is the only eigenvalue with absolute value $ 1 $ and we can thus order the eigenvalues so that
\begin{equation*}
    1 = \lambda_0^t > \lambda_1^t \ge \lambda_2^t \ge \dots.
\end{equation*}
The eigenfunction of~$\mathcal{T}^t$ corresponding to $ \lambda_0 = 1 $ is the constant function $ \varphi_0 = \mathds{1}_{\X} $. Let $ \varphi_i $ be the normalized eigenfunctions of $ \mathcal{T}^t $, i.e.~$ \innerprod{\varphi_i}{\varphi_j}_\mu = \delta_{ij} $, then any function $ f \in L_{\mu}^2(\X) $ can be written in terms of the eigenfunctions as $ f = \sum_{i=0}^\infty \innerprod{f}{\varphi_{i}}_\mu \, \varphi_i $. Applying $ \mathcal{T}^t $ thus results in
\begin{equation*}
    \mathcal{T}^t f = \sum_{i=0}^\infty \lambda_i^t \, \innerprod{f}{\varphi_i}_\mu \, \varphi_i.
\end{equation*}
For more details, we refer to~\cite{KNKWKSN17} and references therein.

\subsection{Implied time scales} \label{ssec:implied_ts}

For some $ d \in \mathbb{N} $, we call the $ d+1 $ dominant eigenvalues $ \lambda_0^t, \dots, \lambda_d^t $ of $ \mathcal{T}^t $ the \emph{dominant spectrum} of $ \mathcal{T}^t $, i.e.
\begin{equation*}
    \sigma_\textrm{dom}(\mathcal{T}^t) := \{ \lambda_0^t, \dots, \lambda_d^t \}.
\end{equation*}
Usually, $ d $ is chosen in such a way that there is a \emph{spectral gap} after $ \lambda_d^t $, i.e.~$ 1-\lambda_d^t \ll \lambda_d^t - \lambda_{d+1}^t $. The \emph{(implied) time scales} on which the associated dominant eigenfunctions decay are given by
\begin{equation} \label{eq:implied time scales}
    t_i = -t/\log(\lambda_i^t).
\end{equation}
If $ \mathcal{T}^t $ is a semigroup of operators, then there are $ \kappa_i \le 0 $ with $ \lambda_i^t = \exp(\kappa_i t) $ such that $t_i = -\kappa_i^{-1} $ holds. Assuming there is a spectral gap, the dominant time scales satisfy $ t_1 \ge \ldots \ge t_d \gg t_{d+1} $. These are the time scales of the \emph{slow} dynamical processes, also called \emph{rare events}, which are of primary interest in applications. The other, \emph{fast} processes are regarded as fluctuations around the relative equilibria (or \emph{metastable states}) between which the relevant slow processes travel.

\section{Projected transfer operators and reaction coordinates}
\label{sec:projTO}

The purpose of dimension reduction in molecular dynamics is to find a reduced dynamical model that captures the dominant time scales of the system correctly while keeping the model as simple as possible. In this section, we will introduce two different projections and the corresponding projected transfer operators. The goal is to find suitable projections onto the slow processes.

\subsection{Galerkin projections and Markov state models}

One frequently used approach to obtain a reduced model is \emph{Markov state modeling}. The goal is to find a model that is as simple as possible and yet correctly reproduces the dominant time scales. Given a fixed $ t > 0 $, most authors \cite{PNAS09,pande2010everything} refer to a Markov state model (MSM) as a matrix $ T^t \in \R^{(d+1) \times (d+1)} $ such that
\begin{equation} \label{eq:MSMapprox}
    \sigma_\textrm{dom}(\mathcal{T}^t) \approx \sigma_\textrm{dom}(T^t),
\end{equation}
and it has been studied in detail under which condition this can be achieved~\cite{DjurdjevacSarichSchuette2012,SarichNoeSchuette2010}. 

There are different ways of constructing an MSM, maybe the most intuitive one is also the simplest: Let the entries of $ T^t $ be the transition rates between metastable sets. A typical molecular system with $ d $ dominant time scales will have $ d+1 $ metastable sets $\mathbb{C}_1, \dots, \mathbb{C}_{d+1} $ (also called \emph{cores}) and its dynamics is characterized by transitions between these sets and fluctuations inside the sets (see Figure~\ref{fig:bananapot} for an illustration). Since the fluctuations are on faster time scales, we neglect them by setting \cite{direct-hmc}
\begin{equation}
    T^t_{\textrm{core}, ij} = \mathsf{P}_{\mu}[\mathbf{X}_t\in\mathbb{C}_j\,\big\vert\, \mathbf{X}_0\in\mathbb{C}_i]\,,
    \label{eq:coreMSMprob}
\end{equation}
where $ \mathsf{P}_{\mu} $ denotes the probability measure conditioned to the initial condition $ \mathbf{X}_0 $ being distributed according to $\mu$. Thus,~$T^t_{\textrm{core}, ij}$ is the probability that the process in equilibrium jumps to the metastable set~$\mathbb{C}_j$ in time~$t$, given that it started in the metastable set~$\mathbb{C}_i$. Note that~\eqref{eq:coreMSMprob} can be equivalently rewritten as
\begin{equation}
    T^t_{\textrm{core}, ij} 
        = \frac{\innerprod{\mathcal{T}^t\mathds{1}_{\mathbb{C}_i}}{\mathds{1}_{\mathbb{C}_j}}_\mu}
               {\innerprod{\mathds{1}_{\mathbb{C}_i}}{\mathds{1}_{\mathbb{C}_i}}_\mu}\,,
    \label{eq:coreMSMfun}
\end{equation}
where $ \mathds{1}_{\mathbb{C}_i} $ is the characteristic function of the set $ \mathbb{C}_i $.

Equation~\eqref{eq:coreMSMfun} readily suggests that $ T^t_\textrm{core} $ is a projection of the transfer operator $ \mathcal{T}^t $, namely its \emph{Galerkin projection} onto the space spanned by the characteristic functions $ \mathds{1}_{\mathbb{C}_1},\ldots, \mathds{1}_{\mathbb{C}_{d+1}} $ \cite{direct-hmc}.

\begin{definition}[Galerkin projection]\label{def:galerkin_projection}
Given a set of basis functions $\psi_1,\ldots,\psi_m\in L^2_{\mu}(\X)$, let $\mathbb{V} := \mathrm{span}\{\psi_1,\ldots,\psi_m\}$ and $ \psi:=(\psi_1,\ldots,\psi_m)^\intercal $. The projection to~$\mathbb{V}$ or, equivalently, to $\psi$, $\Pi_{\mathbb{V}} = \Pi_{\psi} \colon L^2_{\mu}(\X)\to \mathbb{V}$ is defined as
\begin{equation*}
    \innerprod{\Pi_{\psi}f - f}{g}_\mu = 0 \qquad \forall\, f\in L^2_{\mu}(\X),\, \forall\,g\in\mathbb{V}\,.
\end{equation*}
The residual projection is given by~$\Pi_{\psi}^{\perp} = \mathrm{Id} - \Pi_{\psi}$, where $\mathrm{Id}$ is the identity.
The Galerkin projection of~$\mathcal{T}^t$ to~$\mathbb{V}$ is given by the linear operator~$T^t \colon \mathbb{V} \to \mathbb{V}$ satisfying
\begin{equation*}
    \innerprod{\mathcal{T}^t f - T^t f}{g}_\mu = 0 \qquad \forall\, f,g\in\mathbb{V}\,.
\end{equation*}
\end{definition}

Equivalently, $ T^t = \Pi_{\psi} \mathcal{T}^t $. We also denote the extension of $ T^t $ to the whole $ L^2_{\mu}(\X) $, given by $ \Pi_{\psi}\mathcal{T}^t\Pi_{\psi} $, by $ T^t $. Furthermore, we denote the matrix representation of $ T^t $ with respect to the basis $ (\psi_0,\dots,\psi_d) $ by $ T^t $ as well. Either it will be clear from the context which of the objects $T^t$ is meant or it will not matter; e.g., the dominant spectrum is the same for all of them.

We see that~$T_\textrm{core} $ is the matrix representation of the Galerkin projection with respect to the basis functions $ \innerprod{\mathds{1}_{\mathbb{C}_i}}{\mathds{1}_{\mathbb{C}_i}}_{\mu}^{-1} \mathds{1}_{\mathbb{C}_i}$,  $i=1,\ldots,d+1$. More general MSMs can be built by Galerkin projections of the transfer operator to spaces spanned by other --- not necessarily piecewise constant --- basis functions \cite{Web06,msm_milestoning, WeberFackeldeySchuette2017,KKS16,KNKWKSN17,tika2013,variational2013}. However, in some of these methods, one also often loses the interpretation of the entries of the matrix $ T^t $ as probabilities.

Ultimately, the best MSM in terms of approximation quality in~\eqref{eq:MSMapprox} is given by the Galerkin projection of $ \mathcal{T}^t $ onto the space spanned by its dominant eigenfunctions $ \varphi_0 $, $\dots $, $\varphi_d $. This space is invariant under $ \mathcal{T}^t $ since $\mathcal{T}^t \varphi_i = \lambda_i^t \varphi_i $ and the dominant eigenvalues (and hence the time scales) are the same for the MSM and for $ \mathcal{T}^t $. Due to the curse of dimensionality, however, the computation of the eigenfunctions $ \varphi_i $ is in general infeasible for high-dimensional problems.
\begin{remark}
There are quantitative results assessing the error in~\eqref{eq:MSMapprox} of the MSM in terms of the projection errors~$\|\Pi_{\psi}^{\perp}\varphi_i\|_{L^2_{\mu}}$, $i=0,\ldots,d$, cf.~\cite[Section 5.3]{SchuetteSarich2013}. One can obtain a weaker, but similar result from our Lemma~\ref{lem:EVapprox} in the next section.
\end{remark}

\subsection{Coordinate projections and effective transfer operators}
\label{ssec:coord proj}

While the MSMs from above successfully reproduce the dominant time scales of the original system, they often discard all other information about the system, such as the transition paths between metastable sets. Minimal coordinates that describe these transitions are called \emph{reaction coordinates} and reducing the dynamics onto these coordinates yields \emph{effective dynamics}~\cite{LeLe10,ZHS16}. The goal of the previous section --- namely to retain the dominant time scales of the original dynamics in a reduced model --- can now be reformulated for this lower-dimensional effective dynamics or, equivalently, for its (effective) transfer operator.

Let $ \xi \colon \X \to \R^k $ be a $ C^1 $ function, where $ k \le n $. Let $ \mathbb{L}_z = \{x \in \X \mid \xi(x) = z\} $ be the $ z $-level set of $ \xi $. The so-called \emph{coarea formula}~\cite[Section 3.2]{Fed69}, which can be considered as a nonlinear variant of Fubini's theorem, splits integrals over~$\X$ into consecutive integrals over level sets of~$\xi$ and then over the range of~$\xi$. For $f\in L^2_{\mu}(\X)$, we have\footnote{The coarea formula holds for~$L^1$ functions, but~$L^2_{\mu}\subset L^1_{\mu}$, since~$\mu$ is a probability measure (i.e., it is finite).}
\begin{equation}
\int_{\X} f(x)\,d\mu(x) = \int_{\xi(\X)} \int_{\mathbb{L}_z} f(x')\varrho(x')\det\left(\nabla\xi(x')^{\intercal} \nabla \xi(x')\right)^{-1/2}\,d\sigma_z(x')\,dz\,,
\label{eq:coarea}
\end{equation}
where~$ z = \xi(x) $ and~$ \sigma_z $ is the surface measure on~$ \mathbb{L}_z $. The \emph{coordinate projection}, defined next, averages a given function along the level sets of a coordinate function~$\xi$.
\begin{definition}[Coordinate projection]
For $ f \in L^2_\mu(\X) $, we define
\begin{eqnarray}
    P_\xi f(x) & = & \int_{\mathbb{L}_z} f(x') \,d\mu_z(x') \label{eq:Pxi1}\\
               & = & \frac{1}{\Gamma(z)}\int_{\mathbb{L}_z}f(x')\varrho(x')\det(\nabla\xi(x')^{\intercal}\nabla\xi(x'))^{-1/2}\,d\sigma_z(x'), \label{eq:Pxi2}
\end{eqnarray}
where~$ \mu_z $ is a probability measure on $ \mathbb{L}_z $ with density $ \frac{\varrho}{\Gamma(z)}\det(\nabla\xi^{\intercal} \nabla \xi)^{-1/2}$ with respect to $ \sigma_z $. Here, $\Gamma(z)$ is just the normalization constant so that $ \mu_z $ becomes a probability measure. The residual projection is given by $ P_\xi^\perp = \mathrm{Id} - P_{\xi} $.
\end{definition}
To get a better feeling for the action of~$P_{\xi}$, note that~$P_{\xi}f(x)$ is the expectation of $ f(\mathbf{x}') $ with respect to $ \mu $ conditional to $ \xi(\mathbf{x}') = \xi(x) $, i.e.
\begin{equation*}
    P_{\xi}f(x) = \mathsf{E}_{\mu}\left[f(\mathbf{x}')\,\big\vert\, \xi(\mathbf{x}') = \xi(x)\right]\,.
\end{equation*}
Or, in other words, $ \mu_z $ is the marginal of $\mu$ conditional to $ \xi(x) = z $. Note, in particular, that~$P_{\xi}f$ is itself a function on~$ \X $, but it is constant on the level sets of~$\xi$, and thus let us set~$\widehat{P_{\xi}f}(\xi(x)) = P_{\xi}f(x)$ for~$x\in\mathbb{L}_{\xi(x)}$. It follows from the coarea formula~\eqref{eq:coarea} and~\eqref{eq:Pxi2} that
\begin{equation}
    \int_{\X} f(x)\,d\mu(x) = \int_{\xi(\X)} \Gamma(z)\widehat{P_{\xi}f}(z)\,dz\,.
    \label{eq:splitintPxi}
\end{equation}
Next, we state some properties of the coordinate projection.

\begin{proposition} \label{prop:PxiProperties}
The coordinate projection has the following properties.
\begin{enumerate}[(a)]
\item $P_{\xi}$ is a linear projection, i.e.~$P_{\xi}^2 = P_{\xi}$.
\item $P_{\xi}$ is self-adjoint with respect to $ \innerprod{\cdot}{\cdot}_\mu $.
\item $P_{\xi} \colon L^2_{\mu}(\X) \to L^2_{\mu}(\X)$ is orthogonal, hence non-expansive, i.e.~$\|P_{\xi}f\|_{L^2_{\mu}} \le \|f\|_{L^2_{\mu}}$.
\end{enumerate}
\end{proposition}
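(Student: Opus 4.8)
The plan is to verify the three claims in sequence, since each naturally feeds into the next. The key computational device throughout is the characterization $P_{\xi}f(x) = \mathsf{E}_{\mu}[f(\mathbf{x}') \mid \xi(\mathbf{x}') = \xi(x)]$, i.e.\ that $P_{\xi}$ is conditional expectation with respect to the $\sigma$-algebra generated by $\xi$; all three properties are instances of the standard Hilbert-space facts about conditional expectations, but we should prove them directly from the defining formula~\eqref{eq:Pxi2} and the coarea formula~\eqref{eq:coarea} to keep the paper self-contained.

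First, for \emph{(a)}, I would observe that $P_{\xi}f$ is constant on each level set $\mathbb{L}_z$ by construction, so applying $P_{\xi}$ again averages a constant against the probability measure $\mu_z$ and returns that same constant; hence $P_{\xi}(P_{\xi}f) = P_{\xi}f$. Formally: $\widehat{P_{\xi}f}(z)$ depends only on $z$, so $\int_{\mathbb{L}_z} (P_{\xi}f)(x')\,d\mu_z(x') = \widehat{P_{\xi}f}(z)\int_{\mathbb{L}_z} d\mu_z(x') = \widehat{P_{\xi}f}(z)$ because $\mu_z$ is a probability measure. Linearity is immediate from linearity of the integral in~\eqref{eq:Pxi2}.

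Next, for \emph{(b)}, I would compute $\innerprod{P_{\xi}f}{g}_{\mu}$ by splitting the integral over $\X$ into an integral over level sets followed by one over $\xi(\X)$, exactly as in~\eqref{eq:splitintPxi}. Since $P_{\xi}f$ is constant on $\mathbb{L}_z$ with value $\widehat{P_{\xi}f}(z)$, the inner integral of $(P_{\xi}f)\,g$ over $\mathbb{L}_z$ against $\mu_z$ equals $\widehat{P_{\xi}f}(z)\,\widehat{P_{\xi}g}(z)$ — here one uses that $\int_{\mathbb{L}_z} g\,d\mu_z = \widehat{P_{\xi}g}(z)$ by definition. Thus $\innerprod{P_{\xi}f}{g}_{\mu} = \int_{\xi(\X)} \Gamma(z)\,\widehat{P_{\xi}f}(z)\,\widehat{P_{\xi}g}(z)\,dz$, which is visibly symmetric in $f$ and $g$, giving $\innerprod{P_{\xi}f}{g}_{\mu} = \innerprod{f}{P_{\xi}g}_{\mu}$. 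Finally \emph{(c)} follows formally: an operator that is both idempotent (a) and self-adjoint (b) is an orthogonal projection, hence non-expansive, since $\|f\|_{L^2_{\mu}}^2 = \|P_{\xi}f\|_{L^2_{\mu}}^2 + \|P_{\xi}^{\perp}f\|_{L^2_{\mu}}^2 \ge \|P_{\xi}f\|_{L^2_{\mu}}^2$ using $\innerprod{P_{\xi}f}{P_{\xi}^{\perp}f}_{\mu} = \innerprod{P_{\xi}^2 f}{P_{\xi}^{\perp}f}_{\mu} = \innerprod{P_{\xi}f}{P_{\xi}P_{\xi}^{\perp}f}_{\mu} = 0$.

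The only genuine subtlety — and the step I expect to require the most care — is not any of the algebra above but the \emph{well-definedness} of $P_{\xi}$ as a map $L^2_{\mu}(\X)\to L^2_{\mu}(\X)$: one must check that $P_{\xi}f \in L^2_{\mu}(\X)$ whenever $f\in L^2_{\mu}(\X)$, that the level-set integrals in~\eqref{eq:Pxi2} are finite for $\mu$-a.e.\ $z$, and that the construction is independent of the representative of $f$. This is handled cleanly by Jensen's inequality applied to the conditional expectation (or directly: by Cauchy--Schwarz on $\mu_z$, $|\widehat{P_{\xi}f}(z)|^2 \le \int_{\mathbb{L}_z} |f|^2\,d\mu_z$, and then integrating against $\Gamma(z)\,dz$ and using~\eqref{eq:splitintPxi} to get $\|P_{\xi}f\|_{L^2_{\mu}} \le \|f\|_{L^2_{\mu}}$), so in fact the norm bound in (c) can be extracted at the same time. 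I would state this integrability check first, before (a), so that the subsequent manipulations are justified.
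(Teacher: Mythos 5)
Your proposal is correct and follows essentially the same route as the paper's proof in Appendix~\ref{app:Pxi}: idempotence from the constancy of $P_{\xi}f$ on level sets together with $\mu_z$ being a probability measure, self-adjointness from the symmetric expression $\int_{\xi(\X)}\Gamma(z)\,\widehat{P_{\xi}f}(z)\,\widehat{P_{\xi}g}(z)\,dz$ obtained via~\eqref{eq:splitintPxi}, and non-expansiveness from orthogonality plus Pythagoras. The only addition is your up-front well-definedness check via Cauchy--Schwarz on $\mu_z$, which the paper omits but which is a sound (and slightly more careful) refinement rather than a different argument.
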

\begin{proof}
See Appendix~\ref{app:Pxi}.
\end{proof}

We use the coordinate projection to describe the dynamics-induced propagation of reduced distributions with respect to the variable~$ \xi $. To this end, we define the \emph{effective transfer operator} $\mathcal{T}_{\xi}^t \colon L^2_{\mu}(\X) \to L^2_{\mu}(\X)$ by
\begin{equation}
    \mathcal{T}_\xi^t = P_\xi\mathcal{T}^t P_\xi.
\end{equation}
We immediately obtain from the self-adjointness of $ \mathcal{T}^t $ (see Section~\ref{sec:Transfer operators and their properties}) and Proposition~\ref{prop:PxiProperties} (b) that~$\mathcal{T}_{\xi}^t$ is a self-adjoint operator on $L^2_{\mu}(\X)$. Moreover, $\|\mathcal{T}^t\|_{L^2_{\mu}} \le 1$ and Proposition~\ref{prop:PxiProperties}~(c) imply that $\|\mathcal{T}_{\xi}^t\|_{L^2_{\mu}}\le 1$. Thus, the spectrum of the effective transfer operator lies in the interval $[-1,1]$, too.

Returning to the purpose of these constructions, we call $ \xi $ a \emph{good reaction coordinate}~if
\begin{equation} \label{eq:EffectiveApprox}
    \sigma_\textrm{dom}(\mathcal{T}^t) \approx \sigma_\textrm{dom}(\mathcal{T}_{\xi}^t).
\end{equation}
While the previously introduced Markov state model $ T^t $ obtained by the Galerkin projection was approximating the dominant spectrum of the original transfer operator by a finite-dimensional operator (i.e.\ a matrix), the effective transfer operator still acts on an infinite-dimensional space. The reduction lies in the fact that~$\mathcal{T}^t$ operates on functions over $ \X \subseteq \R^n $, but the effective transfer operator $\mathcal{T}_\xi^t$ operates \emph{essentially} on functions over $ \xi(\X) \subset \R^k $, although we embed those into~$\X$ through the level sets of~$\xi$.

As mentioned above, a Galerkin projection of the transfer operator onto its dominant eigenfunctions is a perfect MSM. In the same vein, we ask here how we can characterize a good reaction coordinate. We can make use of the following general result.
\begin{lemma} \label{lem:EVapprox}
Let~$\mathbb{H}$ be a Hilbert space with scalar product~$\langle\cdot,\cdot\rangle$, and associated norm~${\|\cdot\|}$, let~$Q:\mathbb{H}\to\mathbb{H}$ be some orthogonal projection on a linear subspace of~$\mathbb{H}$, with~$Q^{\perp} = \mathrm{Id}-Q$. Let~$T:\mathbb{H}\to\mathbb{H}$ be a self-adjoint non-expansive linear operator, and~$u$ with~$\|u\|=1$ its eigenvector, i.e.,~$Tu = \lambda u$ for some~$\lambda\in\mathbb{R}$. If~$\|Q^{\perp}u\|<\varepsilon$, then~$T_Q:=QTQ$ has an eigenvalue~$\lambda_Q\in\mathbb{R}$ with~$|\lambda - \lambda_Q|<\varepsilon/\sqrt{1-\varepsilon^2}$.
\end{lemma}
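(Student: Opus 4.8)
The statement is a perturbation bound relating the eigenvalue $\lambda$ of $T$ to the spectrum of the compressed operator $T_Q = QTQ$. My plan is to exhibit an approximate eigenvector for $T_Q$ in the range of $Q$ and then invoke a standard spectral estimate for self-adjoint operators: if $v$ is a unit vector with $\|(T_Q - \lambda)v\| \le \delta$, then $T_Q$ has a point in its spectrum within distance $\delta$ of $\lambda$; since $T_Q$ is self-adjoint and (being a compression of a non-expansive operator) has spectrum in $[-1,1]$, and since here $|\lambda|\le 1$ too, this spectral point will actually be an eigenvalue if the spectrum is discrete, but in general the cleanest route is to bound $\operatorname{dist}(\lambda, \sigma(T_Q))$ and note that what we need is an eigenvalue — so I will present it for the discrete-spectrum case consistent with the paper's standing assumption, or simply phrase the conclusion as membership in the spectrum and remark that discreteness upgrades it to an eigenvalue.

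The natural candidate approximate eigenvector is $v = Qu / \|Qu\|$. First I would record that $\|Q^\perp u\| < \varepsilon$ forces $\|Qu\|^2 = 1 - \|Q^\perp u\|^2 > 1 - \varepsilon^2 > 0$, so $v$ is well-defined. Then I would estimate $\|(T_Q - \lambda)Qu\| = \|QTQu - \lambda Qu\|$. Writing $Qu = u - Q^\perp u$ and using $Tu = \lambda u$, we get $QTQu = QTu - QTQ^\perp u = \lambda Qu - QTQ^\perp u$, hence $(T_Q-\lambda)Qu = -QTQ^\perp u$. Since $Q$ is an orthogonal projection (norm $\le 1$) and $T$ is non-expansive, $\|QTQ^\perp u\| \le \|Q^\perp u\| < \varepsilon$. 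Dividing by $\|Qu\| > \sqrt{1-\varepsilon^2}$ gives $\|(T_Q - \lambda)v\| < \varepsilon/\sqrt{1-\varepsilon^2}$.

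To finish, I would apply the elementary fact that for a self-adjoint operator $S$ on a Hilbert space and a unit vector $v$, $\operatorname{dist}(\lambda,\sigma(S)) \le \|(S-\lambda)v\|$ (this follows from the spectral theorem: $\|(S-\lambda)v\|^2 = \int (t-\lambda)^2\, d\langle E_t v, v\rangle \ge \operatorname{dist}(\lambda,\sigma(S))^2$). With $S = T_Q$ this yields $\operatorname{dist}(\lambda, \sigma(T_Q)) < \varepsilon/\sqrt{1-\varepsilon^2}$, and under the discreteness assumption on the relevant part of the spectrum the nearest spectral point is an eigenvalue $\lambda_Q$, giving $|\lambda - \lambda_Q| < \varepsilon/\sqrt{1-\varepsilon^2}$.

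**Main obstacle.** The only real subtlety is the last step: turning "a point of the spectrum is close to $\lambda$" into "an *eigenvalue* is close to $\lambda$." For a compression $T_Q$ of a self-adjoint operator with discrete spectrum this is fine, but in full generality $T_Q$ need not have discrete spectrum near $\lambda$, so the cleanest honest statement is in terms of $\sigma(T_Q)$ rather than eigenvalues — I expect the paper either works under the discrete-spectrum convention already in force or intends $\lambda_Q$ to denote an element of the spectrum. Everything else (well-definedness of $v$, the algebraic identity $(T_Q-\lambda)Qu = -QTQ^\perp u$, the norm bounds) is routine.
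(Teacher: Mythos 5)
Your proof is correct and follows essentially the same route as the paper's: both take $v = Qu/\|Qu\|$ as an approximate eigenvector, derive $(T_Q-\lambda)Qu = -QTQ^{\perp}u$, bound its norm by $\varepsilon$, use $\|Qu\|>\sqrt{1-\varepsilon^2}$, and then convert the residual bound into a spectral statement (the paper cites pseudospectra theory for normal operators where you use the spectral-theorem distance bound -- the same underlying fact). Your caveat that in general one only gets $\operatorname{dist}(\lambda,\sigma(T_Q))<\varepsilon/\sqrt{1-\varepsilon^2}$ rather than a genuine eigenvalue is fair and is a point the paper's own proof passes over silently.
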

\begin{proof}
Using $ Q = \mathrm{Id} - Q^{\perp} $, we have
\begin{equation*}
    T_Q Q u
    = Q T \underbrace{QQ}_{=Q} u
    = Q T u - \underbrace{Q T Q^{\perp} u}_{=:-\zeta}
    = \lambda Q u + \zeta,
\end{equation*}
where $\|\zeta\|\le \|Q^{\perp} u\| < \varepsilon$ since~$Q$ and~$T$ are non-expanding. Thus,~$u' := Q u/\|Qu\|$ satisfies~$T_Q u' = \lambda u' + \zeta/\|Qu\|$, and the orthogonality of~$Q$ gives~$\|Qu\|>\sqrt{1-\varepsilon^2}$. Now, any orthogonal projection is self-adjoint, as is shown in the proof of Proposition~\ref{prop:PxiProperties}, hence the operator~$QTQ$ is self-adjoint, too, and thus normal. From the theory of pseudospectra for normal operators~\cite[Theorems 2.1,~2.2, and \S 4]{TrEm05}, we know that if~$ \|T_Q u' - \lambda u'\| < \varepsilon/\sqrt{1-\varepsilon^2} $, then~$ T_Q $ has an eigenvalue~$\lambda_Q\in\mathbb{R}$ in the~$\varepsilon/\sqrt{1-\varepsilon^2}$-neighborhood of~$\lambda$.
\end{proof}
With~$\mathbb{H}=L^2_{\mu}$,~$Q=P_{\xi}$, and~$T = \mathcal{T}^t$ we immediately obtain the following result.
\begin{corollary} \label{cor:characRC}
As before, let $ \lambda_i^t $ and $ \varphi_i $, $ i = 0, \dots, d $, denote the dominant eigenvalues and eigenfunctions of $ \mathcal{T}^t $, respectively. For any given $ i $, if $ \|P_{\xi}^{\perp}\varphi_i\|_{L^2_\mu} < \varepsilon $, then there is an eigenvalue~$\tilde{\lambda}_i^t$ of~$ \mathcal{T}^t_\xi $ with $ |\lambda_i^t - \tilde{\lambda}_i^t| < \varepsilon/\sqrt{1-\varepsilon^2} $.
\end{corollary}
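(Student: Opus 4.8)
The plan is to apply Lemma~\ref{lem:EVapprox} essentially verbatim, with the identifications~$\mathbb{H} = L^2_{\mu}(\X)$,~$Q = P_{\xi}$, and~$T = \mathcal{T}^t$, so that the whole argument reduces to checking that the hypotheses of that lemma hold in the present setting and that the conclusion translates back into the language of the effective transfer operator.

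First I would verify that~$Q = P_{\xi}$ is an orthogonal projection onto a linear subspace of~$\mathbb{H}$: this is exactly the content of Proposition~\ref{prop:PxiProperties}, parts~(a) and~(c) (linear, idempotent, orthogonal, hence non-expansive), with part~(b) recording self-adjointness as well. Next I would recall from Section~\ref{sec:Transfer operators and their properties} that, under the standing reversibility assumption,~$\mathcal{T}^t$ is self-adjoint and non-expansive on~$L^2_{\mu}(\X)$, so~$T$ meets the structural requirements of the lemma. The vector~$u$ of the lemma is taken to be~$\varphi_i$; since the dominant eigenfunctions were normalized so that~$\innerprod{\varphi_i}{\varphi_j}_{\mu} = \delta_{ij}$, we automatically have~$\|\varphi_i\|_{L^2_{\mu}} = 1$ and~$\mathcal{T}^t\varphi_i = \lambda_i^t\varphi_i$ with~$\lambda_i^t\in\mathbb{R}$, as required. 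Finally, the smallness hypothesis~$\|Q^{\perp}u\|<\varepsilon$ is precisely the assumed bound~$\|P_{\xi}^{\perp}\varphi_i\|_{L^2_{\mu}} < \varepsilon$.

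With the hypotheses in place, Lemma~\ref{lem:EVapprox} gives a real eigenvalue~$\lambda_Q$ of~$T_Q := QTQ$ with~$|\lambda_i^t - \lambda_Q| < \varepsilon/\sqrt{1-\varepsilon^2}$. It then only remains to observe that~$T_Q = P_{\xi}\mathcal{T}^t P_{\xi} = \mathcal{T}_{\xi}^t$, by the very definition of the effective transfer operator, so setting~$\tilde{\lambda}_i^t := \lambda_Q$ yields the claim. Because~$i$ was arbitrary among~$0,\dots,d$, one may equally apply the argument to each dominant eigenfunction in turn.

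I do not expect a genuine obstacle here — the statement is a corollary in the literal sense. The only points deserving a moment's care are bookkeeping ones: confirming that the normalization convention for the~$\varphi_i$ is the~$L^2_{\mu}$-normalization (so that~$\|u\|=1$ in the lemma holds without rescaling), and checking that the ambient Hilbert space and inner product appearing in Proposition~\ref{prop:PxiProperties} and in the self-adjointness of~$\mathcal{T}^t$ are the same~$L^2_{\mu}(\X)$ and~$\innerprod{\cdot}{\cdot}_{\mu}$ used when invoking the lemma, which indeed they are.
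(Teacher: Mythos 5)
Your proposal is correct and follows exactly the paper's route: the corollary is obtained by instantiating Lemma~\ref{lem:EVapprox} with $\mathbb{H}=L^2_{\mu}$, $Q=P_{\xi}$, $T=\mathcal{T}^t$, and $u=\varphi_i$, which is precisely what the paper does (it states the identification immediately before the corollary without further elaboration). Your additional bookkeeping --- checking the hypotheses via Proposition~\ref{prop:PxiProperties}, the self-adjointness and non-expansiveness of $\mathcal{T}^t$, and the normalization $\|\varphi_i\|_{L^2_{\mu}}=1$ --- is exactly the implicit content the paper leaves to the reader.
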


Corollary~\ref{cor:characRC} implies that if the projection error of \emph{all dominant} eigenfunctions is small, then $ \xi $ is a good reaction coordinate in the sense of~\eqref{eq:EffectiveApprox}. Very similar results are available for approximation of the eigenvalues of the infinitesimal generator of the Fokker--Planck equation associated with the transfer operator if the dynamical system under consideration is continuous in time \cite{ZS17}. 

Under which conditions is the projection error small? Let us consider the case where there are $ \tilde{\varphi}_i:\R^k \to \R $, $ i = 1, \dots, d $, such that $ \varphi_i(x) = \tilde{\varphi}_i(\xi(x)) $. We then say that \emph{$\varphi_i$ is a function of~$\xi$} or that \emph{$\xi$ parametrizes~$\varphi_i$}. If $\xi$ parametrizes~$\varphi_i$ perfectly, the projection error obviously vanishes. Thus, trivially, by choosing $ \xi = \varphi = (\varphi_1, \dots, \varphi_d)^{\intercal} $, we obtain a perfect reaction coordinate since with $\tilde{\varphi}_i(z):= z_i $ with $ \varphi_i = \tilde{\varphi}_i \circ \xi $. However, the eigenfunctions are \emph{global} objects, i.e., their computation is prohibitive in high dimensions. Since we are aiming at computing a reaction coordinate, we have to answer the question of whether there is a reaction coordinate $\xi$ that can be evaluated based on local computations only while it parametrizes the dominant eigenfunctions of~$\mathcal{T}^t$ well enough such that it leads to a small projection error. 
We will see next that this question can be answered by utilizing a common property of most metastable systems: The transitions between the metastable sets happen along so-called \emph{reaction pathways}, which imply the existence of \emph{transition manifolds} in the space of transition densities. A \emph{suitable} parametrization of this manifold results in a parametrization of the dominant eigenfunctions with a small error.

\section{Identifying good reaction coordinates}
\label{sec:identifying RC}

The goal is now to find a reaction coordinate $\xi$ that is as low-dimensional as possible and results in a good projected transfer operator in the sense of \eqref{eq:EffectiveApprox}. As we saw in the previous section, the condition $\|P_\xi^\perp \varphi_i\|_{L^2_{\mu}} \approx 0$ is sufficient. Thus, the idea to numerically seek $\xi$ that parametrizes the dominant eigenfunctions of $\mathcal{T}^t$ in the $ \| \cdot \|_{L^2_{\mu}}$-norm seems natural since this would lead to small projection error $ \|P_\xi^\perp\varphi_i\|_{L^2_{\mu}}$. 

In fact, eigenfunctions of transfer operators have been used before to compute reduced dynamics and reaction coordinates: In \cite{FGH14a}, methods to decompose multiscale systems into fast and slow processes and to project the dynamics onto these subprocesses based on eigenfunctions of the Koopman operator $ \mathcal{K}^t $ are proposed. In \cite{MHP17}, the dominant eigenfunctions of the transfer operator $ \mathcal{T}^t $, which due to the assumed reversibility of the system is identical to $ \mathcal{K}^t $, are shown to be good reaction coordinates. Also, commitor functions (introduced in Appendix~\ref{app:RCexist}), which are closely related to the dominant eigenfunctions, have been used as reaction coordinates in~\cite{Du98,LuVE14}.

However, we propose a fundamentally different path in defining and finding reaction coordinates, as working with dominant eigenfunctions has two major disadvantages:
\begin{enumerate}
\item The eigenproblem is \emph{global}. Thus if we wish to learn the value of an eigenfunction $\varphi_i$ at only one location $x\in\mathbb{X}$, we need an approximation of the transfer operator $\mathcal{T}_t$ that has to be accurate on all of $\mathbb{X}$. The computational effort to construct such an approximation grows exponential with $\dim(\mathbb{X})$, this is  the \emph{curse of dimensionality}. There have been attempts to mitigate this \cite{Web06,JuKo09,Web12}, but we aim to circumvent this problem entirely. Given two points $x,y \in \mathbb{X}$, we will decide whether $\xi(x)$ is close to $\xi(y)$ or not by using only local computations around $x$ and $y$ (i.e. samples from the transition densities $p^t(x,\cdot)$ and $p^t(y,\cdot)$ for moderate $t$). 
\item The number of dominant eigenfunctions $ (d + 1) $ equals the number of meta\-stable states, and this number can be much larger than the dimension of the transition manifold. This fact is illustrated in Example~\ref{ex:LemonSlice} below.
\end{enumerate}

\begin{example} \label{ex:LemonSlice}
Let us consider a diffusion process of the form \eqref{eq:overdampedLangevin} with the circular multi-well potential shown in Figure~\ref{fig:minimalreactioncoordinate}. Choosing a temperature that is not high enough for the central potential barrier to be overcome easily, transitions between the wells typically happen in the vicinity of a one-dimensional reaction pathway, the unit circle. The number of dominant eigenfunctions, however, corresponds to the number of wells. Nevertheless, projecting the system onto the unit circle would retain the dominant time scales of the system, cf.\ Section~\ref{sec:Numerical Examples}. \exampleSymbol

\begin{figure}[htb]
    \centering
    \begin{minipage}{0.45\textwidth}
        \centering
        \subfiguretitle{a)}
        \includegraphics[width=0.9\textwidth]{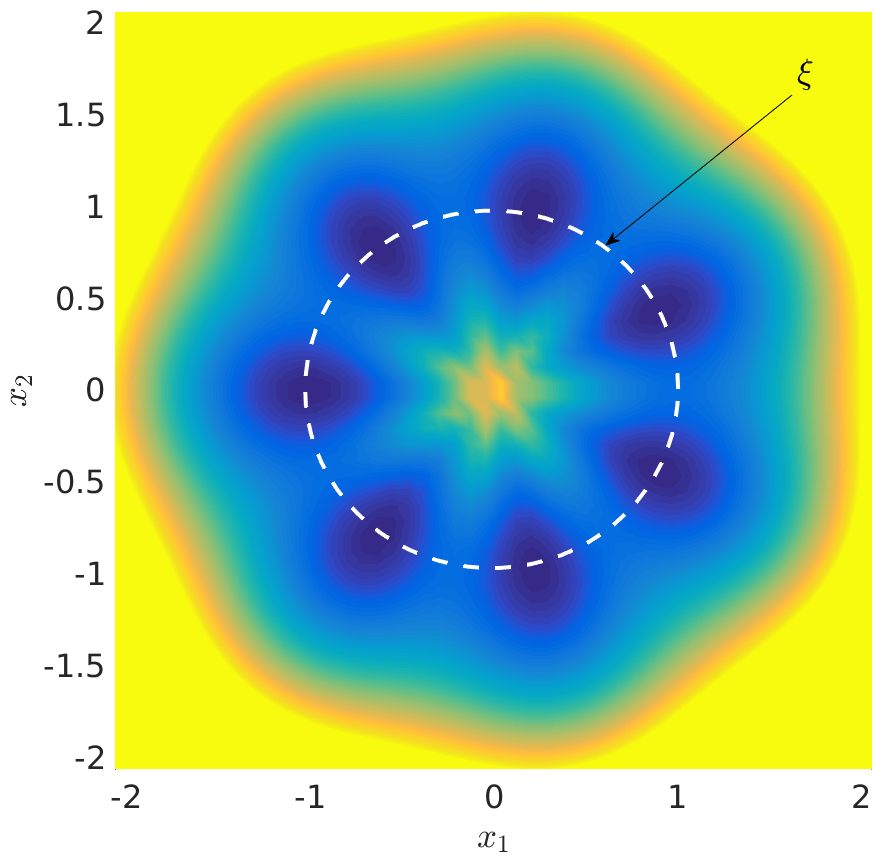}
    \end{minipage}
    \begin{minipage}{0.45\textwidth}
        \centering
        \subfiguretitle{b)}
        \includegraphics[width=0.9\textwidth]{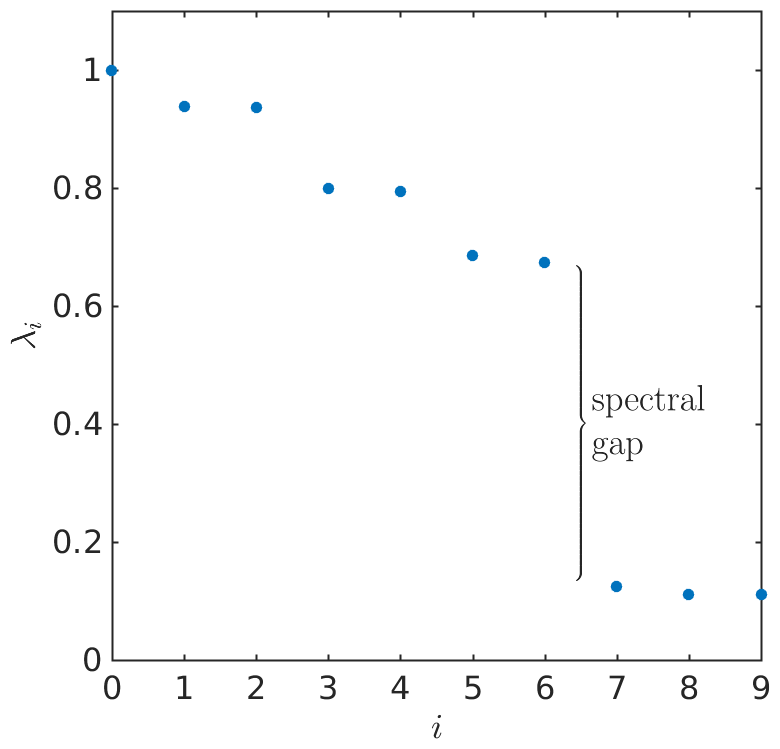}
    \end{minipage}
    \caption{a) Potential with seven wells and thus seven dominant eigenvalues, but only a one-dimensional reaction coordinate. The reaction pathway is marked by a dashed white line. b) Dominant eigenvalues of $ \mathcal{T}^t $ for $ t = 0.1 $ and $\beta=0.5$. The spectral gap is clearly visible.}
    \label{fig:minimalreactioncoordinate}
\end{figure}

\end{example}

\subsection{Parametrization of dominant eigenfunctions}

If the~$ (d+1) $ dominant eigenfunctions do not depend fully on the phase space $ \X $, a lower-dimensional and ultimately easier to find reaction coordinate suffices for keeping the eigenvalue approximation error \eqref{eq:EffectiveApprox} small. It is easy to see that if there exists a function $ \xi \colon \X \rightarrow \R^k $ for some $ k $ so that the eigenfunctions $ \varphi $ are constant on the level sets of $ \xi $, i.e., there exist functions $ \tilde{\varphi}_i \colon \R^k \rightarrow \R $, $ i = 1, \dots, d $ such that $ \varphi_i = \tilde{\varphi}_i \circ \xi $, then the projection error $ \|P_\xi^\perp \varphi_i \|_{L^2_{\mu}} $ is zero. A quantitative generalization of this is the statement that if the eigenfunctions $ \varphi_i $ are \emph{almost constant} on level sets of a~$ \xi $, then the projection error is small.

\begin{lemma}\label{lem:parametrizableeigenfunctions1}
Assume that there exists a function $ \xi \colon \X \rightarrow \R^k $ for some $ k $ and functions $ \tilde{\varphi}_i \colon \R^k \rightarrow \R $, $ i = 1, \dots, d $, with
\begin{equation} \label{eq:almostconstanteigenfunction}
    |\varphi_i(x) - \tilde{\varphi}_i(\xi(x))| \leq \varepsilon \quad \forall~x\in\X.
\end{equation}
Then $ \|P_\xi^\perp\varphi_i \|_{L^2_{\mu}} \leq  2\varepsilon $.
\end{lemma}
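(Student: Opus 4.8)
The plan is to exploit the fact that $P_\xi$ is an orthogonal projection onto the space of functions constant on level sets of $\xi$, together with the pointwise bound \eqref{eq:almostconstanteigenfunction}. The key observation is that the function $g_i(x) := \tilde\varphi_i(\xi(x))$ lies in the range of $P_\xi$, since it is by construction constant on each level set $\mathbb{L}_z$. Therefore $P_\xi g_i = g_i$, and by the orthogonality (hence minimality property) of the projection, $\|P_\xi^\perp \varphi_i\|_{L^2_\mu} = \|\varphi_i - P_\xi\varphi_i\|_{L^2_\mu} \le \|\varphi_i - g_i\|_{L^2_\mu}$, because $P_\xi\varphi_i$ is the best $L^2_\mu$-approximation to $\varphi_i$ within the range of $P_\xi$.

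The second step is to bound $\|\varphi_i - g_i\|_{L^2_\mu}$ using \eqref{eq:almostconstanteigenfunction}. Since $|\varphi_i(x) - \tilde\varphi_i(\xi(x))| \le \varepsilon$ pointwise on $\X$, we have $\|\varphi_i - g_i\|_{L^2_\mu}^2 = \int_\X |\varphi_i(x) - \tilde\varphi_i(\xi(x))|^2\,d\mu(x) \le \varepsilon^2$, using that $\mu$ is a probability measure. Hence $\|\varphi_i - g_i\|_{L^2_\mu} \le \varepsilon$, which combined with the first step already yields $\|P_\xi^\perp\varphi_i\|_{L^2_\mu} \le \varepsilon$ — in fact a slightly stronger bound than the claimed $2\varepsilon$. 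The factor of $2$ in the statement presumably comes from a less sharp argument, e.g.\ writing $\|P_\xi^\perp\varphi_i\| = \|P_\xi^\perp(\varphi_i - g_i)\| + 0$ via $P_\xi^\perp g_i = 0$ and then using non-expansiveness of $P_\xi^\perp$, which also gives $\le \varepsilon$; a triangle-inequality route $\|\varphi_i - P_\xi\varphi_i\| \le \|\varphi_i - g_i\| + \|g_i - P_\xi\varphi_i\| \le \|\varphi_i - g_i\| + \|P_\xi(g_i - \varphi_i)\| \le 2\varepsilon$ reproduces exactly the stated constant. I would present the sharp version and note the $2\varepsilon$ bound follows a fortiori.

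Concretely, the proof I would write is: by Proposition~\ref{prop:PxiProperties}, $P_\xi$ is an orthogonal projection on $L^2_\mu(\X)$; since $\tilde\varphi_i\circ\xi$ is constant on level sets of $\xi$ it lies in the range of $P_\xi$, so $P_\xi^\perp(\tilde\varphi_i\circ\xi) = 0$. Then
\begin{equation*}
    \|P_\xi^\perp\varphi_i\|_{L^2_\mu}
    = \|P_\xi^\perp(\varphi_i - \tilde\varphi_i\circ\xi)\|_{L^2_\mu}
    \le \|\varphi_i - \tilde\varphi_i\circ\xi\|_{L^2_\mu}
    \le \varepsilon \le 2\varepsilon,
\end{equation*}
where the first inequality uses that $P_\xi^\perp$ is non-expansive (Proposition~\ref{prop:PxiProperties}(c)) and the second uses \eqref{eq:almostconstanteigenfunction} together with $\mu(\X) = 1$.

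There is essentially no hard part here — the only thing to be careful about is confirming that $\tilde\varphi_i\circ\xi$ genuinely belongs to $L^2_\mu(\X)$ and to the range of $P_\xi$, i.e.\ that $P_\xi$ acts as the identity on functions of the form $h\circ\xi$. This is immediate from the definition of $P_\xi$ as conditional expectation given $\xi$: for $h\circ\xi \in L^2_\mu$, $P_\xi(h\circ\xi)(x) = \mathsf{E}_\mu[h(\xi(\mathbf{x}'))\mid\xi(\mathbf{x}')=\xi(x)] = h(\xi(x))$. One should also implicitly assume $\tilde\varphi_i\circ\xi \in L^2_\mu$, which is automatic from \eqref{eq:almostconstanteigenfunction} since $\varphi_i\in L^2_\mu$ and the difference is bounded. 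So the "obstacle" is purely bookkeeping; the mathematical content is the best-approximation property of orthogonal projections applied to the obvious candidate approximant.
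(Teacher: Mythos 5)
Your proof is correct, and it in fact sharpens the lemma: the bound $\varepsilon$ (rather than $2\varepsilon$) is valid. The route differs slightly from the paper's. Both proofs hinge on the same candidate approximant $\tilde\varphi_i\circ\xi$, which is fixed by $P_\xi$ because it is constant on level sets and $\mu_z$ is a probability measure. But the paper then computes $P_\xi\varphi_i$ explicitly, writing $\varphi_i = \tilde\varphi_i\circ\xi + \varepsilon c_i$ with $|c_i|\le 1$, obtaining $P_\xi\varphi_i = \tilde\varphi_i\circ\xi + \varepsilon\int_{\mathbb{L}_{\xi(\cdot)}}c_i\,d\mu_{\xi(\cdot)}$, and finishes with the triangle inequality $\|P_\xi\varphi_i-\varphi_i\|\le\|P_\xi\varphi_i-\tilde\varphi_i\circ\xi\|+\|\tilde\varphi_i\circ\xi-\varphi_i\|\le 2\varepsilon$, each summand being bounded pointwise by $\varepsilon$. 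You instead invoke the structural facts $P_\xi^\perp(\tilde\varphi_i\circ\xi)=0$ and the non-expansiveness of $P_\xi^\perp$ (Proposition~\ref{prop:PxiProperties}(c)) to get $\|P_\xi^\perp\varphi_i\|=\|P_\xi^\perp(\varphi_i-\tilde\varphi_i\circ\xi)\|\le\|\varphi_i-\tilde\varphi_i\circ\xi\|\le\varepsilon$ in one step. What your version buys is the sharper constant and a cleaner argument; what the paper's version buys is that it only uses the averaging structure of $P_\xi$ (it would survive even if $P_\xi$ were not orthogonal, e.g.\ a mere $L^1$-type conditional expectation), and it is the pointwise computation that the subsequent Remark exploits to weaken hypothesis~\eqref{eq:almostconstanteigenfunction} to an averaged condition on each level set --- a weakening your contraction argument also supports, since the averaged condition still gives $\|\varphi_i-\tilde\varphi_i\circ\xi\|_{L^1_{\mu_z}}\le\varepsilon$ on each fiber, though one must then redo the $L^2$ bookkeeping. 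Your side remarks (membership of $\tilde\varphi_i\circ\xi$ in $L^2_\mu$, and $P_\xi(h\circ\xi)=h\circ\xi$) correctly dispose of the only points needing care.
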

\begin{proof}
Assuming~\eqref{eq:almostconstanteigenfunction} holds, there exists a function $ c_i \colon \R \rightarrow \R $ with $ c_i(x)\leq 1~\forall x\in\X $ so that
\begin{equation*}
    \varphi_i(x) = \tilde{\varphi}_i(\xi(x)) + c_i(x)\varepsilon\,.
\end{equation*}
Thus, we have
\begin{align*}
    P_\xi\varphi_i(x) &= \int_{\mathbb{L}_{\xi(x)}} \Big(\tilde{\varphi}_i\big(\xi(x')\big)+c_i(x')\varepsilon\Big) d\mu_{\xi(x)}(x') \\
                      &=\tilde{\varphi}_i\big(\xi(x)\big) + \varepsilon\int_{\mathbb{L}_{\xi(x)}} c_i(x')d\mu_{\xi(x)}(x').
\end{align*}
For the projection error, we then obtain
\begin{align*}
    \|P_\xi \varphi_i - \varphi_i\|_{L^2_{\mu}} &\leq \|P_\xi\varphi_i - \tilde{\varphi_i}\circ\xi\|_{L^2_{\mu}} + \|\tilde{\varphi_i}\circ\xi - \varphi_i\|_{L^2_{\mu}} \\
                                    &\leq 2\varepsilon. \qedhere
\end{align*}
\end{proof}

\begin{remark}
From the proof we see that the pointwise condition (\ref{eq:almostconstanteigenfunction}) can be replaced by the much weaker condition
\[
\int_{\mathbb{L}_{z}} \left|\varphi_i(x') - \tilde{\varphi}_i(\xi(x'))\right| d\mu_{z}(x') \leq \varepsilon,
\]
for all level sets~$\mathbb{L}_z$ of $\xi$.
\end{remark}

From here on, we address the following two central questions:
\begin{itemize}
\item[(Q1)] \emph{In which dynamical situations can we expect to find low-dimensional reaction coordinates?}
\item[(Q2)] \emph{How can we computationally exploit the properties of the dynamics to obtain reaction coordinates?}
\end{itemize}
Let us start with the first question. We will address the second question in Section~\ref{ssec:Whitney} and Section~\ref{sec:NumApprox}. Experience shows~\cite{ERVE02,RVEME05,towards_tpt2006,SchuetteSarich2013} that transitions between metastable states tend to happen along so-called \emph{reaction pathways}, which is the low-dimensional dynamical backbone in the high-dimensional state space, connecting the metastable states via saddle points of the potential $V$~\cite{WF}.

From now on, we observe the system at an intermediate time scale $t_\text{slow} \gg t \gg t_\text{fast}$ (where~$t_\text{slow}$ and $t_\text{fast}$ are the implied time scales $t_d,~t_{d+1}$ from Section \ref{ssec:implied_ts}) and thus assume that the process $\mathbf{X}_t$ has already left the transition region (if it started there), equilibrated to a quasi-stationary distribution inside some metastable wells, but has not had enough time to equilibrate \emph{globally}.
At this time scale, starting in some~$x\in\X$, the transition density~$p^t(x,\cdot)$ is observed to approximately depend only on progress along these reaction paths; see Figure \ref{fig:reactionmanifold} for an illustration. 
This means that the density $p^t(x,\cdot)$ on the fiber perpendicular to the transition pathway is approximately the same as $p^t(x^*,\cdot)$ for some $x^*$ \emph{on} the transition pathway. As this pathway is low-dimensional, this means that the image $\overline{\mathcal{Q}}(\X)$ of the map 
$$
\overline{\mathcal{Q}}(x):= p^t(x,\cdot)
$$
is almost a low-dimensional manifold in $L^1(\mathbb{X})$.

The existence of this low-dimensional structure in the space of probability densities is exactly the assumption we need to ensure that the dominant eigenfunctions are low-dimensionally parametrizable, and thus that a low-dimensional reaction coordinate $\xi$ exists. This assumption is made precise in Definition \ref{def:reducibility}.
To summarize, we will see that~$\xi$ is a good reaction coordinate if~$p^t(x,\cdot) \approx p^t(y,\cdot)$ for~$\xi(x) = \xi(y)$. 

\begin{figure}[htb]
    \centering
    \includegraphics[width=.9\textwidth]{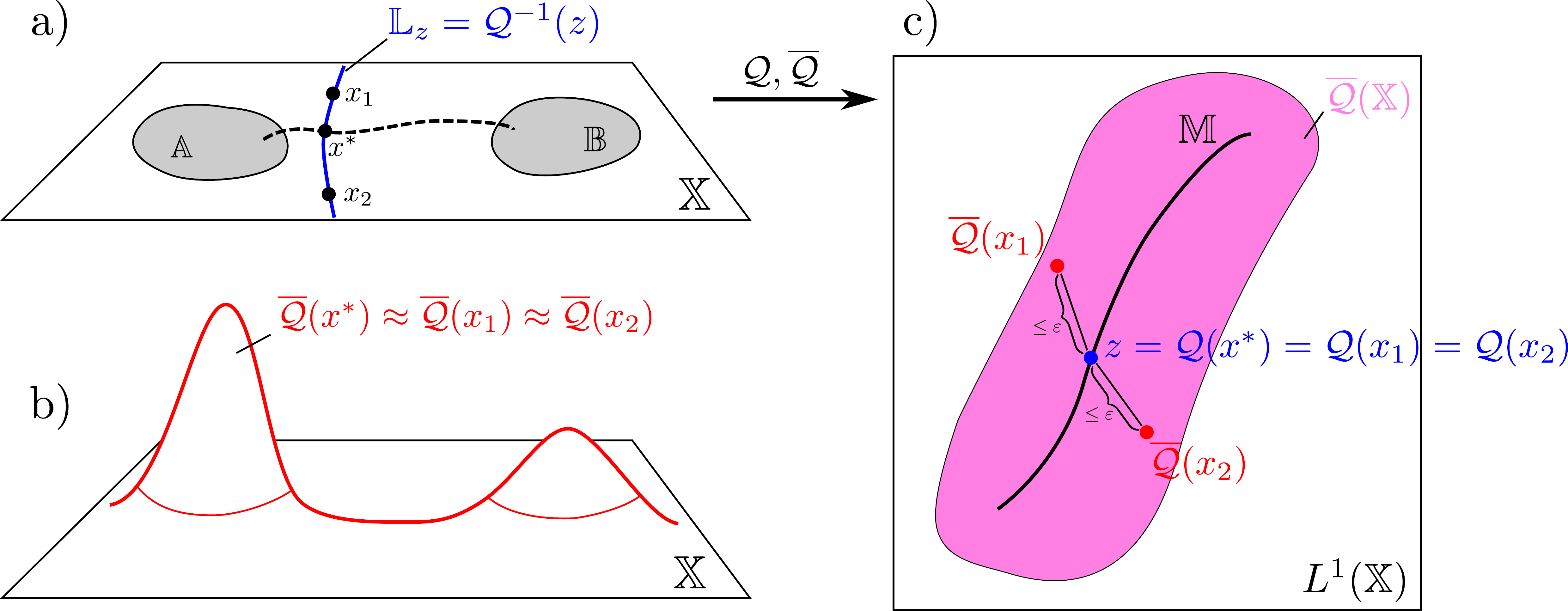}
    \caption{a) and b) The transition densities $\overline{\mathcal{Q}}(x_1)$ and $\overline{\mathcal{Q}}(x_2)$ are ``similar'' to $\overline{\mathcal{Q}}(x^*)$ for some $x^*$ on the transition path (dashed line) that connects the metastable sets $\mathbb{A}$ and~$\mathbb{B}$. c) The mapping~$\mathcal{Q}$ can be thought of as mapping all points that are ``similar'' under~$\overline{\mathcal{Q}}$ to the same point in~$L^1(\mathbb{X})$. The image of~$\mathcal{Q}$ thus forms a $r$-dimensional manifold in~$L^1(\mathbb{X})$.}
    \label{fig:reactionmanifold}
\end{figure}

\begin{definition}\label{def:reducibility}[$(\varepsilon,r)$-reducibility and transition manifold]\label{def:reducibleprocess}
We call the process $\mathbf{X}_t$ \emph{$(\varepsilon,r)$-reducible}, if there exists a smooth closed $r$-dimensional manifold~$ \mathbb{M} \subset L^2_{1/\mu} \subset L^1(\X)$ such that for~$t_\text{fast}\ll t \ll t_\text{slow}$ and all $x\in\X$
\begin{equation} \label{eq:reacMF}
   \min_{f\in\mathbb{M}} \|f - p^t(x,\cdot)\|_{L^2_{1/\mu}} \leq \varepsilon
\end{equation}
holds. We call~$\mathbb{M}$ the \emph{transition manifold} and the map~$\mathcal{Q} \colon \X \to \mathbb{M}$,
\begin{equation}
    \mathcal{Q}(x) := \mathrm{arg}\min_{f\in\mathbb{M}}\|p^t(x,\cdot) - f\|_{L^2_{1/\mu}}
\label{eq:firstDefQ}
\end{equation}
the \emph{mapping onto the transition manifold}. We can set~$\mathbb{M} = \mathrm{cl}(\mathcal{Q}(\X))$, where~$ \mathrm{cl}(\mathbb{Y})$ denotes the closure of the set~$\mathbb{Y}$.\footnote{If it is necessary to break ties in~\eqref{eq:firstDefQ}, we can do so by taking any of the minimizers. The mapping~$x\mapsto p^t(x,\cdot)$ can be shown to be smooth~\cite[Theorem C.1]{BiKoJu15}, hence~$\mathcal{Q}(\X)$ is a smooth manifold satisfying~\eqref{eq:reacMF}.}
\end{definition}

\begin{remark}
While it is natural to motivate $(\varepsilon,r)$-reducibility by the existence of reaction pathways in phase space, it is not strictly necessary. There exist stochastic systems without low-dimensional reaction pathways whose densities still quickly converge to a transition manifold in $L^1$. Future work includes the  identification of necessary and sufficient conditions for the existence of transition manifolds (see the first point in the conclusions).
We also further elaborate on the connection between reaction pathways and transition manifolds in Appendix~\ref{app:RCexist}. 
\end{remark}

\begin{remark}
We recall from Section~\ref{sec:Transfer operators and their properties} that the Perron--Frobenius operator~$\mathcal{P}^t$ is also naturally defined on the space~$L^2_{1/\mu}$~\cite{SchCa92}. Further, with the Dirac distribution centered in~$x\in\X$, denoted by~$\delta_x$, we formally have~$p^t(x,\cdot) = \mathcal{P}^t\delta_x$. Hence, the choice of norm in Definition~\ref{def:reducibleprocess} is natural.
It should also be noted that since~$\mu$ is a probability measure, the H\"older inequality yields~$\|f\|_{L^1_{\mu}} \le \|f\|_{L^2_{\mu}}$. Using this we have
\[
\|f\|_{L^1} = \|f/\varrho\|_{L^1_{\mu}} \le \|f/\varrho\|_{L^2_{\mu}} = \|f\|_{L^2_{1/\mu}}\,,
\]
which shows that if~$p^t(x,\cdot)$ and~$p^t(y,\cdot)$ are close in the~$L^2_{1/\mu}$ norm, they are also close in the~$L^1$ norm. We require the closeness of the respective~$p^t(x,\cdot)$ in the~$L^2_{1/\mu}$ norm for our theoretical considerations below, but otherwise we will think of them as functions in~$L^1$.
\end{remark}
Note that we only need to evolve the system at hand for a moderate time~$t\ll t_\text{slow}$, which has to be merely sufficiently large to damp out the fast fluctuations in the metastable states. This will be an important point later, allowing for numerical tractability.

Next, we show that~$(\varepsilon,r)$-reducibility implies that dominant eigenfunctions are almost constant on the level sets of~$\mathcal{Q}$.

\begin{lemma}\label{lem:eigenfunctionsalmostconstant}
If $\mathbf{X}_t$ is $(\varepsilon,r)$-reducible, then for an eigenfunction~$\varphi_i$ of~$\mathcal{T}^t$ with~$\|\varphi_i\|_{L^2_{\mu}}=1$ and points~$x,y\in\X$ with $\mathcal{Q}(x) = \mathcal{Q}(y)$ we have
\begin{equation*}
    |\varphi_i(x) - \varphi_i(y)| \leq \frac{2\varepsilon}{|\lambda_i|}\,.
\end{equation*}
\end{lemma}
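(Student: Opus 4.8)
The plan is to exploit that, under the standing reversibility assumption, $\mathcal{T}^t$ coincides with the Koopman operator $\mathcal{K}^t$, which carries the explicit integral kernel $p^t$. Starting from the eigenvalue relation $\mathcal{T}^t\varphi_i = \lambda_i\varphi_i$ and using $\mathcal{T}^t = \mathcal{K}^t$, we have for every $x\in\X$ (working with the smooth representatives of $\varphi_i$ and of $x\mapsto p^t(x,\cdot)$, cf.\ the footnote to Definition~\ref{def:reducibleprocess})
\[
\lambda_i\,\varphi_i(x) = (\mathcal{K}^t\varphi_i)(x) = \int_\X p^t(x,z)\,\varphi_i(z)\,dz .
\]
Writing the same identity at $y$ and subtracting reduces the claim to estimating a single integral:
\[
\lambda_i\big(\varphi_i(x)-\varphi_i(y)\big) = \int_\X \big(p^t(x,z)-p^t(y,z)\big)\,\varphi_i(z)\,dz .
\]

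To bound the right-hand side I would apply the Cauchy--Schwarz inequality in the pairing that matches the norms involved, splitting the integrand as $\big(p^t(x,z)-p^t(y,z)\big)\varrho(z)^{-1/2}\cdot\varphi_i(z)\varrho(z)^{1/2}$, which yields
\[
\left|\int_\X \big(p^t(x,z)-p^t(y,z)\big)\varphi_i(z)\,dz\right| \le \big\|p^t(x,\cdot)-p^t(y,\cdot)\big\|_{L^2_{1/\mu}}\,\|\varphi_i\|_{L^2_\mu} = \big\|p^t(x,\cdot)-p^t(y,\cdot)\big\|_{L^2_{1/\mu}},
\]
using $\|\varphi_i\|_{L^2_\mu}=1$; this is precisely where the weighted space $L^2_{1/\mu}$ in Definition~\ref{def:reducibleprocess} is tailored to fit. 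Since $\mathcal{Q}(x)=\mathcal{Q}(y)=:f^\ast\in\mathbb{M}$, the triangle inequality through $f^\ast$ gives $\|p^t(x,\cdot)-p^t(y,\cdot)\|_{L^2_{1/\mu}} \le \|p^t(x,\cdot)-f^\ast\|_{L^2_{1/\mu}} + \|f^\ast-p^t(y,\cdot)\|_{L^2_{1/\mu}}$, and each summand equals $\min_{f\in\mathbb{M}}\|p^t(x,\cdot)-f\|_{L^2_{1/\mu}}$ (respectively for $y$), hence is $\le\varepsilon$ by $(\varepsilon,r)$-reducibility and the definition of $\mathcal{Q}$ as the minimizer. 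Combining the three estimates gives $|\lambda_i|\,|\varphi_i(x)-\varphi_i(y)|\le 2\varepsilon$, which is the assertion (the statement implicitly assumes $\lambda_i\neq 0$, as the bound is otherwise vacuous).

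There is no genuine obstacle here: once $\mathcal{T}^t=\mathcal{K}^t$ is invoked, the proof is essentially the two displays above. The only points requiring care are bookkeeping ones --- pairing in the correct weighted inner products so that both the normalization $\|\varphi_i\|_{L^2_\mu}=1$ and the $L^2_{1/\mu}$-bound on the densities from $(\varepsilon,r)$-reducibility are used, checking that $p^t(x,\cdot)\in L^2_{1/\mu}$ so the integrals converge (which is implicit, since $p^t(x,\cdot)$ is assumed $\varepsilon$-close to $\mathbb{M}\subset L^2_{1/\mu}$), and passing to continuous representatives if one wants the estimate to hold at the prescribed points $x,y$ rather than merely almost everywhere.
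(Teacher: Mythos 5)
Your proof is correct and follows essentially the same route as the paper's: invoke reversibility to write $\mathcal{T}^t=\mathcal{K}^t$ with kernel $p^t$, subtract the eigenvalue identities at $x$ and $y$, bound the resulting integral by Cauchy--Schwarz in the $L^2_{1/\mu}$--$L^2_\mu$ pairing, and control $\|p^t(x,\cdot)-p^t(y,\cdot)\|_{L^2_{1/\mu}}\le 2\varepsilon$ by the triangle inequality through $\mathcal{Q}(x)=\mathcal{Q}(y)$. The only cosmetic difference is that the paper introduces an explicit error function $e$ with $\|e\|_{L^2_{1/\mu}}\le 2\varepsilon$ and performs the triangle-inequality step first, whereas you defer it to the end.
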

\begin{proof}
First note that for the transition densities $p^t(x,\cdot),~p^t(y,\cdot)$ it holds that
\begin{equation} \label{eq:transdensitydifference}
    \begin{split}
    \|p^t(x,\cdot) - p^t(y,\cdot)\|_{L^2_{1/\mu}} &\leq \|p^t(x,\cdot) - \mathcal{Q}(x)\|_{L^2_{1/\mu}} + \|\mathcal{Q}(x) - p^t(y,\cdot)\|_{L^2_{1/\mu}}\\
                                    & = \|p^t(x,\cdot) - \mathcal{Q}(x)\|_{L^2_{1/\mu}} + \|\mathcal{Q}(y) - p^t(y,\cdot)\|_{L^2_{1/\mu}} \leq 2\varepsilon~.
    \end{split}
\end{equation}
With this we can show the assertion:
\begin{align*}
    \lambda_i\varphi_i(x) &= \mathcal{T}^t \varphi_i(x)
                           = \mathcal{K}^t \varphi_i(x)
                           = \int_\X\varphi_i(x')p^t(x,x') \, dx'.
    \intertext{Applying \eqref{eq:transdensitydifference}, for some function $e\in L^2_{1/\mu}(\X)$ with $\|e\|_{L^2_{1/\mu}}\leq 2\varepsilon$, we get}
\lambda_i\varphi_i(x)    &=\int_\X\varphi_i(x')\big(p^t(y,x')+e(x')\big)~dx'\\
    &=\int_\X\varphi_i(x')p^t(y,x') dx' + \int_\X\varphi_i(x')\frac{e(x')}{\varrho(x')}~d\mu(x')\\
    &=\lambda_i\varphi_i(y) + \int_\X\varphi_i(x')\frac{e(x')}{\varrho(x')} \, d\mu(x'),
\end{align*}
where in the last equation, we again used that due to reversibility $ \mathcal{K}^t = \mathcal{T}^t $ and that $\varphi_i$ is an eigenfunction. Thus for the difference, we have
\begin{align*}
    |\varphi(x) - \varphi(y)| &=\frac{1}{|\lambda_i|}\Big|\int_\X\varphi_i(x')\frac{e(x')}{\varrho(x')}~d\mu(x')\Big| \\
                              &\leq \frac{1}{|\lambda_i|}\underbrace{\|\varphi_i\|_{L^2_{\mu}}}_{=1} \underbrace{\|e/\varrho\|_{L^2_{\mu}}}_{ = \|e\|_{L^2_{1/\mu}}}
                               \leq \frac{2\varepsilon}{|\lambda_i|}\,. \qedhere
\end{align*}
\end{proof}

Assuming that the eigenfunctions are normalized (which we do from now on), i.e., $\|\varphi_i\|_{L^2_{\mu}}=1$, and that~$\varepsilon$ is sufficiently small, Lemma~\ref{lem:eigenfunctionsalmostconstant} implies that the dominant eigenfunctions (i.e.,~$|\lambda_i|\approx 1$) are almost constant on the level sets of~$\mathcal{Q}$. This can now be used to show that the $\varphi_i$ are not fully dependent on $\X$, but only on the level sets of~$\mathcal{Q}$ (up to a small error), in a sense similar to Lemma~\ref{lem:parametrizableeigenfunctions1}.

\begin{corollary}\label{cor:parametrizableeigenfunctions2}
Let $\mathbf{X}_t$ be $(\varepsilon,r)$-reducible. Then there exists a function $\tilde{\varphi}_i \colon \mathbb{M} \rightarrow \R $ such that
\begin{equation*}
    \left|\varphi_i(x) - \tilde{\varphi}_i\big(\mathcal{Q}(x)\big)\right| \leq \frac{\varepsilon}{|\lambda_i|}\,.
\end{equation*}
\end{corollary}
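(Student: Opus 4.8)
The plan is to construct $\tilde\varphi_i$ fiberwise directly from the oscillation bound supplied by Lemma~\ref{lem:eigenfunctionsalmostconstant}. First I would fix a representative of the eigenfunction $\varphi_i$ for which that lemma holds pointwise (the same convention already implicit in its proof, where $\varphi_i$ is evaluated at points). For $m$ in the image $\mathcal{Q}(\X)$ the fiber $\mathcal{Q}^{-1}(m) = \{x\in\X : \mathcal{Q}(x) = m\}$ is non-empty, and Lemma~\ref{lem:eigenfunctionsalmostconstant} says precisely that $\varphi_i$ varies by at most $2\varepsilon/|\lambda_i|$ on it. Hence
\[
a_i(m) := \inf_{x\in\mathcal{Q}^{-1}(m)}\varphi_i(x), \qquad b_i(m) := \sup_{x\in\mathcal{Q}^{-1}(m)}\varphi_i(x)
\]
are finite and satisfy $0 \le b_i(m) - a_i(m) \le 2\varepsilon/|\lambda_i|$.

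Then I would set $\tilde\varphi_i(m) := \tfrac12\big(a_i(m) + b_i(m)\big)$ for $m\in\mathcal{Q}(\X)$, i.e.\ the midpoint of the range of $\varphi_i$ over the fiber. On $\mathbb{M}\setminus\mathcal{Q}(\X)$, which may be non-empty because $\mathbb{M}=\mathrm{cl}(\mathcal{Q}(\X))$, the value of $\tilde\varphi_i$ is irrelevant for the claimed estimate since $\mathcal{Q}(x)\in\mathcal{Q}(\X)$ for every $x\in\X$, so it can be fixed arbitrarily (e.g.\ by zero, or by continuous extension). With this definition, for any $x\in\X$ put $m=\mathcal{Q}(x)$; then $a_i(m)\le\varphi_i(x)\le b_i(m)$ gives
\[
\big|\varphi_i(x) - \tilde\varphi_i(\mathcal{Q}(x))\big| \le \tfrac12\big(b_i(m) - a_i(m)\big) \le \frac{\varepsilon}{|\lambda_i|},
\]
which is the assertion.

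I do not expect any genuine obstacle: the entire content of the corollary sits in Lemma~\ref{lem:eigenfunctionsalmostconstant}, and the only points needing a word of care are (i) working with a fixed representative of $\varphi_i$ so that pointwise fiber values make sense, and (ii) the elementary observation that the midpoint construction halves the fiber-oscillation bound $2\varepsilon/|\lambda_i|$ down to $\varepsilon/|\lambda_i|$. If one additionally wanted $\tilde\varphi_i$ to be measurable or smooth on all of $\mathbb{M}$, the alternative $\tilde\varphi_i := \varphi_i\circ s$ for a section $s$ of $\mathcal{Q}$ works but only yields the weaker constant $2\varepsilon/|\lambda_i|$ (since $\mathcal{Q}\circ s\circ\mathcal{Q} = \mathcal{Q}$), so the midpoint choice is the cleanest route to the stated bound.
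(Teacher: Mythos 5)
Your construction is exactly the paper's: the authors also define $\tilde{\varphi}_i(\mathcal{Q}(x))$ as the midpoint of $\inf_{\mathcal{Q}(y)=\mathcal{Q}(x)}\varphi_i(y)$ and $\sup_{\mathcal{Q}(y)=\mathcal{Q}(x)}\varphi_i(y)$ and invoke Lemma~\ref{lem:eigenfunctionsalmostconstant} to bound the fiber oscillation by $2\varepsilon/|\lambda_i|$, halving it via the midpoint. Your additional remarks on fixing a representative and on the values over $\mathbb{M}\setminus\mathcal{Q}(\X)$ are correct refinements the paper leaves implicit.
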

\begin{proof}
Fix~$x\in\X$, and let~$z = \mathcal{Q}(x)$.
Define the function  $\tilde{\varphi}_i$ by
\begin{equation*}
    \tilde{\varphi}_i(\mathcal{Q}(x)) := \frac12 \left( \inf_{\mathcal{Q}(y)=z} \varphi_i(y) + \sup_{\mathcal{Q}(y)=z} \varphi_i(y) \right).
\end{equation*}
Since by Lemma~\ref{lem:eigenfunctionsalmostconstant} it holds that~$|\varphi_i(x) - \varphi_i(y)| \le \tfrac{2\varepsilon}{|\lambda_i|}$ if~$\mathcal{Q}(x) = \mathcal{Q}(y)$, we have that
\[
\left| \sup_{\mathcal{Q}(y)=z} \varphi_i(y) - \inf_{\mathcal{Q}(y)=z} \varphi_i(y) \right| \le \frac{2\varepsilon}{|\lambda_i|}\,,
\]
thus our choice of~$\tilde{\varphi}_i$ gives
\[
\left| \varphi_i(x) - \tilde{\varphi}_i(\mathcal{Q}(x)) \right| \le \frac{\varepsilon}{|\lambda_i|}\,.
\]
\end{proof}

\subsection{Embedding the transition manifold} \label{ssec:Whitney}

In light of Corollary~\ref{cor:parametrizableeigenfunctions2}, one could say that $\mathcal{Q}$ is an ``$\mathbb{M}$-valued reaction coordinate''.
However, as we have no access to~$\mathbb{M}$ so far, and a $\mathbb{R}^k$-valued reaction coordinate is more intuitive, we aim to obtain a more useful representation of the transition manifold through~\emph{embedding} it into a finite, possibly low-dimensional Euclidean space.

We will see that we are very free in the choice of the embedding mapping, even though the manifold $\mathbb{M}$ is not known explicitly (we only assumed that it exists). 
To achieve this, we will use an infinite-dimensional variant of the \emph{weak Whitney embedding theorem}~\cite{SaYoCa91,Whi36}, which, roughly speaking, states that ``almost every bounded linear map from~$L^1(\mathbb{X})$ to $\mathbb{R}^{2r+1}$ will be one-to-one on $\mathbb{M}$ and its image''. We first specify what we mean by ``almost every'' in the context of bounded linear maps, following the notions of Sauer et al.~\cite{SaYoCa91}.

\begin{definition}[Prevalence]
A Borel subset $\mathbb{S}$ of a normed linear space $\mathbb{V}$ is called \emph{prevalent} if there is a finite-dimensional subspace $\mathbb{E}$ of $\mathbb{V}$ such that for each $v\in\mathbb{V}$, $v+e$ belongs to $\mathbb{S}$ for (Lebesgue) almost every $e$ in $\mathbb{E}$.
\end{definition}

As the infinite-dimensional embedding theorem from Hunt et al.~\cite{HuKa99} is applicable not only to smooth manifolds, but to arbitrary subsets $\mathbb{A}\subset\mathbb{V}$ of fractal dimension, it uses the concepts of \emph{box covering dimension} $\dim_B(\mathbb{A})$ and \emph{thickness exponent} $\tau(\mathbb{A})$ from fractal geometry. Intuitively, $\dim_B(\mathbb{A})$ describes the exponent of the growth rate in the number of boxes of decreasing side length that are needed to cover $\mathbb{A}$, and~$\tau(\mathbb{A})$ describes how well $\mathbb{A}$ can be approximated using only finite-dimensional linear subspaces of~$\mathbb{V}$. As these concepts coincide with the traditional measure of dimensionality in our setting, we will not go into detail here and point to~\cite{HuKa99} for a precise definition.

The general infinite-dimensional embedding theorem reads:
\begin{theorem}[{\cite[Theorem~3.9]{HuKa99}}] \label{thm:embeddingThm}
Let $\mathbb{V}$ be a Banach space and $\mathbb{A}\subset\mathbb{V}$ be a compact set with box-counting dimension $d$ and thickness exponent $\tau$. Let $k>2d$ be an integer, and let $\alpha$ be a real number with
$$
    0<\alpha <\frac{k-2d}{k(1+\tau)}\,.
$$
Then for almost every (in the sense of prevalence) bounded linear function $\mathcal{E}:\mathbb{V}\rightarrow\mathbb{R}^k$ there exists $C>0$ such that for all $x,y\in\mathbb{A}$,
\begin{equation}\label{eq:EmbeddingHoelder}
C\|\mathcal{E}(x)-\mathcal{E}(y)\|_2^\alpha \geq \|x-y\|_2\,,
\end{equation}
where~$\|\cdot\|_2$ denotes the Euclidean $2$-norm.
\end{theorem}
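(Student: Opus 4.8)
The statement is quoted verbatim from~\cite[Theorem~3.9]{HuKa99}, so in practice one simply invokes that reference; here I outline the proof strategy one would follow, namely the prevalence method of Sauer, Yorke and Casdagli~\cite{SaYoCa91} adapted to infinite dimensions. The plan is to reduce the assertion to a single measure estimate: by the definition of prevalence it suffices to exhibit one finite-dimensional space $\mathbb{E}$ of bounded linear maps $\mathbb{V}\to\mathbb{R}^k$ such that, for \emph{every} fixed bounded linear $\mathcal{E}_0\colon\mathbb{V}\to\mathbb{R}^k$, the perturbed map $\mathcal{E}_0+e$ satisfies the H\"older estimate~\eqref{eq:EmbeddingHoelder}, with a constant $C$ allowed to depend on the perturbation, for Lebesgue-almost every $e\in\mathbb{E}$.

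The probe space $\mathbb{E}$ should be built from the thickness exponent. For a small parameter $\delta>0$, the definition of $\tau(\mathbb{A})=\tau$ supplies a finite-dimensional subspace $\mathbb{V}_\delta\subset\mathbb{V}$ of dimension at most $c_{\tau'}\,\delta^{-\tau'}$ for any $\tau'>\tau$, together with a bounded linear map $Q_\delta\colon\mathbb{V}\to\mathbb{V}_\delta$ satisfying $\|x-Q_\delta x\|\le\delta$ for all $x\in\mathbb{A}$; one lets $\mathbb{E}$ consist of the compositions $L\circ Q_\delta$ with $L\colon\mathbb{V}_\delta\to\mathbb{R}^k$ linear, so $\dim\mathbb{E}=k\dim\mathbb{V}_\delta$ and elements of $\mathbb{E}$ are parametrised by $L$. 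The heart of the argument is then a two-scale counting estimate. Fixing a spatial scale $r$ with $\delta\ll r$, one covers $\mathbb{A}$ by at most $c_{d'}\,r^{-d'}$ balls of radius $r$, for any $d'>d$ (possible since $\dim_B(\mathbb{A})=d$). For two such balls whose centres lie at distance $\ge r$ and any $x,y$ in them, $\|Q_\delta x-Q_\delta y\|$ is comparable to $\|x-y\|\ge r-2\delta$, so the evaluation map $L\mapsto L(Q_\delta x-Q_\delta y)$ is onto $\mathbb{R}^k$ with a right inverse of norm at most $c/r$; consequently the normalised Lebesgue measure of the $L$ in a fixed bounded region of parameter space for which $\|\mathcal{E}_0(x)-\mathcal{E}_0(y)+L(Q_\delta x-Q_\delta y)\|_2<\rho$ is at most a constant times $(\rho/r)^k$. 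Summing over the at most $c\,r^{-2d'}$ relevant pairs of balls and setting $\rho\approx r^{1/\alpha}$, one bounds the measure of the ``bad'' parameters $L$ at scale $r$ by a constant times $r^{k/\alpha-2d'}$, up to the inflation coming from $\dim\mathbb{E}\approx\delta^{-\tau'}$.

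The final step is a Borel--Cantelli argument over the dyadic scales $r=2^{-n}$: one couples $\delta$ and $\rho$ to $r$ so that the per-scale bad-set measures are summable, whence for almost every $L$ only finitely many scales fail, and a continuity/compactness argument turns the surviving family of scale-dependent inequalities into the single uniform estimate~\eqref{eq:EmbeddingHoelder} with one constant $C=C(L)$. The hard part is precisely the bookkeeping in this two-scale estimate: one must simultaneously play off the box-counting dimension $d$ (which controls the number of pairs, hence the factor $r^{-2d'}$), the target dimension $k$ (which controls the decay rate $(\rho/r)^k$ of the per-pair bad measure, and forces $k>2d$ so that the exponent $k/\alpha-2d'$ can be made positive), and the thickness exponent $\tau$ (which controls $\dim\mathbb{E}$ and hence how small $\delta$ may be taken relative to $r$ before the measure normalisation degrades). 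Optimising this interplay is exactly what produces the admissible range $0<\alpha<\frac{k-2d}{k(1+\tau)}$.
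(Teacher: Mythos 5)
The paper offers no proof of this statement at all---it is imported verbatim from Hunt and Kaloshin~\cite{HuKa99}---and you correctly treat it as a cited external result. Your accompanying sketch of the prevalence argument (probe space built from the thickness-exponent approximating subspaces, a per-pair measure estimate of order $(\rho/r)^k$ degraded by $\dim\mathbb{E}\approx\delta^{-\tau}$, and a Borel--Cantelli argument over dyadic scales whose bookkeeping yields the exponent $\tfrac{k-2d}{k(1+\tau)}$) is a faithful outline of the proof given in that reference, so there is nothing in the paper to compare it against.
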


Note that (\ref{eq:EmbeddingHoelder}) implies H\"older continuity of $\mathcal{E}^{-1}$ on $\mathcal{E}(\mathbb{A})$ and in particular that $\mathcal{E}$ is one-to-one on $\mathbb{A}$ and its image. Using that the box counting dimension of a smooth $r$-dimensional manifold $\mathbb{K}$ is simply $r$ and that the thickness exponent is bounded from above by the box-counting dimension, thus $0\leq \tau(\mathbb{K}) \leq r$, see \cite{HuKa99}, we get the following infinite-dimensional embedding theorem for smooth manifolds.

\begin{corollary}\label{thm:whitney}
Let $\mathbb{V}$ be a Banach space, let $\mathbb{K}\subset \mathbb{V}$ be a smooth manifold of dimension~$r$ and let $k>2r$. 
Then almost every (in the sense of prevalence) bounded linear function~$\mathcal{E}:\mathbb{V}\rightarrow\mathbb{R}^k$ is one-to-one on $\mathbb{K}$ and its image in $\mathbb{R}^k$.
\end{corollary}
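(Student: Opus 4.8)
The plan is to obtain Corollary~\ref{thm:whitney} as an immediate specialization of Theorem~\ref{thm:embeddingThm}, the only work being the translation of the fractal-geometric hypotheses into the smooth-manifold setting and the verification that the admissible range for the H\"older exponent~$\alpha$ is non-empty.

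First I would take~$\mathbb{A}=\mathbb{K}$ in Theorem~\ref{thm:embeddingThm}. Since the transition manifold in Definition~\ref{def:reducibleprocess} is a \emph{closed} (i.e.\ compact, boundaryless) manifold, $\mathbb{K}$ is compact, so the compactness hypothesis of the theorem is met. Next I would invoke the two standard facts from~\cite{HuKa99} already quoted in the text: the box-counting dimension of a smooth $r$-dimensional manifold equals~$r$, and the thickness exponent is bounded above by the box-counting dimension, so~$0\le\tau(\mathbb{K})\le r$. Thus in the notation of Theorem~\ref{thm:embeddingThm} we have~$d=r$ and~$\tau=\tau(\mathbb{K})\in[0,r]$.

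Then I would check the quantitative hypotheses. The assumption~$k>2r=2d$ is exactly~$k>2d$; moreover~$k-2d=k-2r>0$ while~$k(1+\tau)>0$, so the interval
\[
\left(0,\;\frac{k-2d}{k(1+\tau)}\right)
\]
is non-empty and we may fix any~$\alpha$ in it. Theorem~\ref{thm:embeddingThm} then yields: for prevalent bounded linear~$\mathcal{E}\colon\mathbb{V}\to\mathbb{R}^k$ there is~$C>0$ with~$C\|\mathcal{E}(x)-\mathcal{E}(y)\|_2^{\alpha}\ge\|x-y\|_2$ for all~$x,y\in\mathbb{K}$. From this, injectivity of~$\mathcal{E}|_{\mathbb{K}}$ is immediate: if~$\mathcal{E}(x)=\mathcal{E}(y)$ the left-hand side vanishes, forcing~$x=y$. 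The same inequality shows~$\mathcal{E}^{-1}$ is~$\alpha$-H\"older on~$\mathcal{E}(\mathbb{K})$, so~$\mathcal{E}$ is in fact a homeomorphism of~$\mathbb{K}$ onto its image; in particular it is one-to-one on~$\mathbb{K}$ and its image, as claimed.

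I do not expect a genuine obstacle here, since all the analytic difficulty is contained in Theorem~\ref{thm:embeddingThm}. The only points requiring care are bookkeeping ones: making sure the admissible~$\alpha$-range is non-degenerate (which uses the \emph{strict} inequality~$k>2r$ together with the bound~$\tau\le r$), and the compactness of~$\mathbb{K}$, which is precisely why $(\varepsilon,r)$-reducibility in Definition~\ref{def:reducibleprocess} is phrased in terms of a \emph{closed} manifold; for a non-compact smooth manifold one would instead exhaust it by compact pieces and use that a countable intersection of prevalent sets is prevalent.
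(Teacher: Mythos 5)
Your proposal is correct and follows essentially the same route as the paper, which obtains the corollary by specializing Theorem~\ref{thm:embeddingThm} to a smooth $r$-dimensional manifold using the facts that its box-counting dimension is $r$ and its thickness exponent satisfies $0\le\tau(\mathbb{K})\le r$, so that the H\"older bound~\eqref{eq:EmbeddingHoelder} gives injectivity. Your extra bookkeeping (non-emptiness of the $\alpha$-range and the compactness of $\mathbb{K}$) only makes explicit what the paper leaves implicit.
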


Thus, since the transition manifold~$\mathbb{M}$ is assumed to be a smooth $r$-dimensional manifold in $L^1(\X)$, an arbitrarily chosen bounded linear map $\mathcal{E} \colon L^1(\X)\rightarrow \R^{2r+1}$ can be assumed to be one-to-one on $\mathbb{M}$ and its image. In particular, $\mathcal{E}(\mathbb{M})$ is again an $r$-dimensional manifold (although not necessarily smooth). With this insight, we can now construct a reaction coordinate in Euclidean space:

\begin{corollary}\label{cor:xihaterrorsmall}
Let $\mathbf{X}_t$ be $(\varepsilon,r)$-reducible and let $\mathcal{E} \colon L^1(\X) \rightarrow \R^{2r+1}$ be one-to-one on $\mathbb{M}$ and its image. Define $\xi \colon \R^n \rightarrow \R^{2r+1}$ by
\begin{equation}\label{eq:xidefinition}
    \xi(x) := \mathcal{E}\big(\mathcal{Q}(x)\big)\,.
\end{equation}
Then there exists a function $\hat{\varphi}_i \colon \R^{2r+1} \rightarrow \R $ so that
\begin{equation}\label{eq:phidifference}
    |\varphi_i(x) - \hat{\varphi}_i(\xi(x))| \leq \frac{\varepsilon}{|\lambda_i|}\,.
\end{equation}
\end{corollary}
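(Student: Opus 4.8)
The plan is to transport the $\mathbb{M}$-valued parametrization already obtained in Corollary~\ref{cor:parametrizableeigenfunctions2} into Euclidean space by composing it with the inverse of the embedding~$\mathcal{E}$. First I would invoke Corollary~\ref{cor:parametrizableeigenfunctions2}: since $\mathbf{X}_t$ is $(\varepsilon,r)$-reducible, there is a function $\tilde{\varphi}_i \colon \mathbb{M} \to \R$ with $|\varphi_i(x) - \tilde{\varphi}_i(\mathcal{Q}(x))| \leq \varepsilon/|\lambda_i|$ for all $x\in\X$. So all the analytic work has effectively been done; what remains is purely a change of representation of the index set from $\mathbb{M}$ to $\mathcal{E}(\mathbb{M})\subset\R^{2r+1}$.

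Next I would use that $\mathcal{E}$ is one-to-one on $\mathbb{M}$: the restriction $\mathcal{E}|_{\mathbb{M}}\colon \mathbb{M}\to \mathcal{E}(\mathbb{M})$ is then a bijection, and hence admits an inverse $(\mathcal{E}|_{\mathbb{M}})^{-1}\colon \mathcal{E}(\mathbb{M})\to\mathbb{M}$. I would define $\hat{\varphi}_i\colon\R^{2r+1}\to\R$ by $\hat{\varphi}_i := \tilde{\varphi}_i\circ (\mathcal{E}|_{\mathbb{M}})^{-1}$ on $\mathcal{E}(\mathbb{M})$ and extend it arbitrarily (say, by zero) to the rest of $\R^{2r+1}$. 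Since $\mathcal{Q}(x)\in\mathbb{M}$ for every $x\in\X$, we have $\xi(x) = \mathcal{E}(\mathcal{Q}(x)) \in \mathcal{E}(\mathbb{M})$ and $(\mathcal{E}|_{\mathbb{M}})^{-1}(\xi(x)) = \mathcal{Q}(x)$, so that $\hat{\varphi}_i(\xi(x)) = \tilde{\varphi}_i(\mathcal{Q}(x))$. Plugging this into the bound from Corollary~\ref{cor:parametrizableeigenfunctions2} yields exactly~\eqref{eq:phidifference}.

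The one point that deserves a remark — and it is the closest thing to an obstacle, though a very mild one — is regularity of $\hat{\varphi}_i$: the composition with a set-theoretic inverse is harmless for the pointwise estimate, but if continuity or measurability of $\hat{\varphi}_i$ is needed later, one notes that Theorem~\ref{thm:embeddingThm} actually provides H\"older continuity of $(\mathcal{E}|_{\mathbb{M}})^{-1}$ on $\mathcal{E}(\mathbb{M})$ (this is the content of~\eqref{eq:EmbeddingHoelder}), so $\hat{\varphi}_i$ inherits the regularity of $\tilde{\varphi}_i$ up to a H\"older modulus. Beyond this bookkeeping there is nothing to prove; the corollary is simply the statement that the embedding $\mathcal{E}$ turns the ``$\mathbb{M}$-valued reaction coordinate'' $\mathcal{Q}$ into the genuine $\R^{2r+1}$-valued reaction coordinate $\xi$ without degrading the parametrization error.
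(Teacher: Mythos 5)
Your proposal is correct and follows essentially the same route as the paper: define $\hat{\varphi}_i := \tilde{\varphi}_i\circ(\mathcal{E}|_{\mathbb{M}})^{-1}$ on $\mathcal{E}(\mathbb{M})$ using the invertibility of $\mathcal{E}$ on $\mathbb{M}$, and reduce the bound to Corollary~\ref{cor:parametrizableeigenfunctions2}. The additional remark on H\"older regularity of the inverse is a fine observation but not needed for the stated pointwise estimate.
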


\begin{proof}
As $\mathcal{E}$ is one-to-one on $\mathbb{M}$ and its image, it is invertible on $\mathcal{E}(\mathbb{M})$. With $\tilde{\varphi}_i$ chosen as in the proof of Corollary~\ref{cor:parametrizableeigenfunctions2}, define~$\hat{\varphi}_i \colon \mathcal{E}(\mathbb{M}) \to \R$ by
\begin{equation}\label{eq:phihatdefinition}
    \hat{\varphi_i}(\hat{z}) := \tilde{\varphi}_i\big(\mathcal{E}^{-1}(\hat{z})\big)\,.
\end{equation}
Then
\begin{equation*}
    |\varphi_i(x) - \hat{\varphi}_i(\xi(x))| 
        = |\varphi_i(x) - \tilde{\varphi}_i(\mathcal{Q}(x))| 
        \overset{\text{Cor.~\ref{cor:parametrizableeigenfunctions2}}}{\leq} \frac{\varepsilon}{|\lambda_i|} \,. \qedhere
\end{equation*}
\end{proof}
Since~$\hat{\mathbb{M}} := \mathcal{E}(\mathbb{M})$ is an~$r$-dimensional manifold,~$\xi$ is effectively an~$r$-dimensional reaction coordinate.
Thus, if the right-hand side of \eqref{eq:phidifference} is small, the $\varphi_i$ are ``almost parametrizable'' by the $r$-dimensional reaction coordinate $\xi$. Using Lemma \ref{lem:parametrizableeigenfunctions1}, we immediately see that this results in a small projection error $\|P_\xi^\perp\varphi_i\|$, and due to Corollary~\ref{cor:characRC} in a good transfer operator approximation; hence~$\xi$ is a good reaction coordinate.

The reaction coordinate~$\xi$ remains an ``ideal'' case, because we have no access to the map~$\mathcal{Q}$ and hence to~$\mathbb{M}$, only to~$\overline{\mathcal{Q}}(x) = p^t(x,\cdot) \approx \mathcal{Q}(x)$.
We summarize the construction of the ideal reaction coordinate~$\xi$ in Figure~\ref{fig:RC_ideal}.

\begin{figure}[htb]
    \centering
    \includegraphics[width=\textwidth]{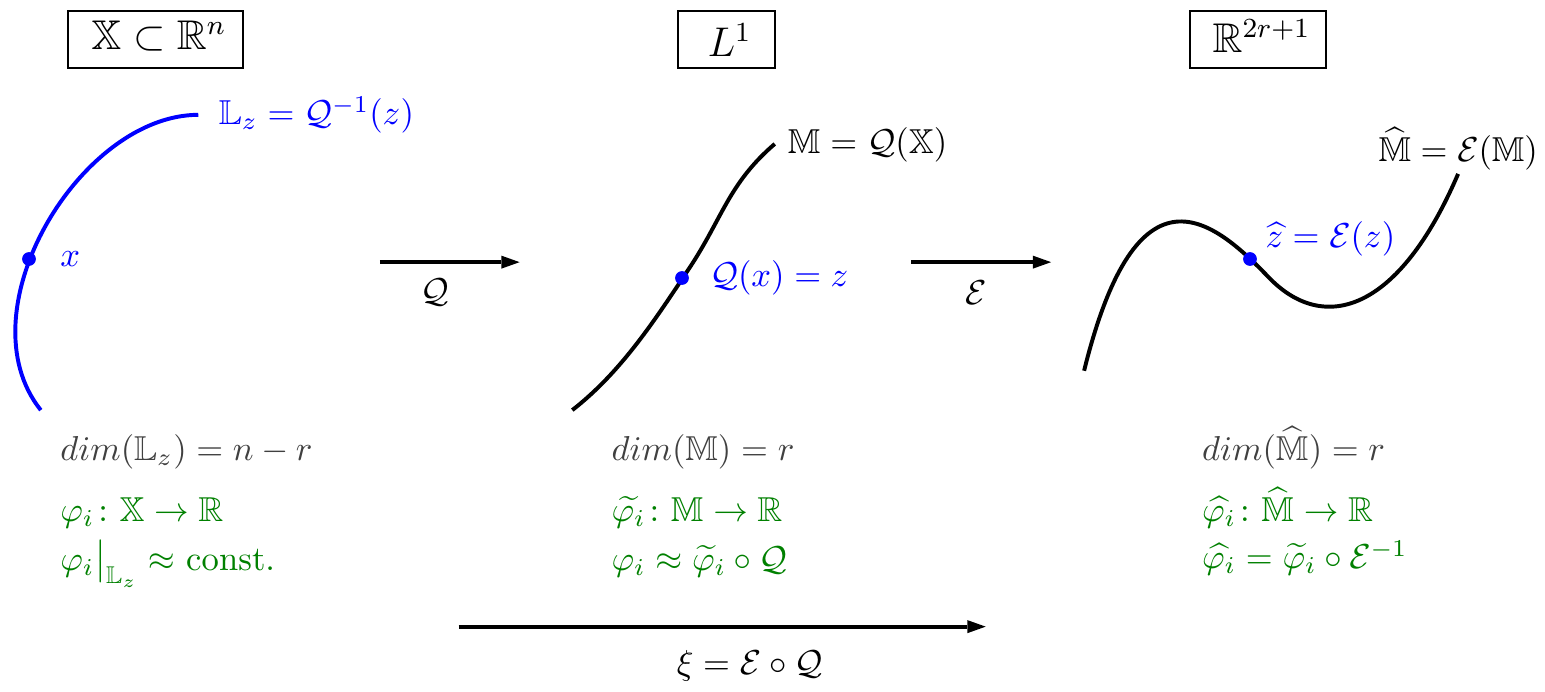}
    \caption{Summary of the construction of the ideal reaction coordinate~$\xi$.}
    \label{fig:RC_ideal}
\end{figure}

\begin{remark}
The recent work of Dellnitz et al.~\cite{DHZ16} uses similar embedding techniques to identify finite-dimensional objects in the state space of infinite-dimensional dynamical systems. They utilize the infinite-dimensional delay-embedding theorem of Robinson~\cite{Ro05}, a generalization of the well-known \emph{Takens embedding theorem}~\cite{Ta81}, to compute finite-dimensional attractors of delay differential equations by established subdivision techniques~\cite{DeHo97}.
\end{remark}

\subsection{Numerical approximation of the reaction coordinate}\label{sec:NumApprox}

\paragraph{Approximate embedding of the transition manifold.}

We now elaborate how to construct a good reaction coordinate~$\overline{\xi}$ numerically. To use the central definition \eqref{eq:xidefinition} in practice, two points have to be addressed:
\begin{enumerate}
    \item How to choose the embedding $\mathcal{E}$?
    \item How to deal with the fact that we do not know~$\mathcal{Q}$?
\end{enumerate}
For the choice of $\mathcal{E}$, we restrict ourselves to linear maps of the form
\begin{equation} \label{eq:gammadefinition}
    \mathcal{E}(f) :=
    \begin{pmatrix}
        \innerprod{f}{\eta_1} \\
        \vdots \\
        \innerprod{f}{\eta_{2r+1}}
    \end{pmatrix}\,,
\end{equation}
with arbitrarily chosen linearly independent functions~$ \eta_i\in L^{\infty}(\X) $, where~$\langle f,\eta_i\rangle = \int f\eta_i$. In practice, we will choose the~$\eta_i:\X\to\mathbb{R}$ as linear functions themselves, i.e.~$\eta_i(x) = a_i^{\intercal}x$ for some, usually randomly drawn,~$a_i\in\mathbb{R}^n$. Note that then~$\eta_i\notin L^{\infty}$, but this is not a problem because we will embed the functions~$f = p^t(x,\cdot)$, and~$p^t(x,y)$ can be shown to decay exponentially as~$\|y\|_2\to\infty$, cf.~\cite[Theorem C.1]{BiKoJu15}. Thus,~$\innerprod{f}{\eta_i}$ will exist.
For linearly independent~$\eta_i$, these maps are still generic in the sense of the Whitney embedding theorem, and thus still embed the transition manifold~$\mathbb{M}$.

A natural choice for the approximation of the unknown map~$\mathcal{Q}$ is the mapping to the transition probability density,
\begin{equation}
    \overline{\mathcal{Q}}: x \mapsto p^t(x,\cdot)\,,
\end{equation} 
as~$\|\mathcal{Q}(x) - p^t(x,\cdot)\|_{L^2_{1/\mu}} \leq \varepsilon$.
With this, we consider
\begin{equation} \label{eq:xidefinition2}
    \mathcal{E}\big(\overline{\mathcal{Q}}(x)\big) = \mathcal{E}\big(p^t(x,\cdot)\big) = 
    \begin{pmatrix}
        \innerprod{p^t(x, \cdot)}{\eta_1} \\
        \vdots \\
        \innerprod{p^t(x, \cdot)}{\eta_{2r+1}}
    \end{pmatrix} \stackrel{\eqref{eq:Koopman operator}}{=}
    \begin{pmatrix}
        \mathcal{K}^t \eta_1(x) \\
        \vdots\\
        \mathcal{K}^t \eta_{2r+1}(x)
    \end{pmatrix}.
\end{equation}
The values on the right-hand side can in turn be approximated by a Monte Carlo quadrature, using only short-time trajectories of the original dynamics:
\begin{equation} \label{eq:Monte Carlo approximation}
    \mathcal{K}^t \eta_i(x) = \mathsf{E}\big[\eta_i(\mathbf{X}_t) \mid \mathbf{X}_0 = x \big] \approx \frac{1}{M} \sum_{m=1}^M\eta_i\big(\mathbf{\Phi}_t^{(m)}(x)\big),
\end{equation}
where the $\mathbf{\Phi}_t^{(m)}(x)$ are independent realizations of $\mathbf{X}_t$ with starting point $\mathbf{X}_0=x$, in practice realized by a stochastic integrator (e.g. Euler--Maruyama).

\paragraph{The computationally infeasible reaction coordinate~$\boldsymbol{\xi}$.}

Note that~$\mathcal{E}\circ\overline{\mathcal{Q}}$ is not yet an~$r$-dimensional reaction coordinate, since~$\overline{\mathcal{Q}}(\X)$ is only approximately an $r$-dimensional manifold; more precisely, it lies in the~$\varepsilon$-neighborhood of an~$r$-dimensional submanifold~$\mathbb{M}$ of~$L^1$. Hence, also~$\mathcal{E}(\overline{\mathcal{Q}}(\X))$ is only approximately an~$r$-dimensional manifold; see the magenta regions in Figure~\ref{fig:RC_dontknowhow}.

\begin{figure}[htb]
    \centering
    \includegraphics[width=\textwidth]{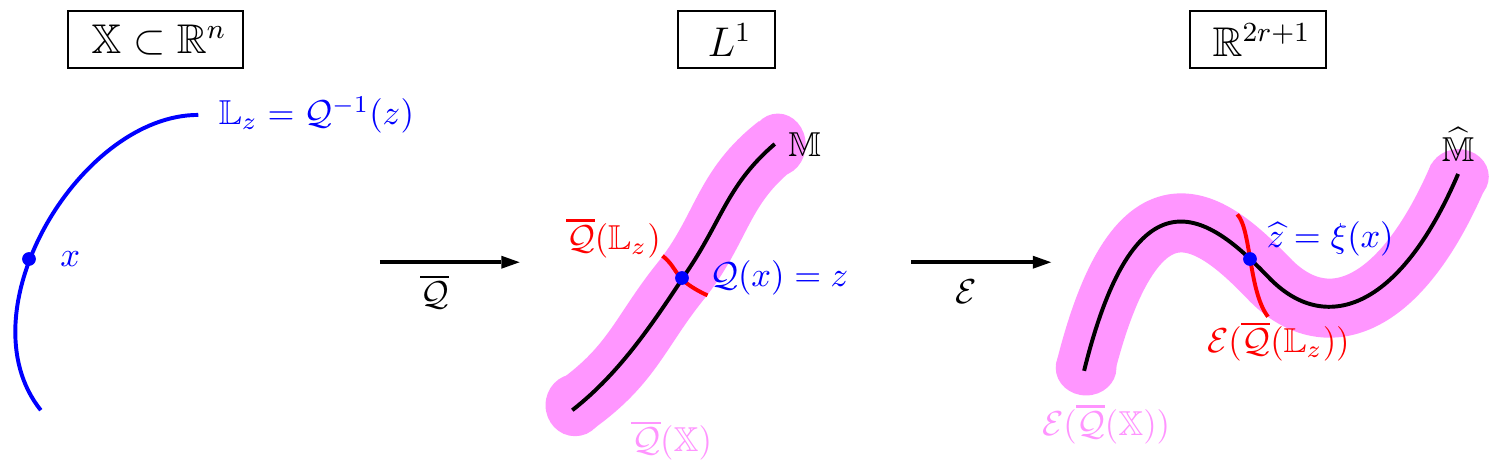}
    \caption{How to make a good $r$-dimensional reaction coordinate out of~$\mathcal{E}\circ\overline{\mathcal{Q}}$? Given the analysis from the previous section, we would like to make the level sets $\mathbb{L}_z$ of $\mathcal{Q}$ also the level sets of $\overline{\xi}$ (red line segment). Unfortunately, we have no access to these.}
    \label{fig:RC_dontknowhow}
\end{figure}

The question now is how we can reduce~$\mathcal{E}\circ\overline{\mathcal{Q}}$ to an $r$-dimensional \emph{good} reaction coordinate. Since we know from above that~$\xi = \mathcal{E}\circ \mathcal{Q}$ is a good reaction coordinate, let us see what would be needed to construct it.

The property of~$\xi$ that we want is that it is constant along level sets~$\mathbb{L}_z$ of~$\mathcal{Q}$, i.e., $\xi\vert_{\mathbb{L}_z} = \mathrm{const}$ (because this implies that it is a good reaction coordinate, cf.~Corollary~\ref{cor:xihaterrorsmall}).
Hence, if we could identify~$\hat{\mathbb{M}}$ as an $r$-dimensional manifold in~$\R^{2r+1}$, we would project~$\mathcal{E}(\overline{\mathcal{Q}}(x))$ along~$\mathcal{E}(\overline{\mathcal{Q}}(\mathbb{L}_z))$ onto~$\hat{\mathbb{M}}$ --- assuming that~$\hat{\mathbb{M}}$ and~$\mathcal{E}(\overline{\mathcal{Q}}(\mathbb{L}_z))$ intersect in~$\R^{2r+1}$ --- to obtain~$\xi(x)$ as the resulting point (see Figure~\ref{fig:RC_dontknowhow}, where we would project along the red line on the right).
Unfortunately, we have no access to~$\mathcal{Q}$ (not to mention that~$\hat{\mathbb{M}}$ and~$\mathcal{E}(\overline{\mathcal{Q}}(\mathbb{L}_z))$ need not intersect in~$\R^{2r+1}$) and hence to its level sets $\mathbb{L}_z$. Thus, this strategy seems infeasible.

\paragraph{A computationally feasible reaction coordinate.}

What helps us at this point is that there is a certain amount of arbitrariness in the definition of~$\mathcal{Q}$. Recalling Definition~\ref{def:reducibleprocess}, what we are given is~$\overline{\mathcal{Q}}$, and we construct~$\mathcal{Q}(x)$ as a projection of~$\overline{\mathcal{Q}}(x)$ onto the~$r$-dimensional manifold~$\mathbb{M}$ by the closest-point projection~$\mathcal{Q}'$; i.e.,~$\mathcal{Q} = \mathcal{Q}'\circ\overline{\mathcal{Q}}$. This choice of~$\mathcal{Q}'$ is convenient, because we can show
\begin{equation}
    \|\overline{\mathcal{Q}}(x) - \overline{\mathcal{Q}}(y)\|_{L^2_{1/\mu}}\le 2\varepsilon\quad \text{for every }\mathcal{Q}(x) = \mathcal{Q}(y)~\text{(i.e., on level sets of $\mathcal{Q}'$),}
    \label{eq:QbarClose}
\end{equation}
which is used in Lemma~\ref{lem:eigenfunctionsalmostconstant}. 
Other choices of~$\mathcal{Q}'$ could, however, yield a similarly practicable~$\mathcal{O}(\varepsilon)$-bound in~\eqref{eq:QbarClose}. Our strategy will be to choose a specific~$r$-dimensional reaction coordinate~$\overline{\xi}$ and to show that in general it can be expected to be a good reaction coordinate.

Let us recall that, by assumption, the set~$\overline{\mathcal{Q}}(\X)$ is contained in the~$\varepsilon$-neighborhood of an unknown smooth~$r$-dimensional manifold~$\mathbb{M}\subset L^1(\X)$. Thus, a generic smooth map~$\mathcal{E} \colon L^1(\X) \to \R^{2r+1}$ will embed~$\mathbb{M}$ into~$\R^{2r+1}$, forming a diffeomorphism from~$\mathbb{M}$ to~$\hat{\mathbb{M}}$. Thus,~$\mathcal{E}$ is going to map~$\overline{\mathcal{Q}}(\X)$ to an~$\mathcal{O}(\varepsilon)$-neighborhood of~$\hat{\mathbb{M}}$. This means, the~$r$-dimensional manifold structure of~$\hat{\mathbb{M}}$ should still be detectable and can be identified with standard manifold learning tools. We use the diffusion maps algorithm (see Section~\ref{sec:diffmaps} below), which gives us a map $\Psi : \R^{2r+1} \rightarrow \R^r$ (the diffusion map). Then we define $\overline\xi$ as
\begin{equation}
    \overline{\xi} := \Psi \circ \mathcal{E}\circ \overline{\mathcal{Q}}.
    \label{eq:xibar}
\end{equation}
This is depicted on the right-hand side of Figure~\ref{fig:RC_realized}, where the red dashed line shows the level set~$\hat{\mathbb{L}}_{\hat{z}} = \{ z\in \R^{2r+1}: \Psi(z) = \Psi(\hat z)\}$.

\begin{figure}[htb]
    \centering
    \includegraphics[width = 1.0\textwidth]{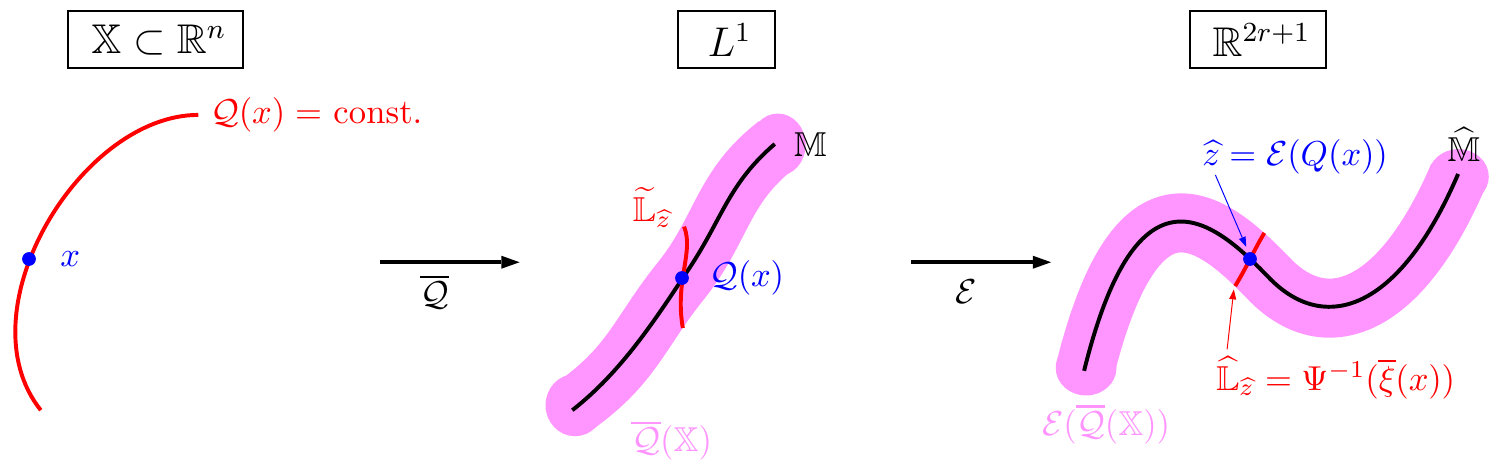}
    \caption{The realized reaction coordinate~$\overline{\xi}$.}
    \label{fig:RC_realized}
\end{figure}

Next, we consider the set~$\tilde{\mathbb{L}}_{\hat{z}} := \mathcal{E}^{-1}(\hat{\mathbb{L}}_{\hat{z}}) \cap \overline{\mathcal{Q}}(\X)$. It holds that $\tilde{\mathbb{L}}_{\hat{z}} = \left\{\overline{\mathcal{Q}}(x)\,\big\vert\,\overline{\xi}(x) = \Psi(\hat{z})\right\}$. Recall that~$\mathcal{E} \colon \mathbb{M} \to \hat{\mathbb{M}}$ is one-to-one, thus~$\tilde{\mathbb{L}}_{\hat{z}}$ intersects~$\mathbb{M}$ in exactly one point. We define this one point as~$\mathcal{Q}(x)$, and thus~$\mathcal{Q}'$ is the projection onto~$\mathbb{M}$ along~$\tilde{\mathbb{L}}_{\hat{z}}$. We see that~$\mathcal{Q}$ is well-defined and that~$\mathcal{Q}(x)=\mathcal{Q}(y) \Leftrightarrow \overline{\xi}(x) = \overline{\xi}(y)$.

At this point we assume that~$\mathcal{E}^{-1}$ is sufficiently well-behaved in a neighborhood of~$\hat{\mathbb{M}}$, it does not ``distort transversality'' of intersections, such that the diameter of~$\tilde{\mathbb{L}}_{\hat{z}}$ is~$\mathcal{O}(\varepsilon)$ with a moderate constant in~$\mathcal{O}(\cdot)$.
We will investigate a formal justification of this fact in a future work, here we assume it holds true, and we will see in the numerical experiments that the assumption is justified.
This assumption implies that~$\|\overline{\mathcal{Q}}(x) - \overline{\mathcal{Q}}(y)\|_{L^2_{1/\mu}} = \mathcal{O}(\varepsilon)$ for~$\mathcal{Q}(x) = \mathcal{Q}(y)$, i.e.\ for~$\overline{\xi}(x) = \overline{\xi}(y)$. Now, however, Lemma~\ref{lem:eigenfunctionsalmostconstant} implies that~$\varphi_i$ is almost constant (up to an error~$\mathcal{O}(\varepsilon)$) on level sets of~$\overline{\xi}$, which, in turn, by Lemma~\ref{lem:parametrizableeigenfunctions1} and Corollary~\ref{cor:characRC} shows that~$\overline{\xi}$ is a good reaction coordinate.

\subsection{Identification of $\hat{\mathbb{M}}$ through Manifold Learning}\label{sec:diffmaps}

In this section, we describe how to identify~$\hat{\mathbb{M}}$ numerically. The task is as follows: Given that we have computed~$\mathcal{E}(\overline{\mathcal{Q}}(x_i)) = \hat z_i \in \R^{2r+1}$ for a number of sample points~$\{x_i\}_{i=1}^{\ell} \subset \mathbb{X}$, we would like to identify the $r$-dimensional manifold $\hat{\mathbb{M}}$, noting the points~$\mathcal{E}(\overline{\mathcal{Q}}(x_i))$ are in a~$\mathcal{O}(\varepsilon)$-neighborhood of~$\hat{\mathbb{M}}$ (see Section \ref{sec:NumApprox}). Additionally, we would like an $r$-dimensional coordinate function~$\Psi:\R^{2r+1} \rightarrow \R^r$ that parameterizes~$\hat{\mathbb{M}}$ (so that the level sets of~$\Psi$ are transversal to~$\hat{\mathbb{M}}$).

This is a default setting for which manifold learning algorithms can be applied. Many standard methods exist; we name multidimensional scaling \cite{kruskal1964, kruskal1966}, Isomap \cite{tenenbaum2000}, and diffusion maps \cite{coifman2006} as a few of the most prominent examples. Because of its favorable properties, we choose the diffusion maps algorithm here and summarize it briefly for our setting in what follows. For details, the reader is referred to \cite{coifman2006, NLCK06, CKLMN08, SEKC09}.

Given sample points $\{\hat z_i\}_{i=1}^{\ell} \subset \R^{2r+1}$, diffusion maps proceeds by constructing a similarity matrix~$W\in\mathbb{R}^{\ell\times\ell}$ with
\[
W_{ij} = h\left(\frac{\|\hat z_i - \hat z_j\|_2^2}{\sigma}\right)\,,
\]
where $\|\cdot \|_2$ is the Euclidean norm in $\R^{2r+1}$, $\sigma > 0$ is a scale factor, and $h : \R \rightarrow \R_+$ is a kernel function which is most commonly chosen as $h(x) = \exp(-x) 1_{x\leq R}$ with a suitably chosen cutoff $R$ that sparsifies $W$ and ensures that only local distances enter the construction. With~$D$ being the diagonal matrix containing the row sums of~$W$, the similarity matrix is then normalized to give~$\tilde W = D^{-1}WD^{-1}$. Finally, the stochastic matrix~$P = \tilde D^{-1}\tilde W$ is constructed, where~$\tilde D$ is the diagonal matrix containing the row sums of~$\tilde W$. $P$ is similar to the symmetric matrix $\tilde D^{-1/2}\tilde W\tilde D^{-1/2}$, thus it has an orthonormal basis of eigenvectors $\{\psi_i\}_{i=0}^{\ell-1}$ with real eigenvalues $\gamma_i$. Since~$P$ is also stochastic,~$|\gamma_i| \leq 1$. The diffusion map is then given by
\begin{equation}
\Psi: \R^{2r+1} \rightarrow \R^r, \quad \Psi(\hat z) = \left(\gamma_1 \psi_1(\hat z),\ldots, \gamma_r \psi_r(\hat z)\right)^{\intercal}.
\label{eq:realXiBar}
\end{equation}
Using properties of the Laplacian eigenproblem on~$\hat{\mathbb{M}}$, one can show that $\Psi$ indeed parameterizes the $r$-dimensional manifold~$\hat{\mathbb{M}}$ for suitably chosen~$\sigma$ \cite{coifman2006}.

\begin{remark}\label{rem:Diffusion Maps Applicability}

The diffusion maps algorithm will only reliably identify $\hat{\mathbb{M}}$ based on the neighborhood relations between the embedded sample points $z_i$, if the points cover all parts of $\hat{\mathbb{M}}$ sufficiently well. In particular, as $p^t(x,\cdot)$ and thus $\big(\mathcal{E}\circ\overline{\mathcal{Q}}\big)(x)$ vary strongly with $x$ traversing the transition regions, a good coverage of those regions is required.

For the various low-dimensional academical examples Section~\ref{sec:Numerical Examples}, this is ensured by choosing the $x_i$ to be a dense grid of points in $\mathbb{X}$. For the high-dimensional example in Section \ref{sec:Lemon Slice}, the evaluation points are generated as a subsample from a long equilibrated trajectory, essentially sampling $\mu$. 
Both of these ad-hoc methods are likely to be unapplicable in realistic high-dimensional systems with very long equilibration times. However, as we mentioned in the introduction, there exist multiple statistical and dynamical approaches to this common problem of quickly sampling the relevant parts of phase space, including the transition regions. Each of these sampling methods can be easily integrated into our proposed algorithm as a pre-processing step.

Fundamentally though, the central idea of our method does not depend crucially on the applicability of diffusion maps. Rather, the latter can be considered an optional post-processing step. Using the $2r+1$-dimensional reaction coordinate 
$$
\overline{\overline{{\xi}}} := \mathcal{E}\circ \overline{\mathcal{Q}}~,
$$
i.e.\ \eqref{eq:xibar} without the manifold learning step, may in practice already represent a sufficient dimensionality reduction. 

In addition, situations may occur where the a priori generation of evaluation points is not possible or desired. One of the final goals and currently work in progress is the construction of an accelerated integration scheme that generates significant evaluation points and their reaction coordinate value ``on the fly''. This is related to the effective dynamics mentioned in fifth point of the conclusion. However, this also requires us to be able to evaluate the reaction coordinate at isolated points, independent of each other, and thus also necessitates the use of the above $\overline{\overline{{\xi}}}$ instead of $\overline{\xi}$.
\end{remark}

\section{Numerical Examples}
\label{sec:Numerical Examples}

Based on the results from the previous sections, we propose the following algorithm to compute reaction coordinates numerically:
\begin{enumerate}
\item Let $ x_i $, $ i = 1, \dots, \ell $, be the points for which we would like to evaluate $ \overline{\xi} $. Here, we assume the points satisfy the requirements addressed in Remark \ref{rem:Diffusion Maps Applicability}.
\item Choose linearly independent functions $ \eta_j \in L^\infty(\X) $, $ j = 1, \dots, 2r+1 $. The essential boundedness of the~$ \eta_j $ is not necessary, but~$ |\eta_j(x)| $ should not grow faster than a polynomial as~$ \|x\|_2\to\infty $.
\item In each point~$ x_i $, start~$ M $ simulations of length~$ t $ and estimate~$\mathcal{E}_j\big(\overline{\mathcal{Q}}(x_i)\big)$ using~\eqref{eq:xidefinition2} and~\eqref{eq:Monte Carlo approximation}, to obtain the point~$\hat{z}_i\in\mathbb{R}^{2r+1}$. We discuss the appropriate choice of $M$ and $t$ in Section \ref{sec:bananapot}.
\item Apply the diffusion maps technique from Section~\ref{sec:diffmaps} for the point cloud~$ \{\hat{z}_i\}_{i=1}^{\ell} $, and obtain~$\Psi:\mathbb{R}^{2r+1}\to\mathbb{R}^r$, a parametrization of the point in its~$r$ essential directions of variation.
\item By~\eqref{eq:realXiBar}, we define the reaction coordinate as
\begin{equation}
\overline{\xi}:\,x_i \mapsto \Psi(\hat{z}_i)\,.
\end{equation}
\end{enumerate}

The numerical effort of this algorithm depends strongly on the third step. Given $\ell$ evaluation points, and a choice of $M$ trajectories per point, the cost is mainly given by $M\cdot \ell\cdot c(t)$, where $c(t)$ is the effort of a single numerical realization of the dynamics up to time $t$. The high-dimensional phase space only enters the algorithm as the domain of the observables $\eta_j$. The cost of evaluating those typically very simple functions\footnote{In our examples, we used linear functions with great success.} at the $M\cdot \ell$ end points of the trajectory is negligible. The cost of the method is thus essentially independent of $n$.

In order to demonstrate the efficacy of our method, we compute the reaction coordinates for three representative problems, namely a simple curved double-well potential, a multi-well potential defined on a circle, both in low and high dimensions, and two slightly different quadruple-well potentials stressing the difference between a one- and a two-dimensional reaction coordinate.

\subsection{Curved double-well potential}
\label{sec:bananapot}

As a first verification, we consider a system with an analytically known reaction coordinate that is then used for comparison. Consider the two-dimensional drift-diffusion process \eqref{eq:overdampedLangevin} with potential
\begin{equation*}
    V(x_1,x_2) = (x_1^2-1)^2 + 2(x_1^2+x_2-1)^2
\end{equation*}
and inverse temperature $\beta = 0.5$. This potential already served as a motivational example for the nature of reaction coordinates in the introduction and is shown in Figure~\ref{fig:bananapot}. The system possesses two metastable sets around the minima $ (-1,0)^{\intercal} $ and $ (1,0)^{\intercal} $, which are connected by the transition path $ \{x\in\mathbb{R}^2 \mid x_2 = 1-x_1^2\} $. The implied time scales, defined in~\eqref{eq:implied time scales}, can be computed from the eigenvalues using a standard Ulam-type Galerkin discretization~\cite{KKS16,KNKWKSN17} of the transfer operator~$ \mathcal{T}^t $ and are shown in Figure~\ref{fig:bananatime scales}a\footnote{In realistic, high-dimensional systems, the computation of the dominant eigenvalues using grid-based methods is likely infeasible. In these situations, the implied time scales have to be estimated, for example using standard Markov State Model techniques \cite{A19-1}.}. We observe a significant gap between $t_1$ and $t_2$ and thus identify $t_1$ as the last slow and $t_2$ as the first fast time scale. Choosing the lag time $t = 2$ then satisfies $t_\text{slow} > t > t_\text{fast}$. A visual inspection of a typical trajectory of length $ t $ starting in one of the two metastable sets as shown in Figure~\ref{fig:bananatime scales}b confirms that the respective set is sampled, yet a transition to the other set is a rare event.

\begin{figure}[htbp]
    \begin{minipage}{0.49\textwidth}
            \centering
            \subfiguretitle{a)}
    \begin{tabular}{ c | c}
    $t_0$ & $\infty$ \\
    $t_1$ & 6.1823 \\
    $t_2$ & 0.9066 \\
    $t_3$ & 0.6098 \\
    $t_4$ & 0.3976 
    \end{tabular}
        \end{minipage}
        \begin{minipage}{0.49\textwidth}
            \centering
            \subfiguretitle{b)}
            \includegraphics[width=0.9\textwidth]{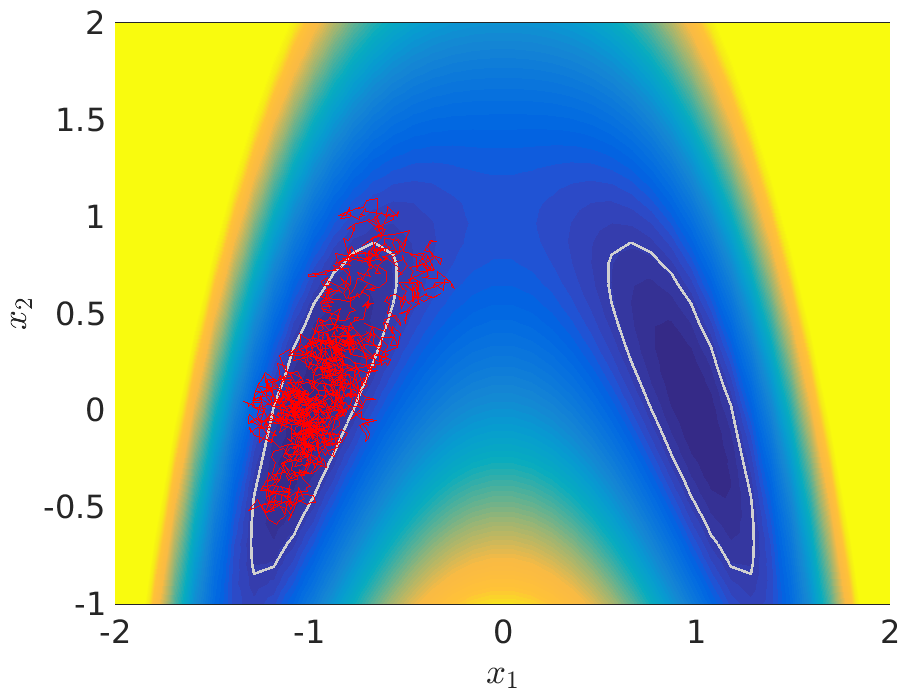}
        \end{minipage}
    \caption{a) Implied time scales of the double-well system. b) Trajectory of length~$t=1$.}
    \label{fig:bananatime scales}
\end{figure}

The low dimension of the system allows us to compute the reaction coordinate on a full regular grid over the phase space. We choose a $40 \times 30$ grid in the rectangular region $[-2,2] \times [-1,2]$ and denote the set of grid points by $\overline{\X}$. For this system, we expect a one-dimensional transition path and thus a one-dimensional reaction coordinate $\xi$. That is, $r=1$ and $2r+1=3$. Thus, we choose three linear observables in our embedding function~\eqref{eq:gammadefinition}, e.g.,
\begin{equation} \label{eq:bananaobservables}
    \begin{aligned}
    \eta_1(x_1,x_2) &=           -0.2630 \, x_1 - 0.3186 \, x_2, \\
    \eta_2(x_1,x_2) &=           -0.2246 \, x_1 + 0.0969 \, x_2, \\
    \eta_3(x_1,x_2) &= \phantom{-}0.1564 \, x_1 + 0.0783 \, x_2, \\
    \end{aligned}
\end{equation}
whose coefficients were drawn uniformly from $[-1,1]$. The expectation value in~\eqref{eq:xidefinition2} is approximated by a Monte Carlo quadrature using $M = 10^5$ sample trajectories for each grid point, cf.~\eqref{eq:Monte Carlo approximation}. The parameter $M$ was chosen such that the error in \eqref{eq:Monte Carlo approximation}, commonly defined as the variance of the Monte Carlo sum, is sufficiently low. The resulting embedding of the grid points $ x $ into $ \R^3 $ is shown in Figure~\ref{fig:bananaembedding}. The transition path seems to be already para\-metrized well by the individual components of $ \mathcal{E} \circ \overline{Q} $.

\begin{figure}[htbp]
    \centering
    \begin{minipage}{0.32\textwidth}
        \centering
        \subfiguretitle{$\big(\mathcal{E}_1\circ\overline{\mathcal{Q}}\big)(x)$}
        \includegraphics[width=\textwidth]{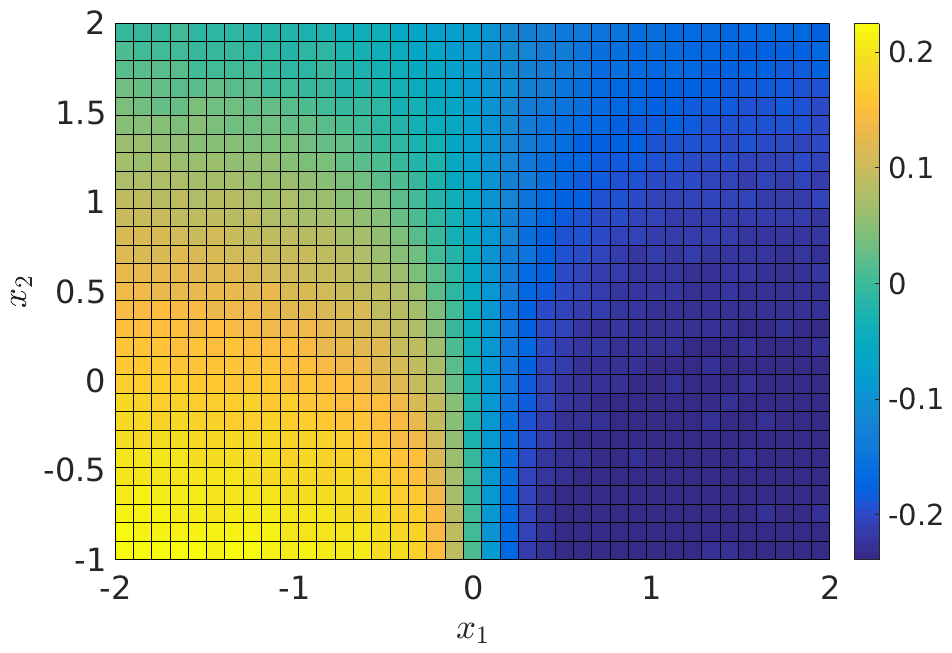}
    \end{minipage}
    \begin{minipage}{0.32\textwidth}
        \centering
        \subfiguretitle{$\big(\mathcal{E}_2\circ\overline{\mathcal{Q}}\big)(x)$}
        \includegraphics[width=\textwidth]{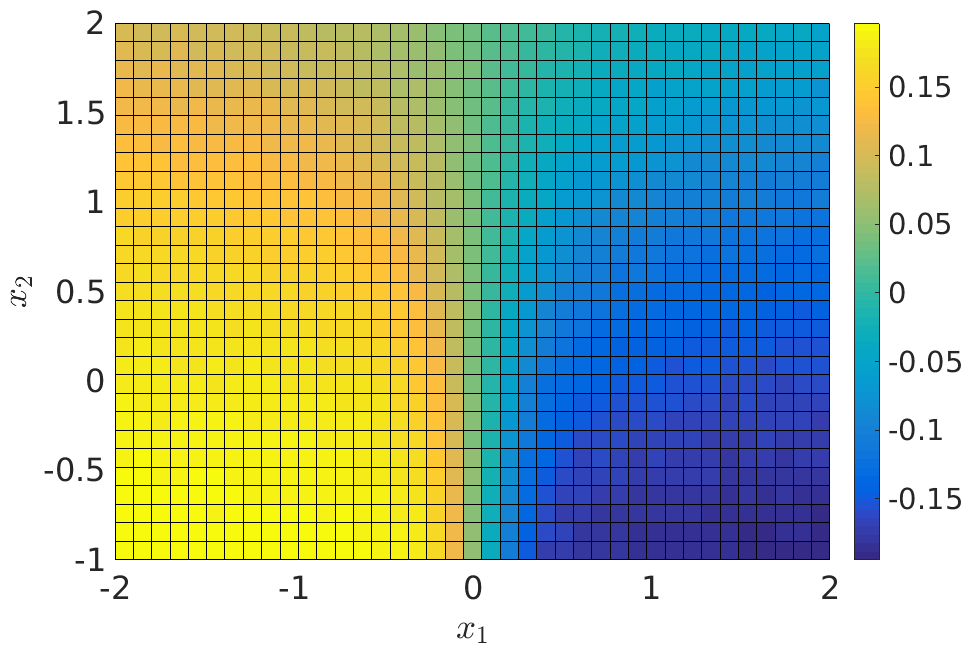}
    \end{minipage}
    \begin{minipage}{0.32\textwidth}
        \centering
        \subfiguretitle{$\big(\mathcal{E}_3\circ\overline{\mathcal{Q}}\big)(x)$}
        \includegraphics[width=\textwidth]{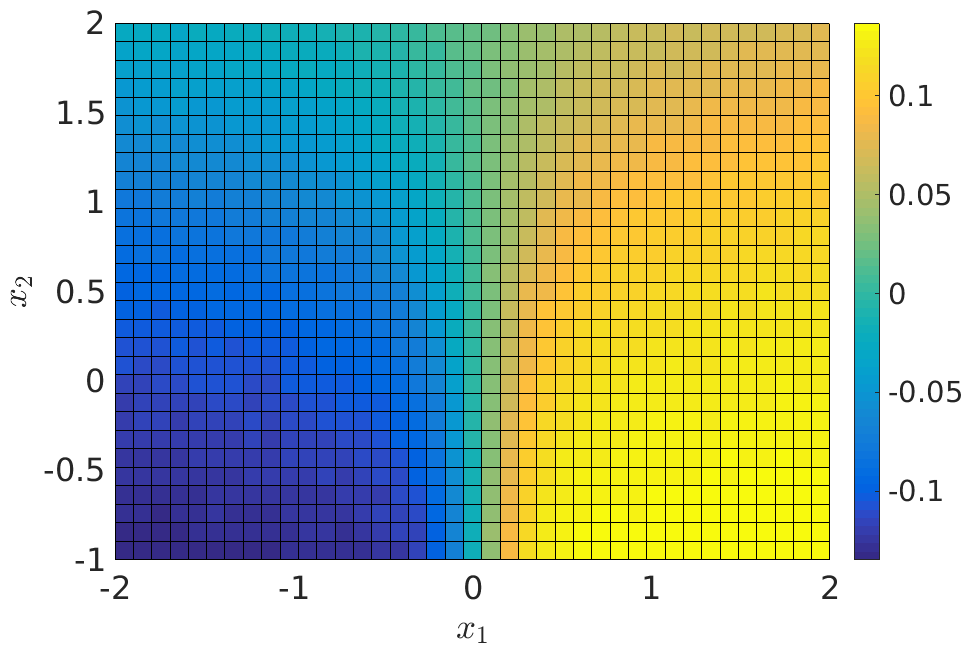}
    \end{minipage}    
    \caption{The individual components of the embedding $\mathcal{E}\circ\overline{\mathcal{Q}}$ on the grid points $x\in\overline{\mathbb{X}}$.}
    \label{fig:bananaembedding}
\end{figure}

For this example, the image of $\X$ under $ \mathcal{E} \circ \overline{\mathcal{Q}} $ should form a compact neighborhood of the one-dimensional manifold $\mathcal{E}(\mathbb{M})$, as described in Section~\ref{sec:NumApprox}. The one-dimensional structure in $\mathcal{E}\big(\overline{\mathcal{Q}}(\overline{\X})\big)$ is clearly visible, see Figure~\ref{fig:bananadiffusionmaps}a. To identify the one-dimensional coordinate along this set the diffusion map algorithm is used. Let $\Psi_1:\big(\mathcal{E}\circ\overline{\mathcal{Q}})(\overline{\X}) \rightarrow \R$ denote the first diffusion map coordinate on the embedded grid points, also visualized in Figure~\ref{fig:bananadiffusionmaps}a. The final reaction coordinate, shown in Figure~\ref{fig:bananadiffusionmaps}b, is then given by
\begin{equation*}
    \overline{\xi}(x) := \Psi_1\big(\big(\mathcal{E}\circ\overline{\mathcal{Q}}\big)(x)\big),\quad x\in\overline{\X}.
\end{equation*}

\begin{figure}[htbp]
    \begin{minipage}{0.49\textwidth}
        \centering
        \subfiguretitle{a)}
        \includegraphics[width=\textwidth]{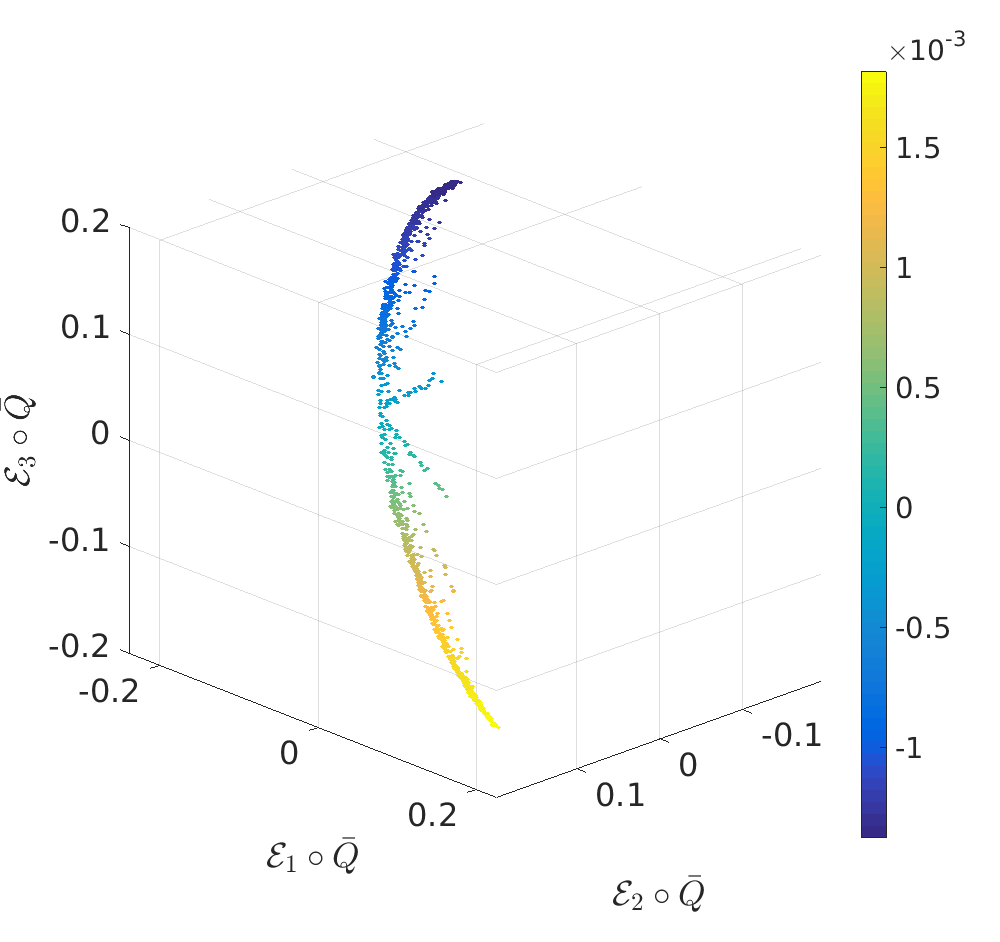}
    \end{minipage}
    \begin{minipage}{0.49\textwidth}
        \centering
        \subfiguretitle{b)}
        \includegraphics[width=\textwidth]{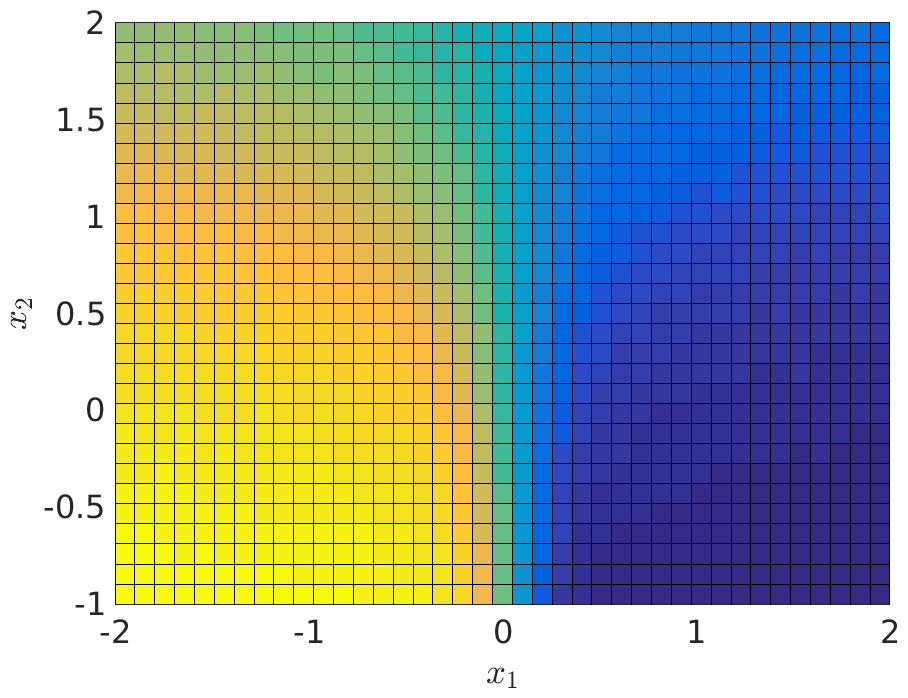}
    \end{minipage}
    \caption{a) The embedded grid points colored according to the first diffusion map coordinate. b) The final reaction coordinate $\overline{\xi}$.}
    \label{fig:bananadiffusionmaps}
\end{figure}

Legoll and Leli\`evre~\cite{LeLe10} show that the effective dynamics based on the reaction coordinate
\begin{equation*}
    \xi^*(x) = x_1\exp(-2x_2)
\end{equation*}
accurately reproduces the long-time dynamics of the full process --- although they do not use dominant eigenvalues of the transfer operator in their argumentation. It is easy to verify that the level sets of $\xi^*$ traverse the transition path orthogonally. Figure~\ref{fig:bananaLevelsets} compares the level sets of $\overline{\xi}$ and $\xi^*$. While the two reaction coordinates have different absolute values, their contour lines coincide well. As the projection operator $P_\xi$ only depends on the level sets of $\xi$, the projected transfer operators $\mathcal{T}^t_{\overline{\xi}}$ and $\mathcal{T}^t_{\xi^*}$ should be similar as well.

\begin{figure}[htbp]
    \centering
    \begin{minipage}{0.49\textwidth}
        \centering
        \subfiguretitle{$\overline{\xi}$}
        \includegraphics[width=\textwidth]{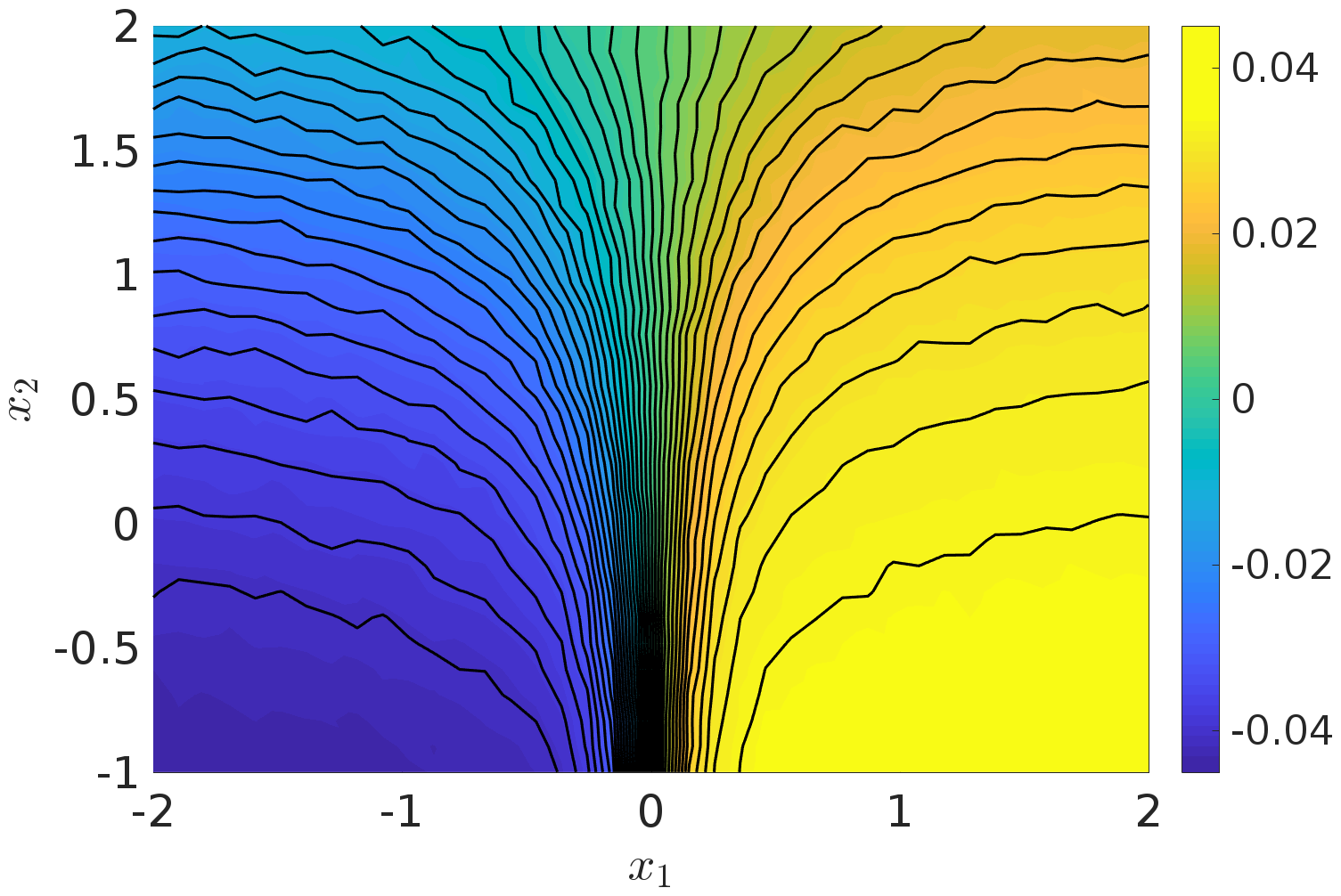}
    \end{minipage}
    \begin{minipage}{0.49\textwidth}
        \centering
        \subfiguretitle{$\xi^*$}
        \includegraphics[width=\textwidth]{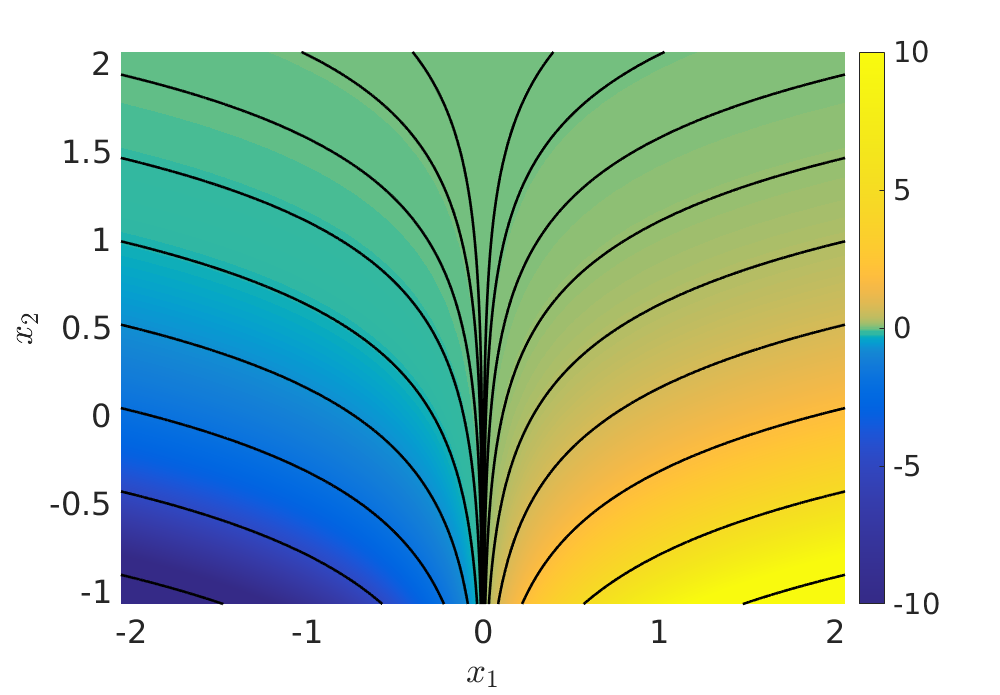}
    \end{minipage}
    \caption{Selected contour lines (black) of the newly identified reaction coordinate $\overline{\xi}$ and the reference reaction coordinate $\xi^*$.}
    \label{fig:bananaLevelsets}
\end{figure}

\paragraph{Projected eigenvalue error.}
To conclude this example, we compute the dominant spectrum of the projected transfer operator and compare it to the spectrum of the full transfer operator. To discretize $\mathcal{T}^t_{\overline{\xi}}$, we use a simple Ulam-type discretization scheme based on a long equilibrated trajectory of the full dynamics. Recall from Section~\ref{ssec:coord proj} that, although~$\mathcal{T}^t_\xi$ formally acts as an operator on functions over~$\X$, it is constant along level sets of~$\overline{\xi}$, and thus can be treated as an operator on functions over~$\mathbb{R}^r$. For completeness, we state the rough outline of an algorithm that we used to approximate~$\mathcal{T}^t_{\overline{\xi}}$.
An introduction to Ulam- and other Galerkin-type discretization schemes for transfer operators can be found, e.g., in~\cite{KKS16}.

\begin{enumerate}
\item Compute points~$\overline{\X} := \{\mathbf{\Phi}_{(k\tau)}x_0~|~k=1,\ldots,N\}$, a discrete trajectory with step size $\tau$ of the full phase space dynamics that adequately samples the invariant density~$\varrho$.
\item Compute the reaction coordinate~$\overline{\xi}$ on the points~$\overline{\X}$.
\item Divide the neighborhood of $\overline{\xi}(\overline{\X})$ into boxes or other suitable discretization elements $\{\mathbb{A}_1,\ldots,\mathbb{A}_N\}$ and sample the boxes from the trajectory, i.e. compute
$$
\overline{\X}_i := \{x\in\overline{\mathbb{X}}~|~\bar{\xi}(x) \in \mathbb{A}_i\}~.
$$
\item Count the time-$t$-transitions within $\overline{\X}$ between the boxes (where $t$ is a multiple of $\tau$), i.e.~compute the matrix
$$
\big(T^t_{\overline{\xi}}\big)_{ij} := \#\big\{x\in \overline{\X}_i~|~\mathbf{\Phi}_tx \in \overline{\X}_j\big\}~.
$$
\item After row-normalization, the eigenvalues of $T^t_{\overline{\xi}}$ approximate the point spectrum of~$\mathcal{T}^t_{\overline{\xi}}$.
\end{enumerate}

\begin{remark}
Note that the equilibrated trajectory $\overline{\mathbb{X}}$ is typically unavailable for more complex systems. In practice, one would replace steps 1 and 2 by directly computing a reduced trajectory $\overline{\mathbb{Z}}=\{z_1,\ldots,z_N\}\subset\mathbb{R}^r$ whose statistics approximate that of $\xi\big(\overline{\mathbb{X}}\big)$. The formulation of a reduced numerical integration scheme to realize this is currently work in progress (see the fifth point in the conclusions).
\end{remark}

For our example system, we compute $\overline{\X}$ as a $N=10^6$ step trajectory with step size $\tau=10^{-2}$ using the Euler-Maruyama scheme. However, to reduce the numerical effort, $\overline{\xi}$ is computed only on a subsample of $\overline{\mathbb{X}}$ ($10^4$ points) and extended to $\overline{\mathbb{X}}$ by nearest-neighbor interpolation. On $\overline{\X}$, the image of the $\overline{\xi}$ is contained in the interval $[-0.04,0.04]$, which we discretize into $M=40$ subintervals of equal length. The spectrum of the full transfer operator $\mathcal{T}^t$ was computed using the standard Ulam method over a $40\times 30$ uniform box discretization of the domain $[-2,2]\times[-1,2]$. With the choice $t=1$ for the lag time, the spectral gap is clearly visible.

We observe in Figure \ref{fig:BananaReducedEigs} that the eigenvalues of $\mathcal{T}^t_{\overline{\xi}}$ and $\mathcal{T}^t$ are in excellent agreement. Not only the dominant eigenvalues $\lambda_0,\lambda_1$ are approximated well (as predicted by Lemma \ref{lem:EVapprox}), but also the further subdominant eigenvalues that are not covered by our theory. In particular, the reaction coordinate $\overline{\xi}$ provides a better approximation to the spectrum of $\mathcal{T}^t$ than other, manually chosen reaction coordinates: Figure \ref{fig:BananaReducedEigs} also shows the eigenvalues of the projected transfer operator associated with the reaction coordinates 
$$
\zeta_1(x):=x_1\quad\text{and}\quad \zeta_2(x) := x_1+x_2.
$$
We see that these are consistently outperformed by the computed reaction coordinate $\overline{\xi}$ (although it appears that $\zeta_1$ already is quite a good reaction coordinate).

\begin{figure}[htbp]
    \centering
    \begin{minipage}{0.59\textwidth}
        \centering
        \subfiguretitle{a)}
            \includegraphics[width=\textwidth]{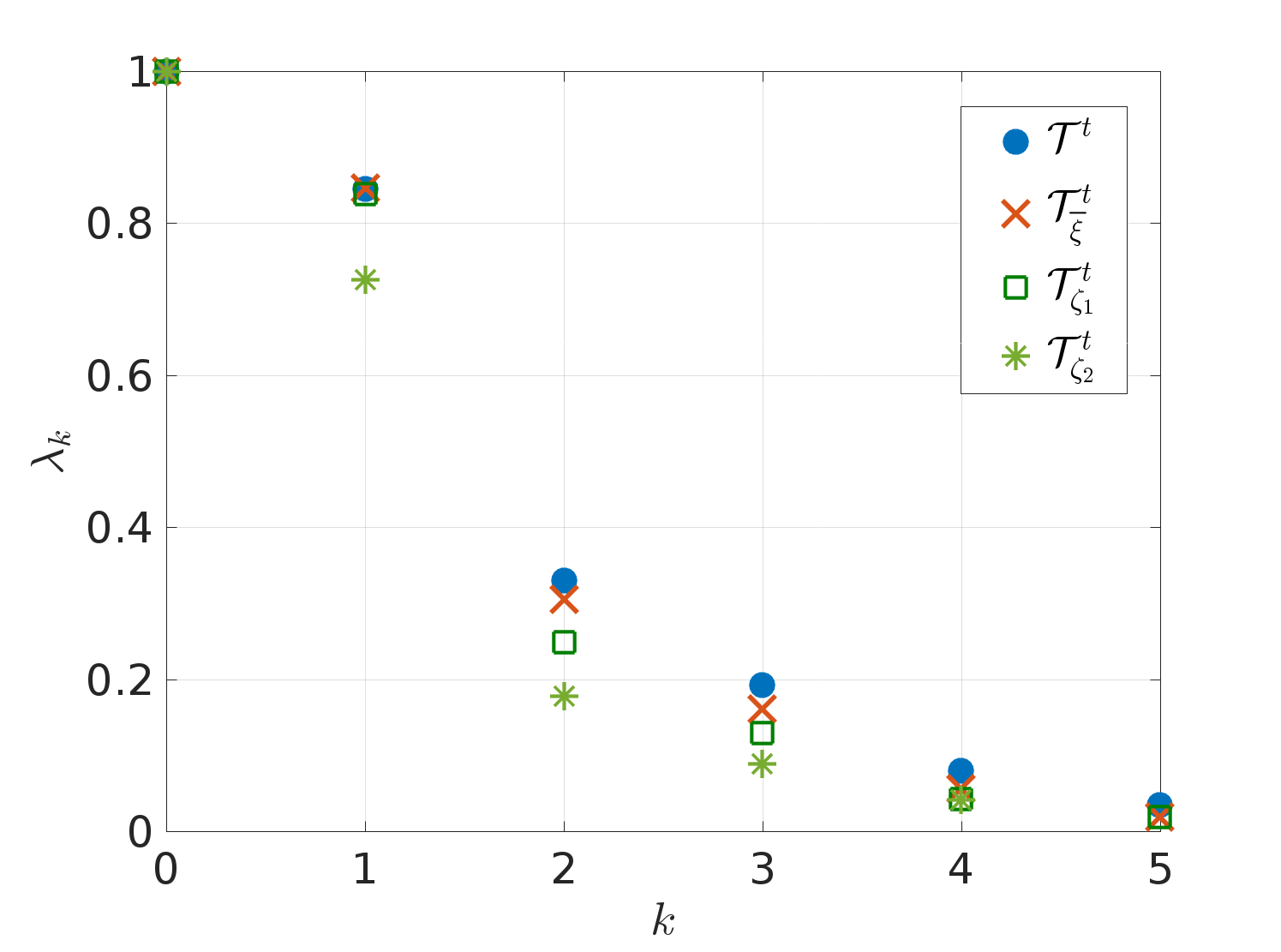}
    \end{minipage}             
    \begin{minipage}{0.39\textwidth}
    \centering
    \subfiguretitle{b)}
    {\renewcommand{\arraystretch}{1.3}
    \begin{tabular}{ c | c}
    & $\lambda_1$\\
    \hline
    $\mathcal{T}^t$ & 0.8503 \\
    $\mathcal{T}^t_{\overline{\xi}}$ & 0.8461 \\
    $\mathcal{T}^t_{\zeta_1}$ & 0.8377 \\
    $\mathcal{T}^t_{\zeta_2}$ & 0.7252 
    \end{tabular}
    }
    \end{minipage}
    \caption{a) Comparison of the two dominant and first four non-dominant eigenvalues of the full transfer operator $\mathcal{T}^t$ and the projected transfer operators $\mathcal{T}^t_{\overline{\xi}},\mathcal{T}^t_{\zeta_1},\mathcal{T}^t_{\zeta_2}$. b) Detailed comparison of the second eigenvalue of the various transfer operators.}
    \label{fig:BananaReducedEigs}
\end{figure}

\subsection{Circular potential}
\label{sec:Lemon Slice}

Let us now compute the reaction coordinates for the multi-well diffusion process described in Example~\ref{ex:LemonSlice}. The corresponding $ k $-well potential is defined as
\begin{equation*}
     V(x) = \cos\left(k \, \arctan(x_2, x_1)\right) + 10 \left(\sqrt{x_1^2 + x_2^2} - 1\right)^2.
\end{equation*}
We use $ k = 7 $, for which the potential is shown in Figure~\ref{fig:minimalreactioncoordinate}a. The potential as well as the dominant eigenvalues of the corresponding transfer operator clearly indicate the existence of seven metastable sets, yet a typical longtime trajectory, shown in Figure~\ref{fig:lemontrajectory}a, suggests a one-dimensional transition path, the unit circle $\mathbb{B}_1$. We demonstrate that with our method, a reaction coordinate of minimal dimension can be computed.

\begin{figure}
    \centering
    \begin{minipage}{0.45\textwidth}
            \centering
            \subfiguretitle{a)}
            \includegraphics[width=\textwidth]{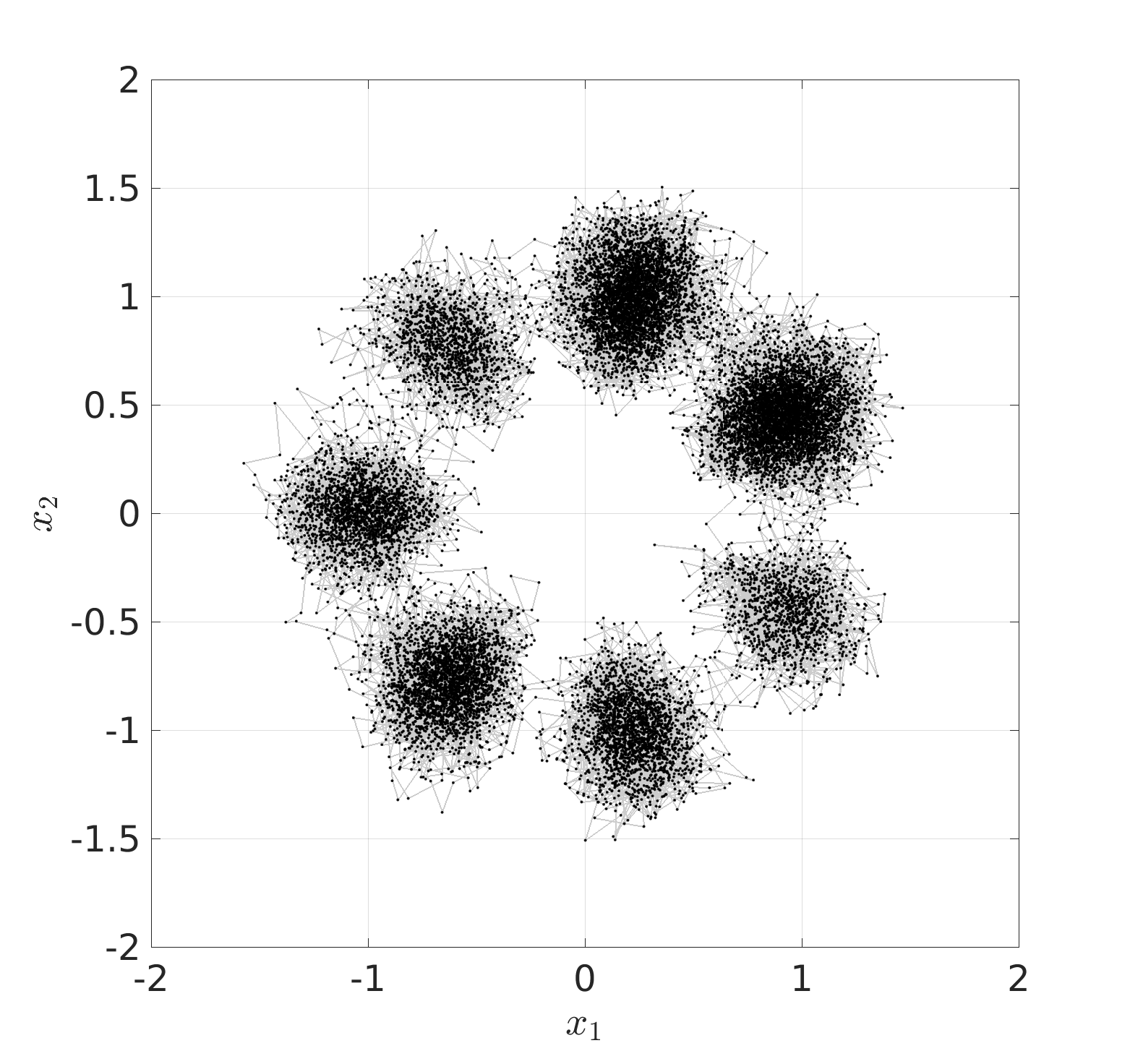}
    \end{minipage}
    \begin{minipage}{0.45\textwidth}
            \centering
            \subfiguretitle{b)}
            \includegraphics[width=\textwidth]{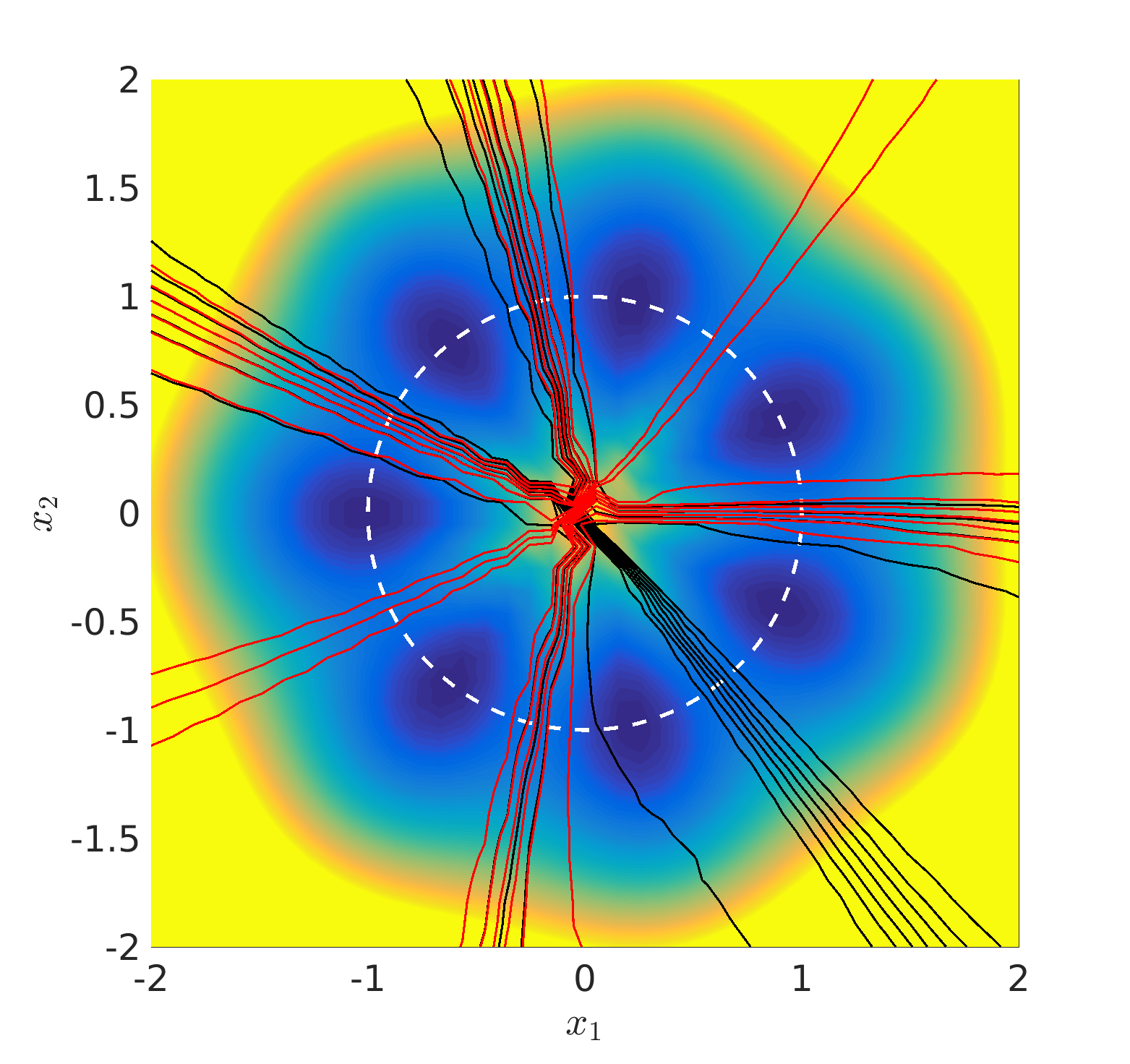}
    \end{minipage}    
    \caption{a) Longtime trajectory of the diffusion process with the circular seven-well potential. b) The contour lines of $\overline{\xi}_1$ (black) and $\overline{\xi}_2$ (red) show that $\overline{\xi}$ is almost constant on the metastable sets, but resolves the transition regions well.}
    \label{fig:lemontrajectory}
\end{figure}

We again choose the inverse temperature $\beta=0.5$ and perform the same analysis as in the previous subsection. For this system, a time scale gap between $t_6 \approx 1.53$ and $t_7 \approx 0.05$ can be found. We thus choose the intermediate time scale $ t = 0.1 $. Since we again expect a one-dimensional transition path, the three observables~\eqref{eq:bananaobservables} are used for the embedding of $\mathbb{M}$. We use the grid points of a $40 \times 40$ grid, denoted again by $\overline{\X}$, over the region $[-2,2]\times [-2,2]$ as our test points.

The individual components of the embedding $\mathcal{E}\circ\overline{\mathcal{Q}}$ are shown in Figure~\ref{fig:lemonembedding}. The embedded grid points, seen as the individual points in Figure~\ref{fig:lemondiffusionmaps}a, seem to concentrate around a one-dimensional circular manifold and thus reveal the one-dimensional nature of the reaction coordinate. Although slightly unintuitive, the diffusion maps algorithm now identifies \emph{two} significant diffusion map components, as shown in Figure~\ref{fig:lemondiffusionmaps}a. The reason is that the circular manifold cannot be embedded into $\R^1$, so that a two-component coordinate is necessary to parametrize it. Figure \ref{fig:lemontrajectory}b shows some contour lines (of equidistant values) of the two components of $\overline{\xi}$. We see that $\overline{\xi}$ is almost constant on the seven metastable sets, but resolves the transition regions well.

\begin{figure}[htbp]
    \centering
    \begin{minipage}{0.32\textwidth}
        \centering
        \subfiguretitle{$\big(\mathcal{E}_1\circ\overline{\mathcal{Q}}\big)(x)$}
        \includegraphics[width=\textwidth]{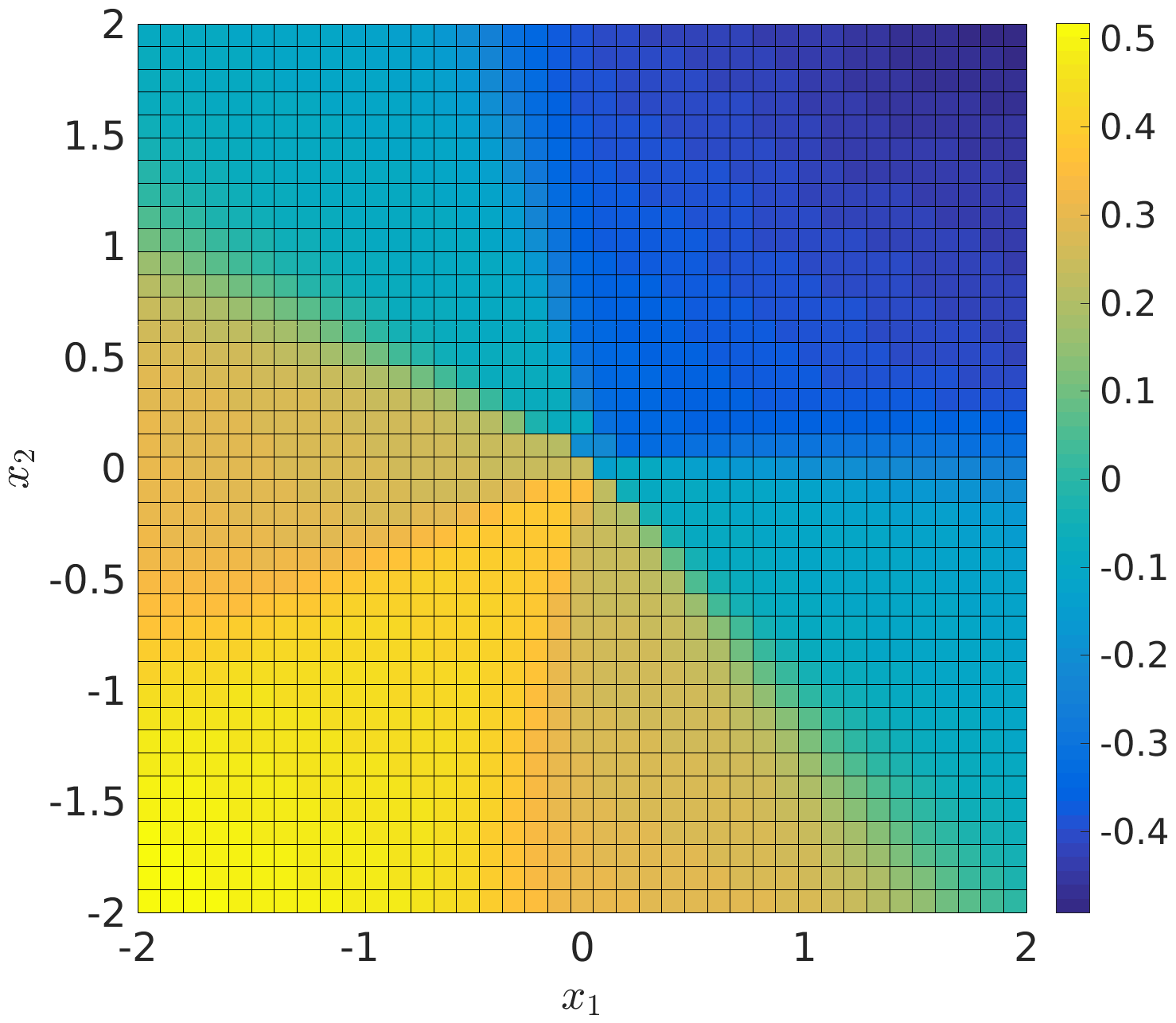}
    \end{minipage}
    \begin{minipage}{0.32\textwidth}
        \centering
        \subfiguretitle{$\big(\mathcal{E}_2\circ\overline{\mathcal{Q}}\big)(x)$}
        \includegraphics[width=\textwidth]{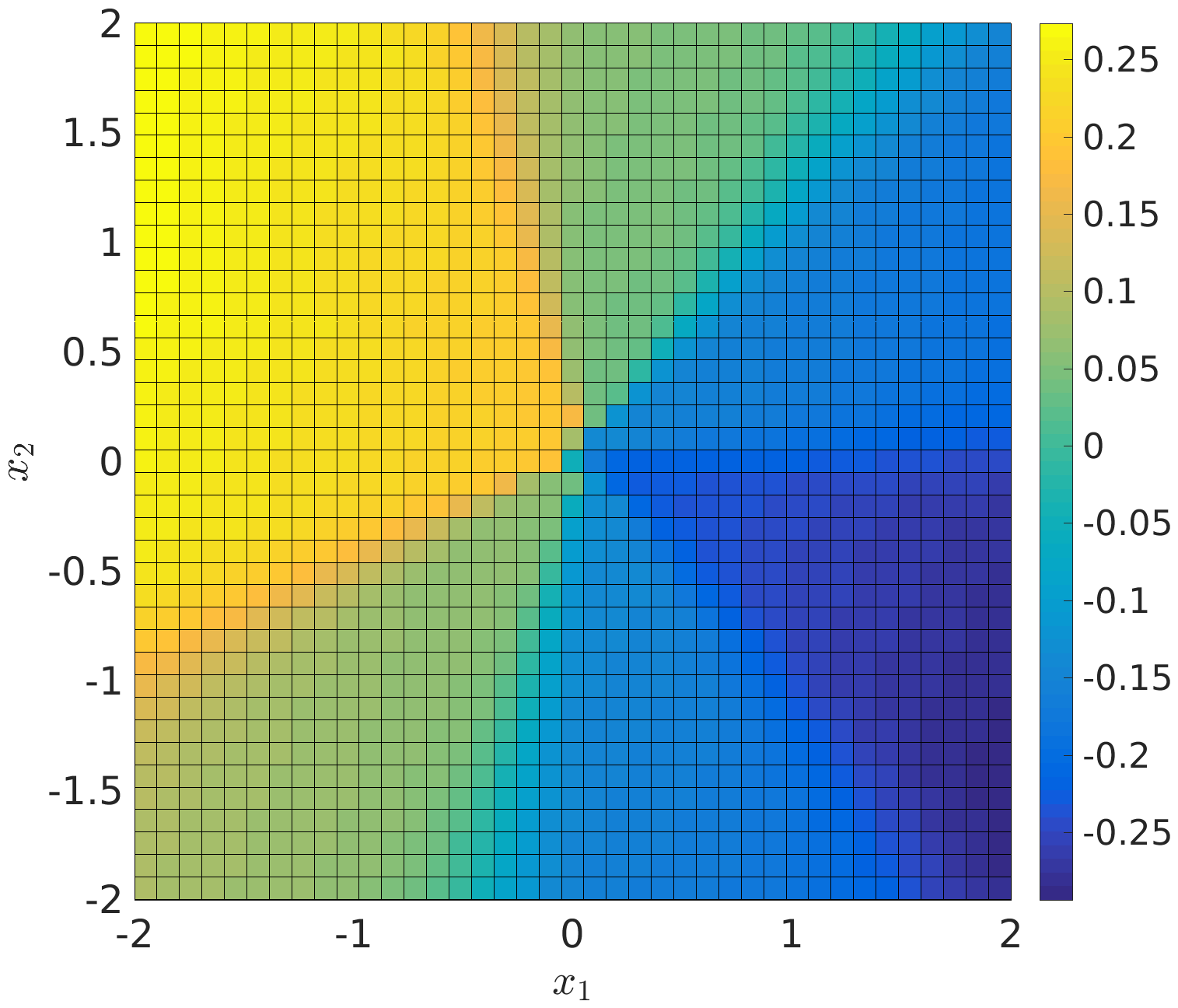}
    \end{minipage}
    \begin{minipage}{0.32\textwidth}
        \centering
        \subfiguretitle{$\big(\mathcal{E}_3\circ\overline{\mathcal{Q}}\big)(x)$}
        \includegraphics[width=\textwidth]{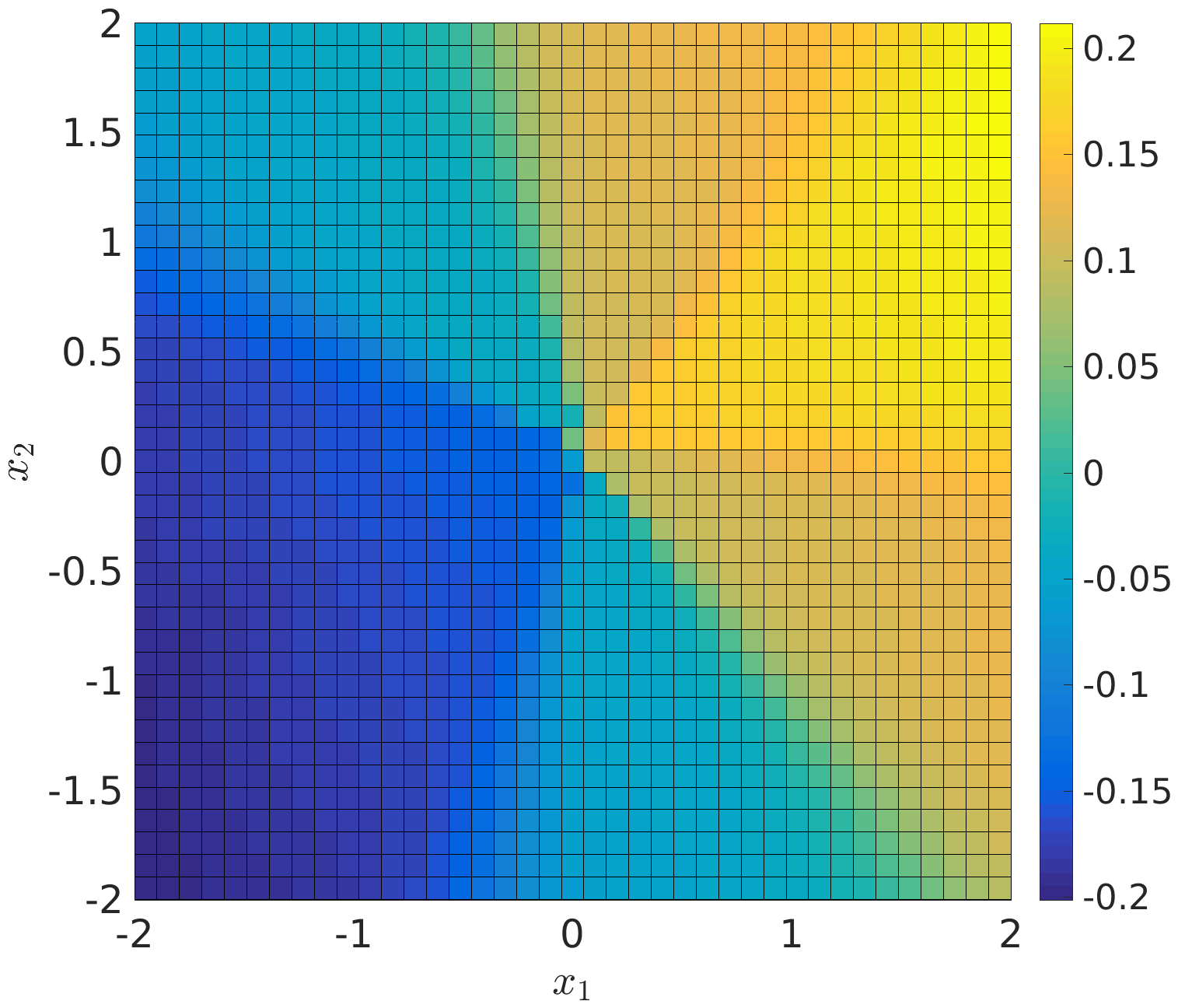}
    \end{minipage}    
    \caption{The individual components of the embedding $\mathcal{E}\circ\overline{\mathcal{Q}}$ on the grid points $x\in\overline{\mathbb{X}}$. }
    \label{fig:lemonembedding}
\end{figure}

\begin{figure}[htb]
    \centering
    \begin{minipage}{0.4\textwidth}
        \centering
        \subfiguretitle{a)}
        \includegraphics[width=\textwidth]{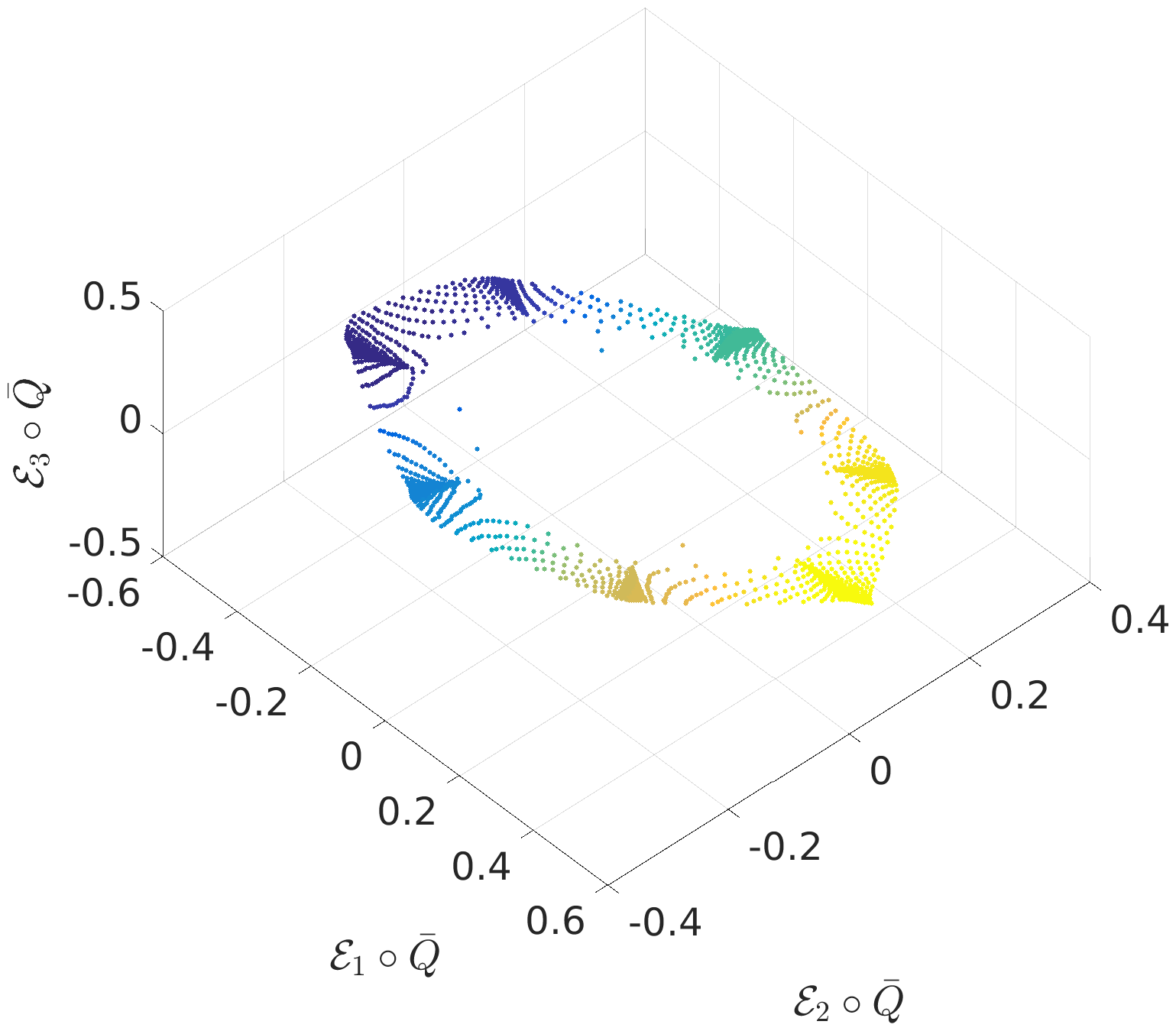}
    \end{minipage}
    \begin{minipage}{0.4\textwidth}
        \centering
        \subfiguretitle{b)}
        \includegraphics[width=\textwidth]{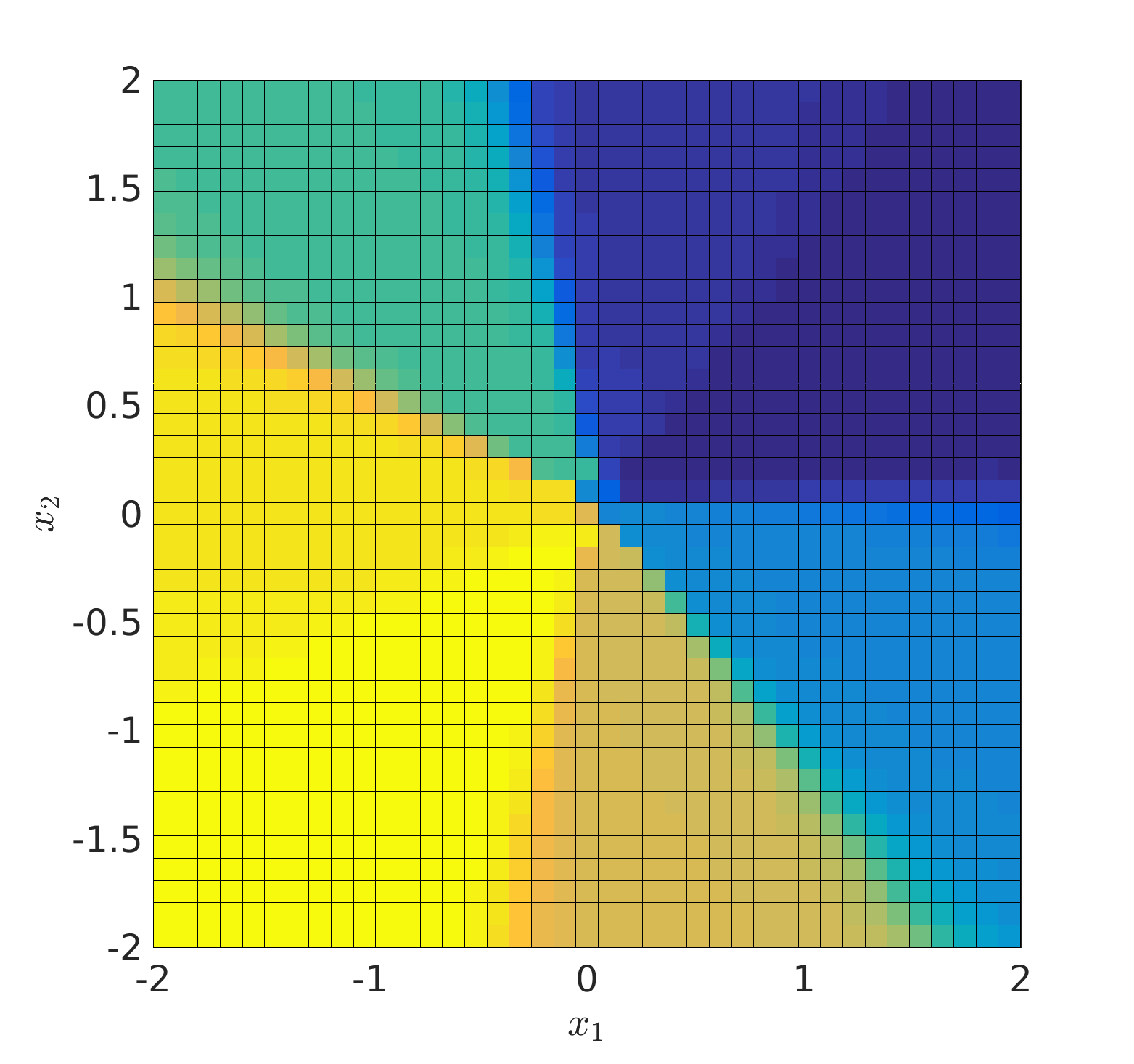}
    \end{minipage}
    
	\begin{minipage}{0.4\textwidth}
        \centering
        \subfiguretitle{c)}
        \includegraphics[width=\textwidth]{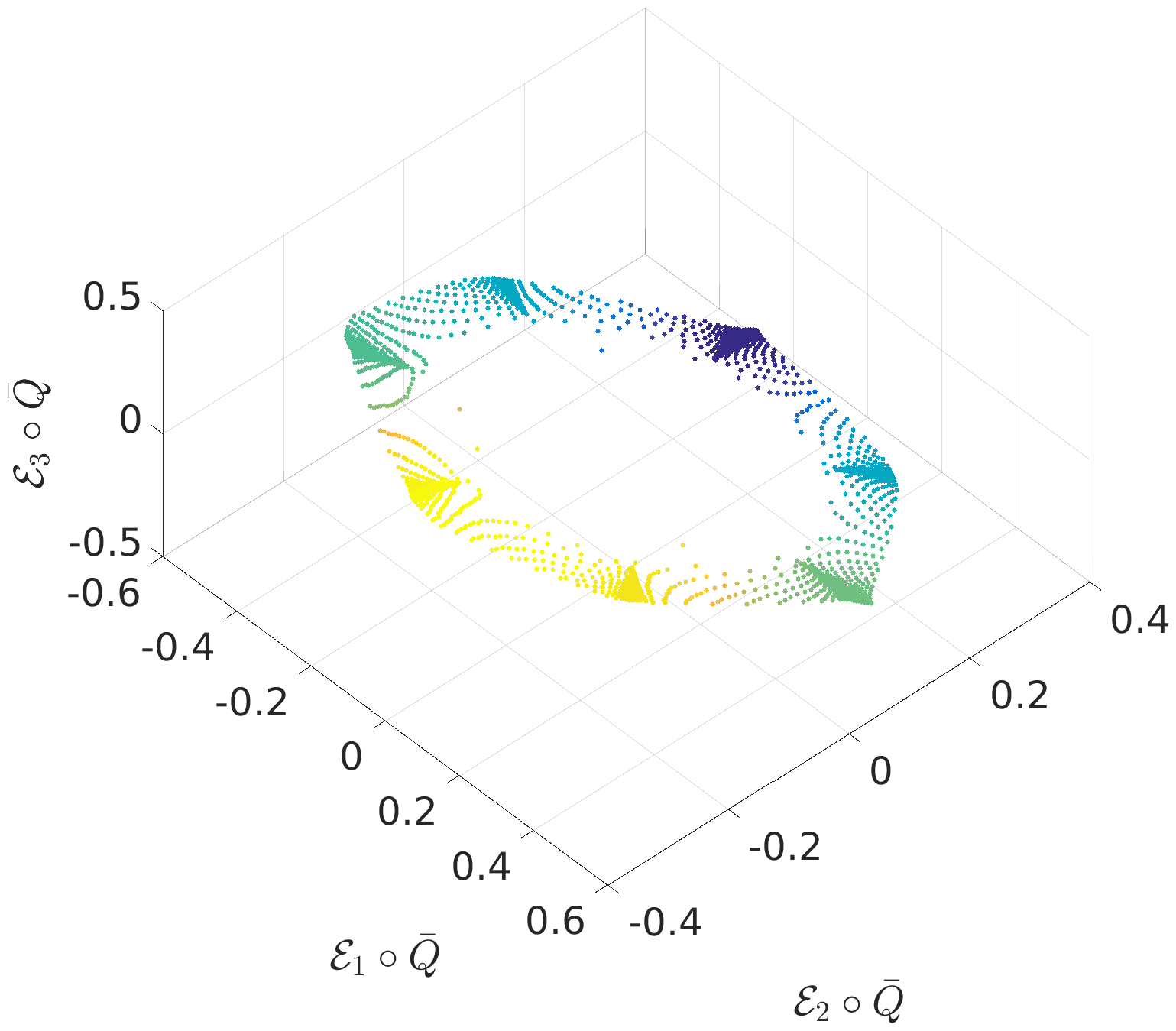}
    \end{minipage}
    \begin{minipage}{0.4\textwidth}
        \centering
        \subfiguretitle{d)}
        \includegraphics[width=\textwidth]{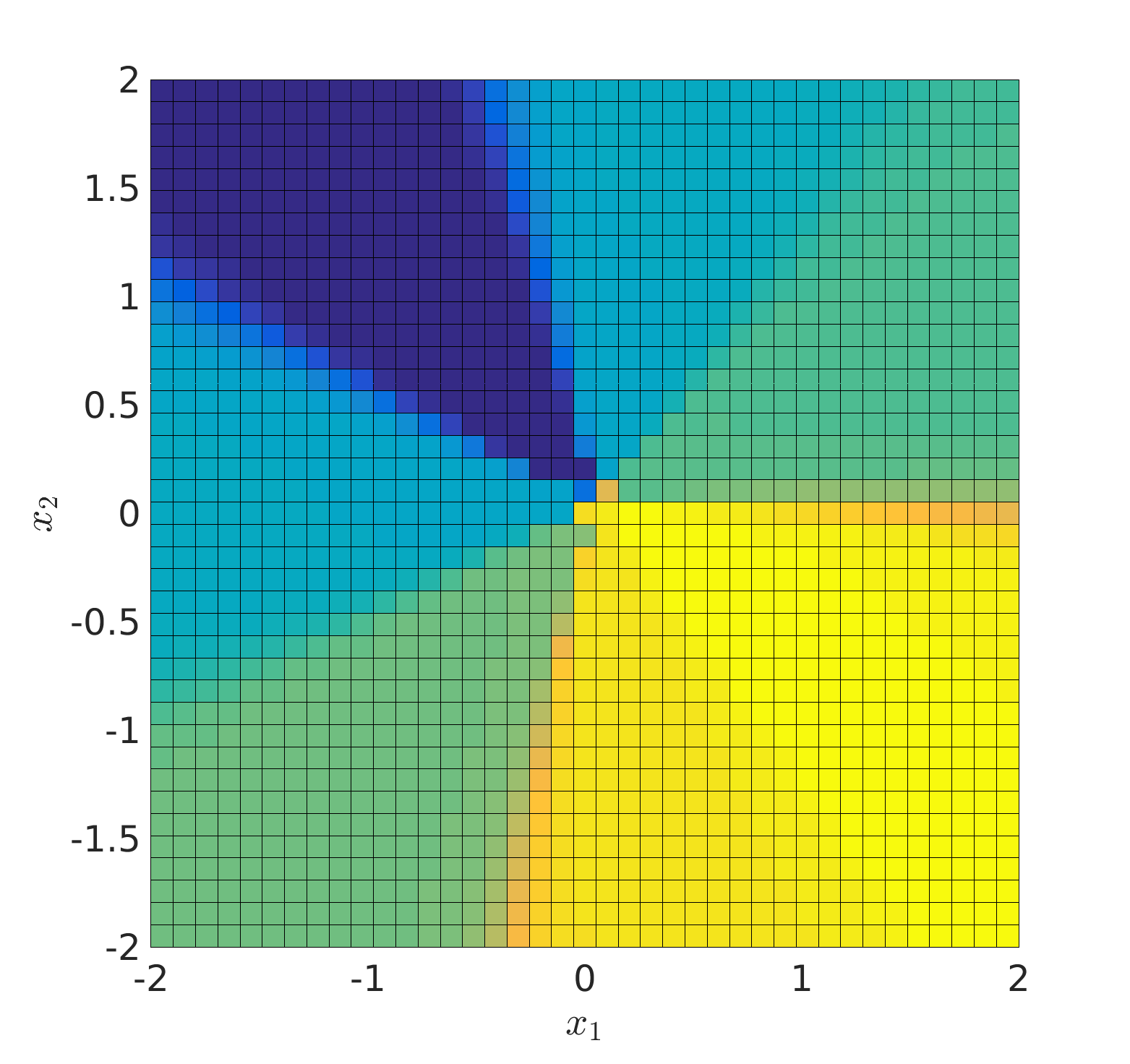}
    \end{minipage}
\caption{Left column: The embedded grid points $\mathcal{E}(\overline{\mathcal{Q}}(\overline{\mathbb{X}}))$. The coloring shows the a) first and c) second significant diffusion map on the points. Right column: The b) first and d) second components of the final reaction coordinate $\overline{\xi}$.}
\label{fig:lemondiffusionmaps}
\end{figure}

\paragraph{Parametrization of the dominant eigenfunctions.}
Next, we experimentally investigate how well the dominant eigenfunctions $\varphi_i$ of $\mathcal{T}^t$ can be parametrized by the numerically computed reaction coordinate $\overline{\xi}$. If the eigenfunctions are almost functions of~$\overline{\xi}$, then by Lemma~\ref{lem:parametrizableeigenfunctions1} and Corollary~\ref{cor:characRC} the reaction coordinate is suitable to reproduce \emph{all the dominant time scales}. To this end, we compute the dominant eigenfunctions~$\varphi_j$, $j=0,\ldots,d$ by the Ulam-type Galerkin method (as in the previous example), and plot~$\varphi_j(x_i)$ against~$\overline{\xi}(x_i)$. Note that due to the reasons discussed above, the range of~$\overline{\xi}$ is a one-dimensional manifold in~$\mathbb{R}^2$.
If $\varphi_j$ can be parametrized by $\bar{\xi}$, we expect that~$\varphi_j(x_{i_1}) \approx \varphi_j(x_{i_2})$, whenever~$\bar{\xi}(x_{i_1}) \approx \bar{\xi}(x_{i_2})$.
 The result is shown in Figure~\ref{fig:functionaldependency}. We clearly see the functional dependency of the first seven (i.e., the dominant) eigenfunctions on the reaction coordinate.

\begin{figure}
	\begin{minipage}{0.32\textwidth}
        \centering
        \subfiguretitle{$\varphi_0 \text{ vs. }\bar{\xi} $}
        \includegraphics[width=\textwidth]{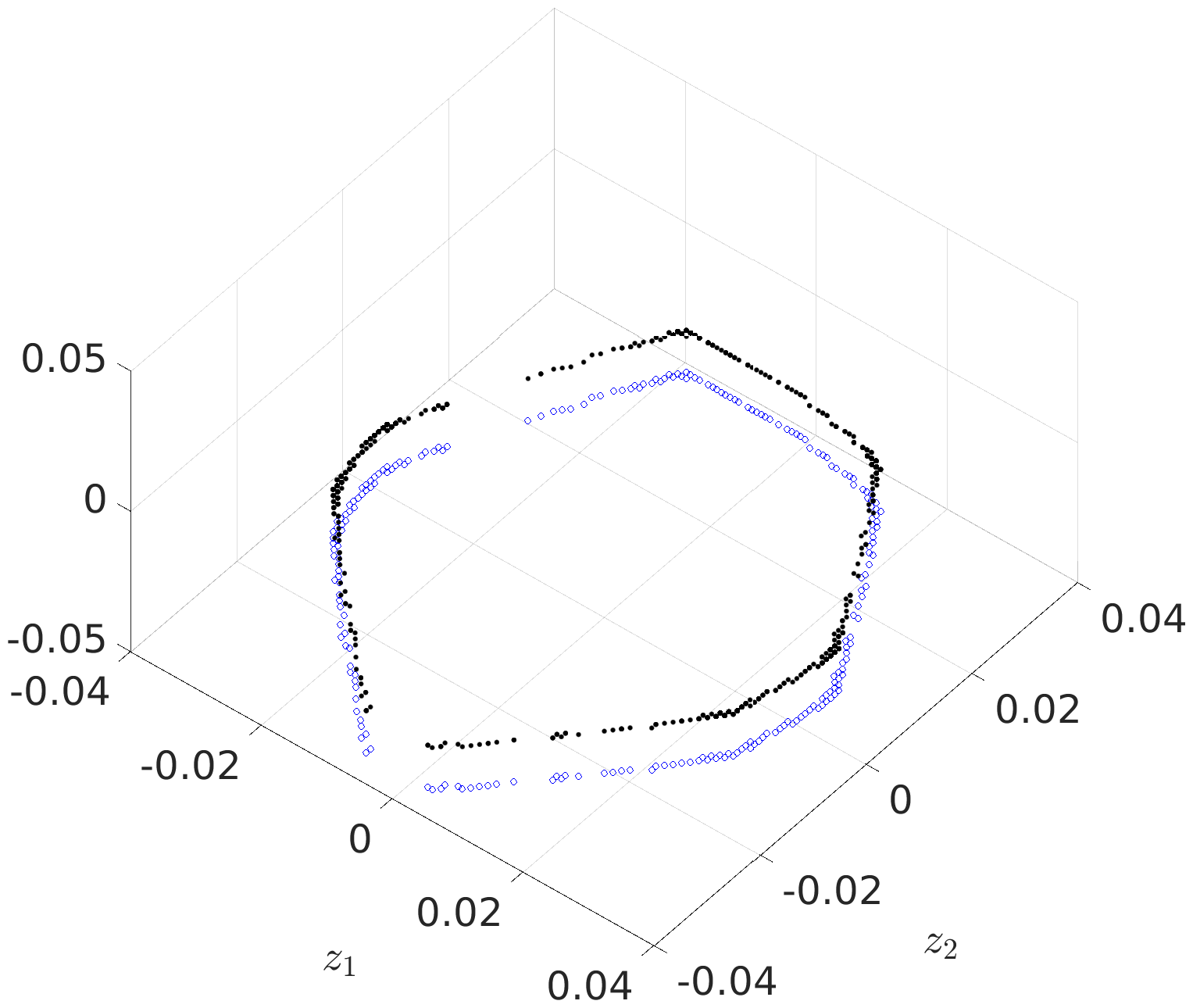}
    \end{minipage}
    \begin{minipage}{0.32\textwidth}
        \centering
        \subfiguretitle{$\varphi_1 \text{ vs. }\bar{\xi} $}
        \includegraphics[width=\textwidth]{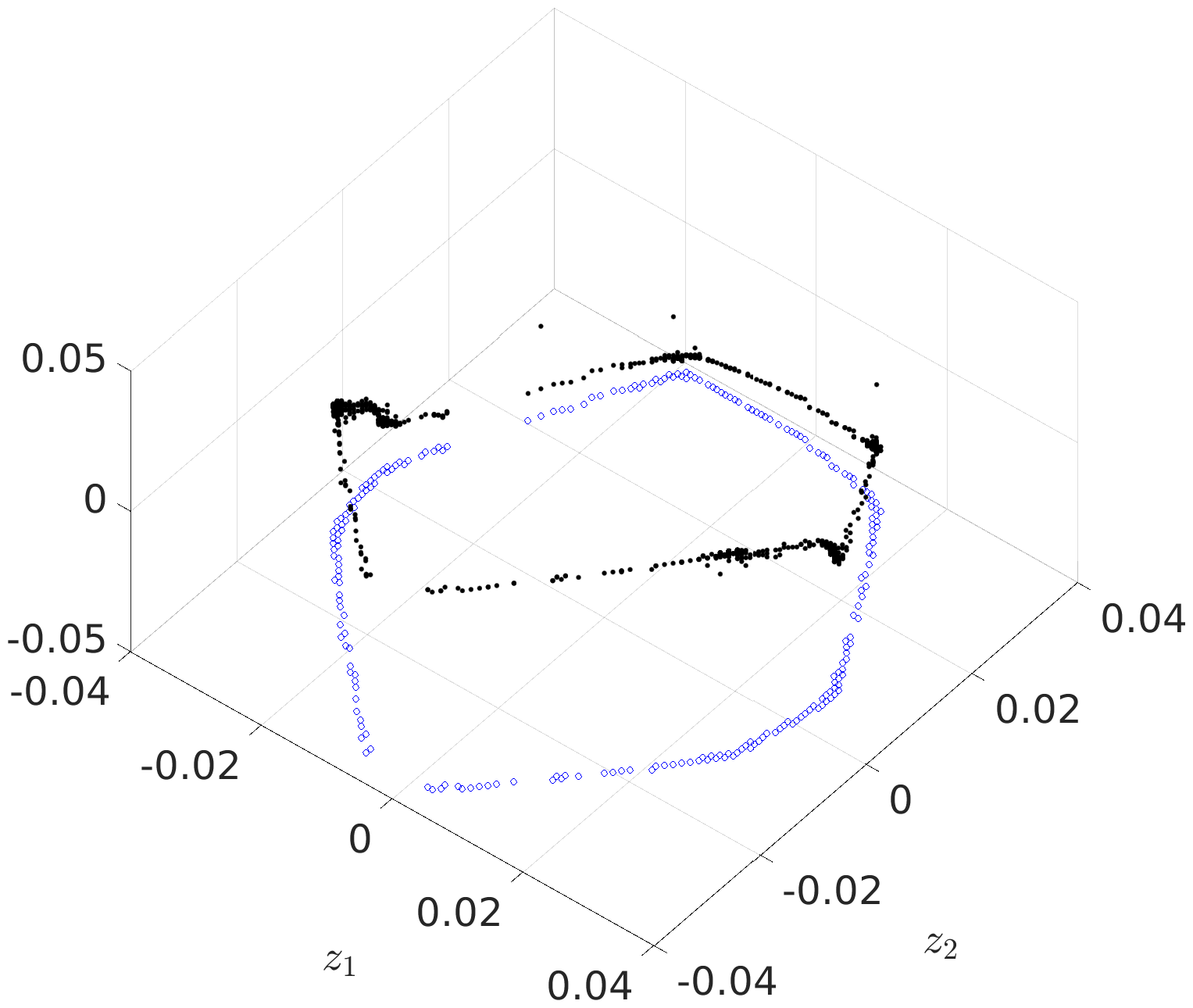}
    \end{minipage}
    \begin{minipage}{0.32\textwidth}
        \centering
        \subfiguretitle{$\varphi_2 \text{ vs. }\bar{\xi} $}
        \includegraphics[width=\textwidth]{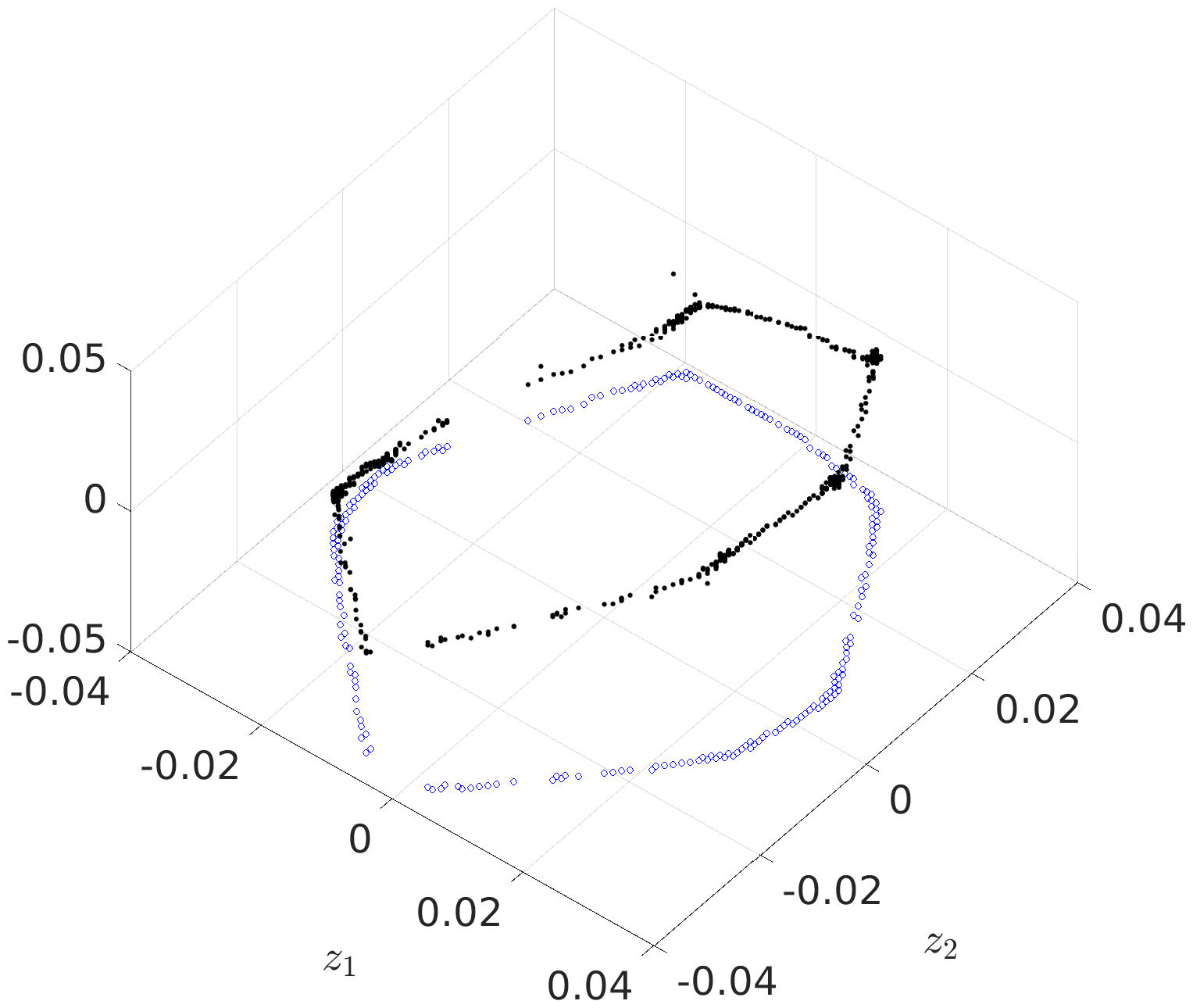}
    \end{minipage}
    \vspace{0.7cm}
    
	\begin{minipage}{0.32\textwidth}
        \centering
        \subfiguretitle{$\varphi_3 \text{ vs. }\bar{\xi} $}
        \includegraphics[width=\textwidth]{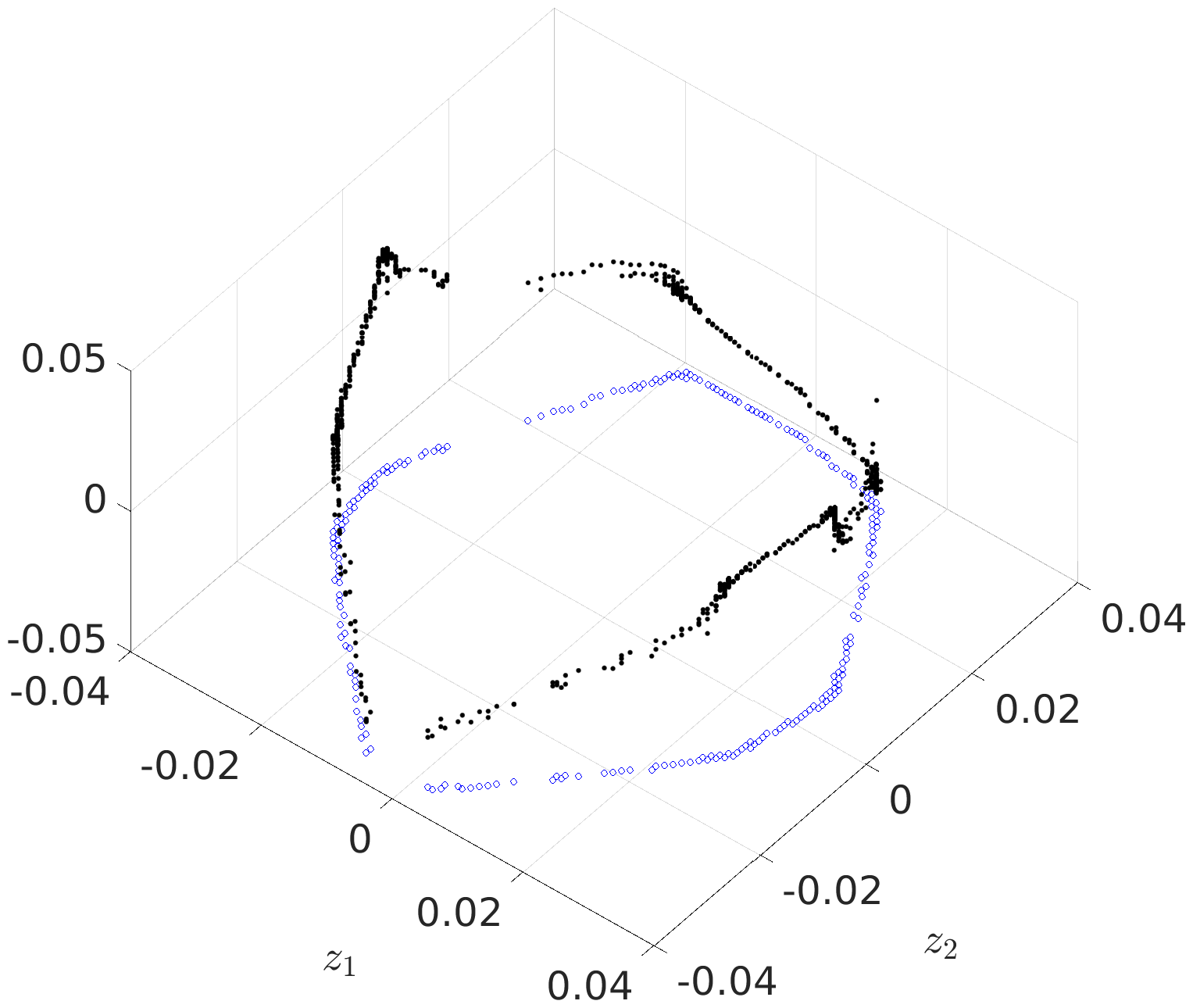}
    \end{minipage}
    \begin{minipage}{0.32\textwidth}
        \centering
        \subfiguretitle{$\varphi_4 \text{ vs. }\bar{\xi} $}
        \includegraphics[width=\textwidth]{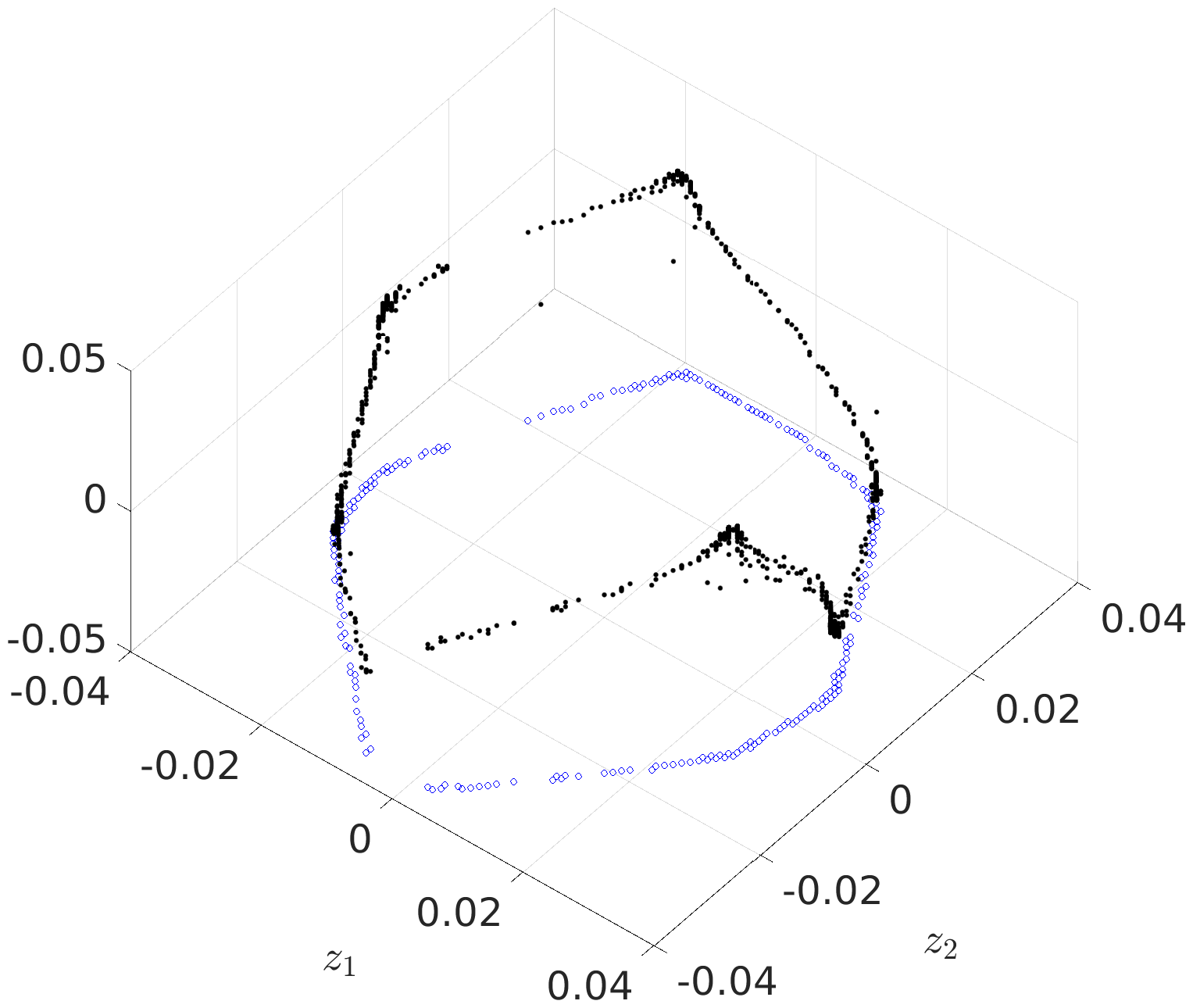}
    \end{minipage}
    \begin{minipage}{0.32\textwidth}
        \centering
        \subfiguretitle{$\varphi_5 \text{ vs. }\bar{\xi} $}
        \includegraphics[width=\textwidth]{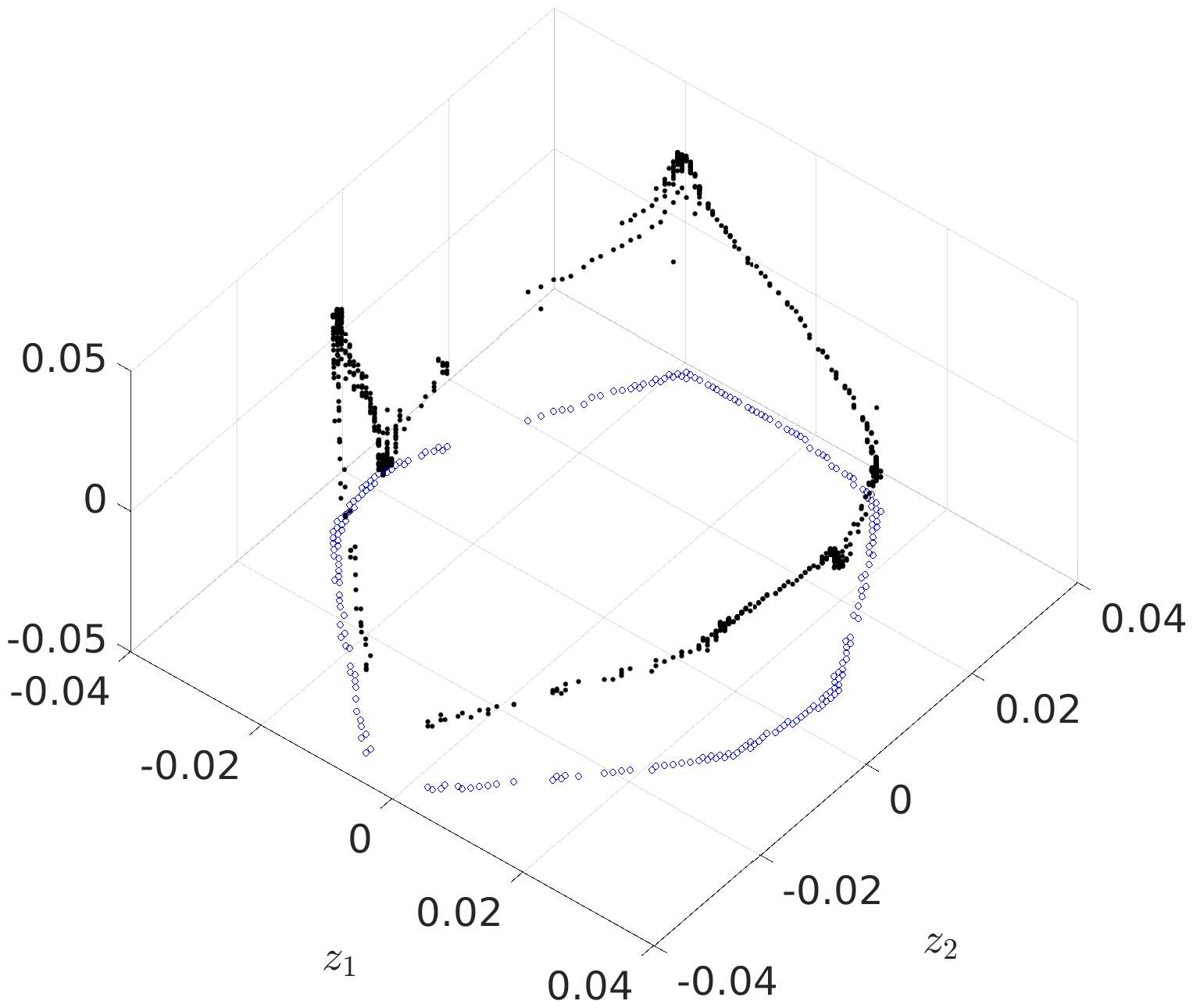}
    \end{minipage}   
    \vspace{0.7cm}

	\begin{minipage}{0.32\textwidth}
        \centering
        \subfiguretitle{$\varphi_6 \text{ vs. }\bar{\xi} $}
        \includegraphics[width=\textwidth]{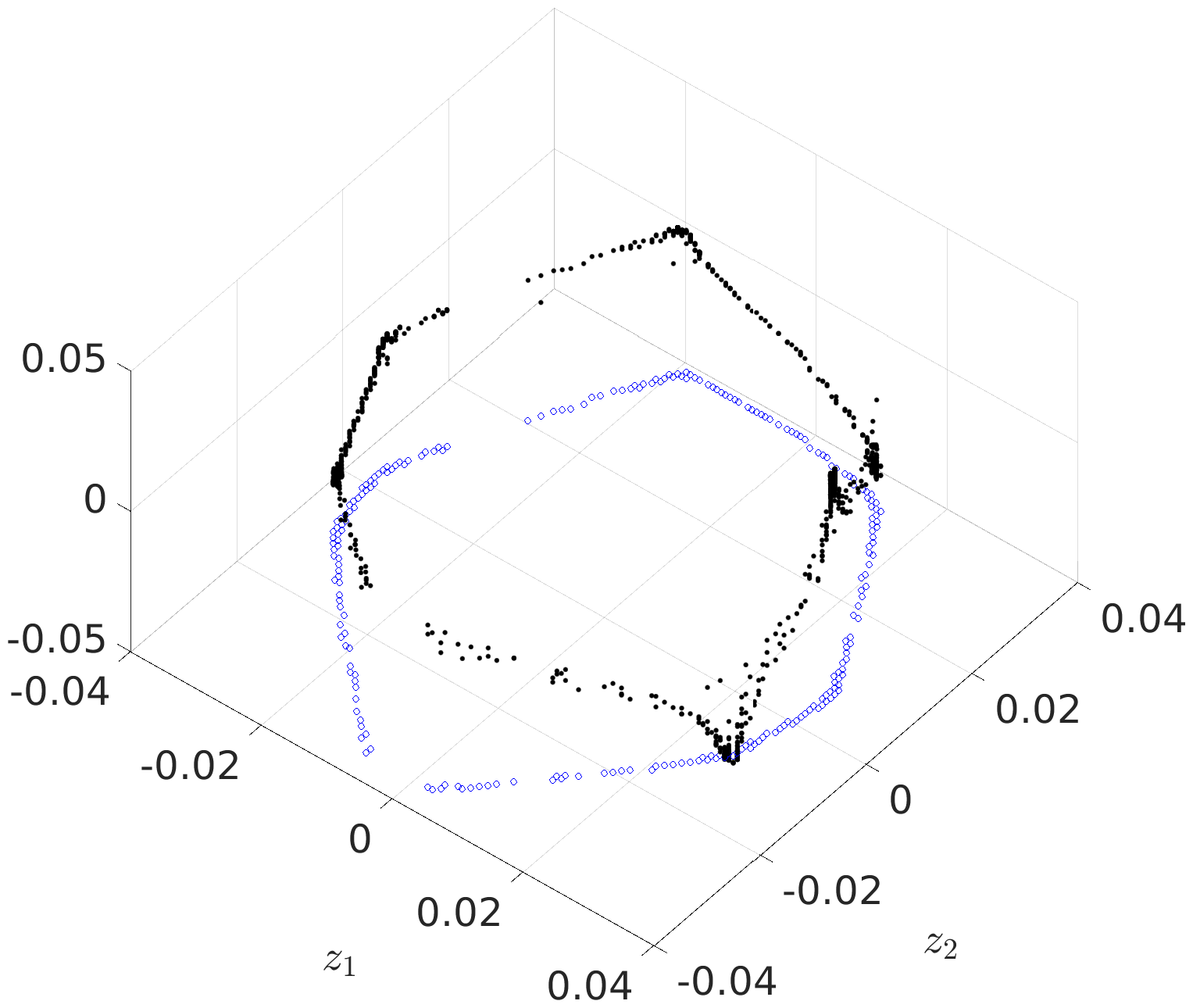}
    \end{minipage}
    \begin{minipage}{0.32\textwidth}
        \centering
        \subfiguretitle{$\varphi_7 \text{ vs. }\bar{\xi} $}
        \includegraphics[width=\textwidth]{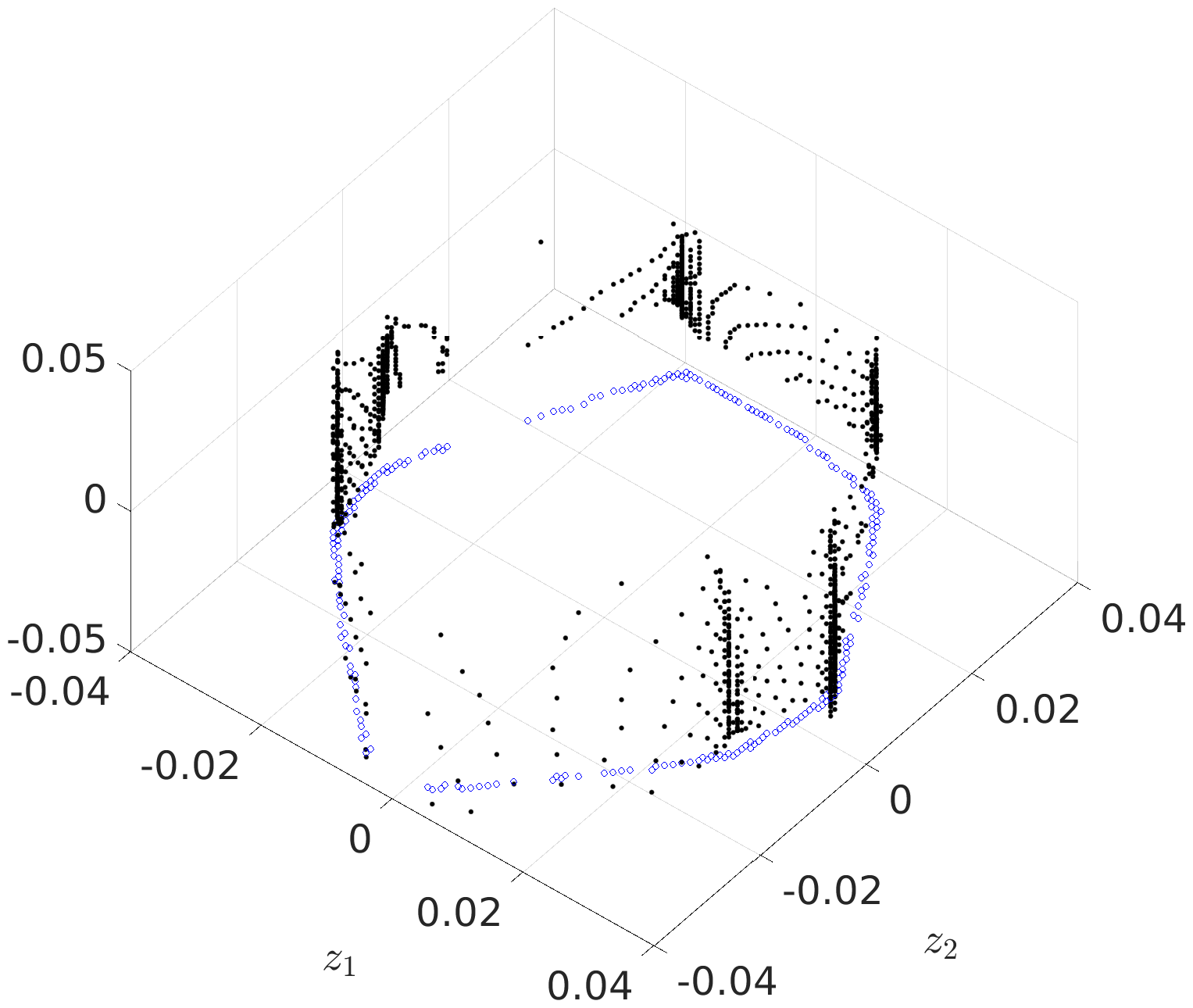}
    \end{minipage}
    \begin{minipage}{0.32\textwidth}
        \centering
        \subfiguretitle{$\varphi_8 \text{ vs. }\bar{\xi} $}
        \includegraphics[width=\textwidth]{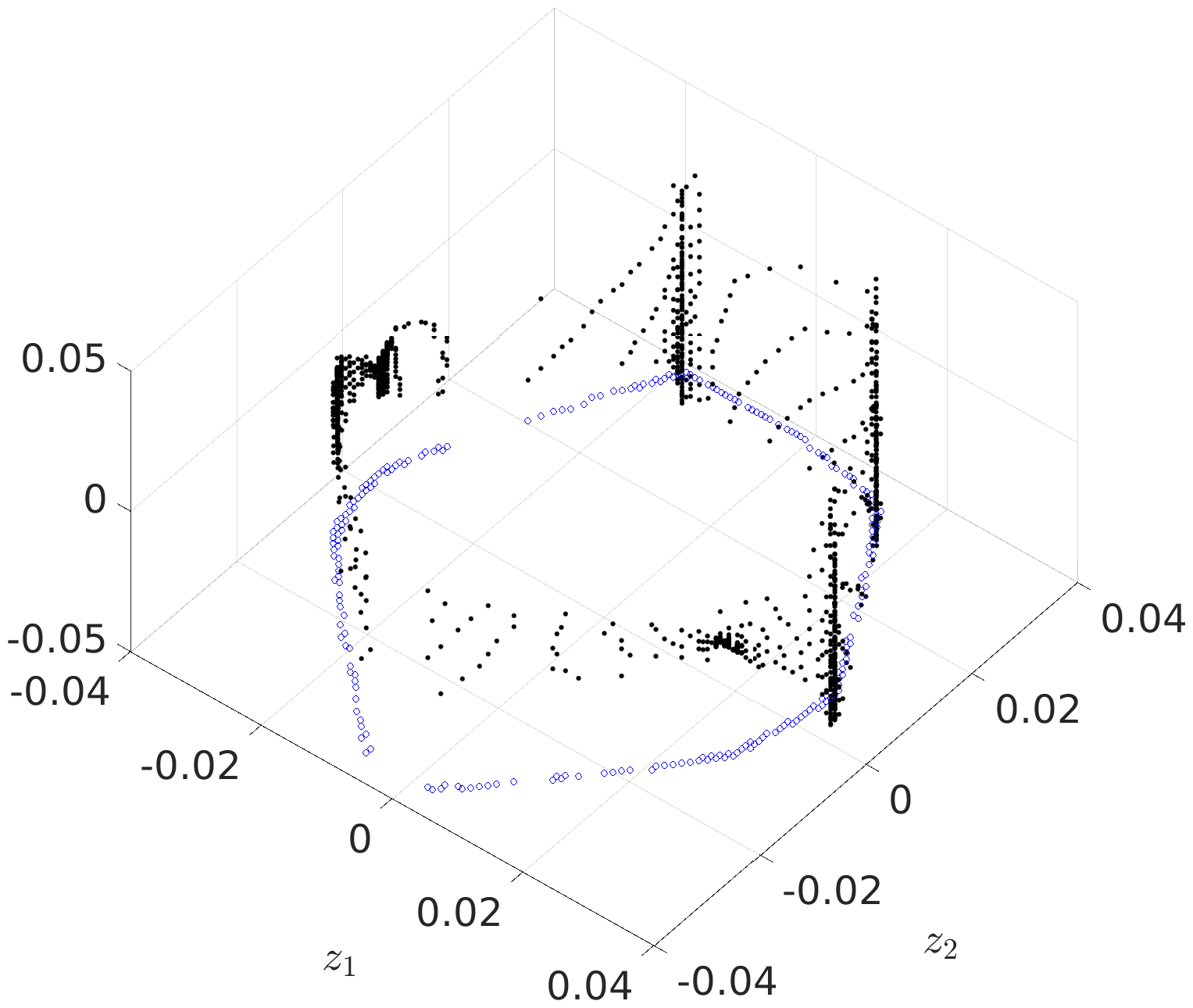}    
    \end{minipage}   
    \caption{Black dots: The values of the first nine eigenfunctions of $\mathcal{T}^t$ plotted against~$\overline{\xi}(x_i)$, $x_i\in\overline{\X}$. The blue markers indicate the~$\overline{\xi}(x_i)$ in the bottom plane. The seven dominant eigenfunctions ($\varphi_0$ to $\varphi_6$) seem to have a smooth dependency on~$\overline{\xi}$. In contrast, the values of the non-dominant $\varphi_7$ and $\varphi_8$ vary substantially over individual level sets of~$\overline{\xi}$.}
    \label{fig:functionaldependency}
\end{figure}

\paragraph{Circular potential in higher dimensions.} 

The identification of reaction coordinates is not limited to two dimensions. To show that our method can effectively find the reaction coordinates in high-dimensional systems, we extend the $7$-well potential to ten dimensions by adding a quadratic term in $x_3,\ldots,x_{10}$:

\begin{equation*}
     V(x) = \cos\left(7 \, \arctan(x_2, x_1)\right) + 10 \left(\sqrt{x_1^2 + x_2^2} - 1\right)^2 + 10\sum_{j=3}^{10}x_j^2\,.
\end{equation*}

We expect the one-dimensional circle $\{x\in\mathbb{R}^{10}~|~x_1^2+x_2^2=1,~x_j=0,~j=3,\ldots,10\}$ to be the transition path and accordingly choose a three-dimensional linear observable $\eta(x) = A\cdot x,~A\in\mathbb{R}^{3\times 10}$, where the coefficients $A_{ij}$ were again drawn uniformly from $[-1,1]$.

In ten dimensions, the computation of the reaction coordinate on all points of a regular grid is no longer possible due to the curse of dimensionality, and neither is the visualization of this grid.
Instead, we compute $\overline{\xi}$ on $10^5$ points sampled from the invariant measure and plot only the first three coordinates. Let this point cloud be called $\overline{\mathbb{X}}$.

Performing the standard procedure, i.e. embedding $\overline{\mathbb{X}}$ into $\mathbb{R}^3$ and identifying the one-dimensional core using diffusion maps, a two-component reaction coordinate is identified. Coloring the first three dimensions of $\overline{\mathbb{X}}$ by $\overline{\xi}$ (Figure \ref{fig:LemonSlice10D}a,b), we see that the expected reaction pathway is indeed parametrized. This pathway as well as the seven metastable states can also be recognized in a plot of the components of $\overline{\xi}(\overline{\X})$ plotted against each other, indicating that the information about the dominant eigenfunctions, thus the long-time jump process, is indeed retained by~$\overline{\xi}$.

\begin{figure}[htb]
    \centering
    \begin{minipage}{0.32\textwidth}
        \centering
        \subfiguretitle{a)}
        \includegraphics[width=\textwidth]{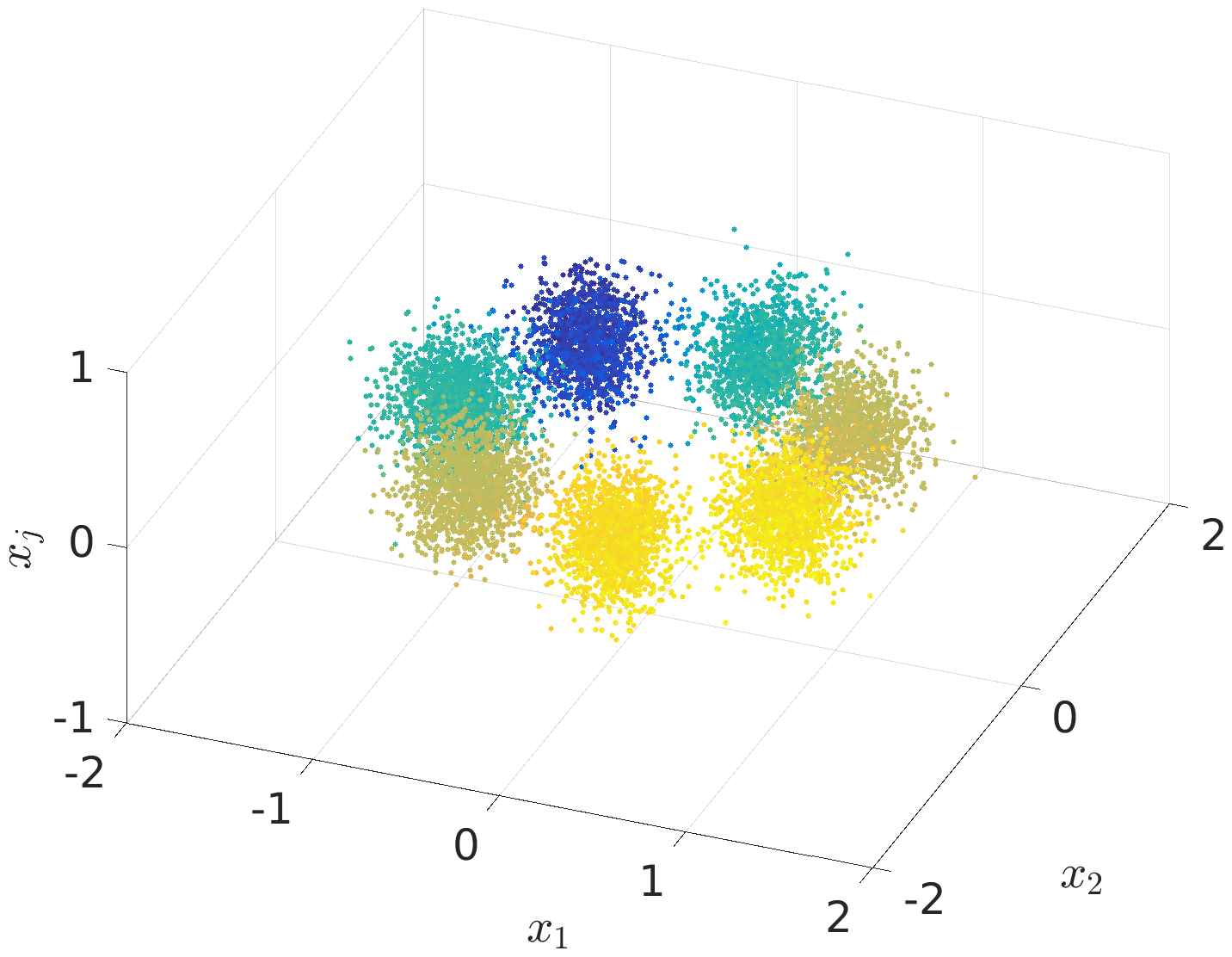}
    \end{minipage}
    \begin{minipage}{0.32\textwidth}
        \centering
        \subfiguretitle{b)}
        \includegraphics[width=\textwidth]{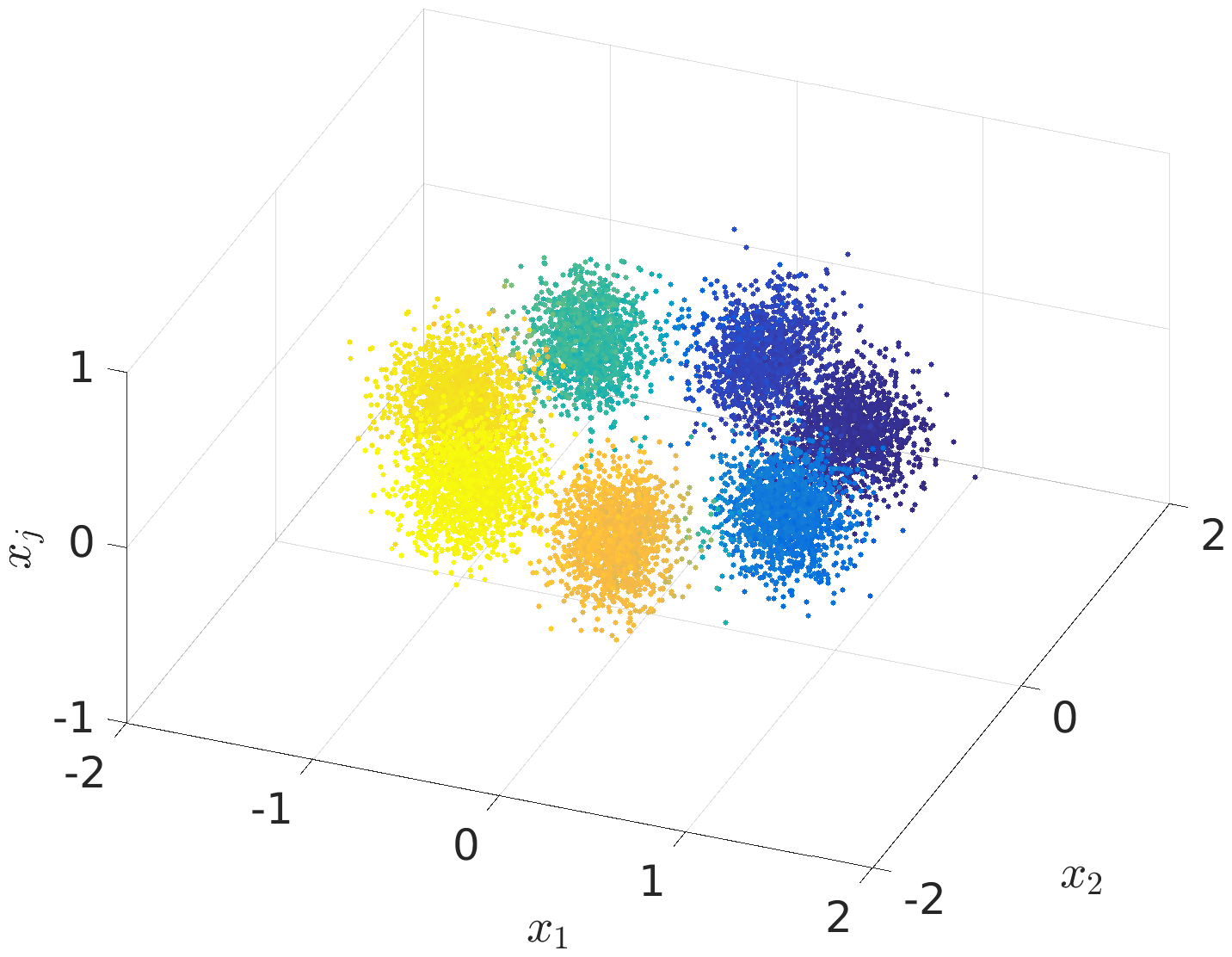}
    \end{minipage}
    \begin{minipage}{0.32\textwidth}
        \centering
        \subfiguretitle{c)}
        \includegraphics[width=\textwidth]{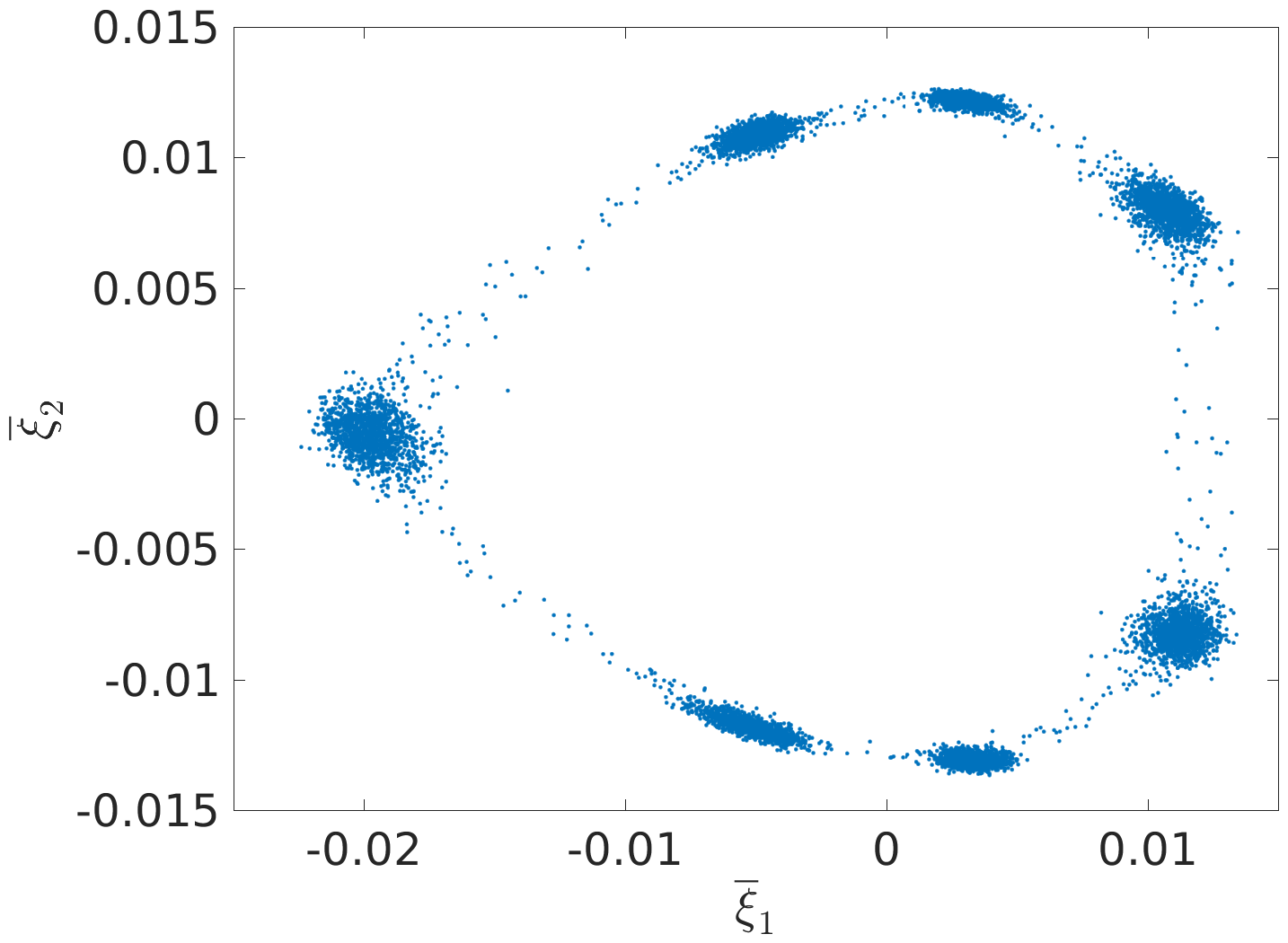}
    \end{minipage}    
    \caption{a) \& b) The two components $\overline{\xi}_1$ and $\overline{\xi}_2$ on the sampling points $\overline{\mathbb{X}}$. The picture shows the first three dimensions of $ x $, but is qualitatively the same when replacing $x_3$ by $x_j~,j=4,\ldots,10$. c) The values of $\overline{\xi}_1$ and $\overline{\xi}_2$ on $\overline{\mathbb{X}}$ plotted against each other.}
    \label{fig:LemonSlice10D}
\end{figure}

\subsection{Two quadruple well potentials}
\label{sec:Hilly flat}

Our theory is based on the existence of an $r$-dimensional transition manifold $\mathbb{M}$ in $L^1(\mathbb{X})$ around which the transition probability functions concentrate. In Appendix~\ref{app:RCexist}, we argued that the existence of an $r$-dimensional transition path suffices to ensure the existence of $\mathbb{M}$. Here we illustrate how the existence of the transition path is reflected in the embedding procedure.

For this we consider the ``hilly'' and ``flat'' quadruple well potentials
$$
V_1(x) =  (x_1^2 - 1)^2 + (x_2^2 - 1)^2 + 5\exp(-5\big(x_1^2 + x_2^2)\big)
$$
and
\begin{alignat*}{3}
V_2(x) &= 1 &- \exp\big(-10\big((x_1 - 1)^2 + (x_2 - 1)^2\big)^2\big) &- \exp\big(-10\big((x_1 - 1)^2 + (x_2 + 1)^2\big)^2\big)\\
&&-\exp\big(-10\big((x_1 + 1)^2 + (x_2 + 1)^2\big)^2\big)&-\exp\big(-10\big((x_1 + 1)^2 + (x_2 - 1)^2\big)^2\big)\,.
\end{alignat*}

\begin{figure}[htb]
    \centering
    \begin{minipage}{0.45\textwidth}
        \centering
        \subfiguretitle{$V_1$}
        \includegraphics[width=\textwidth]{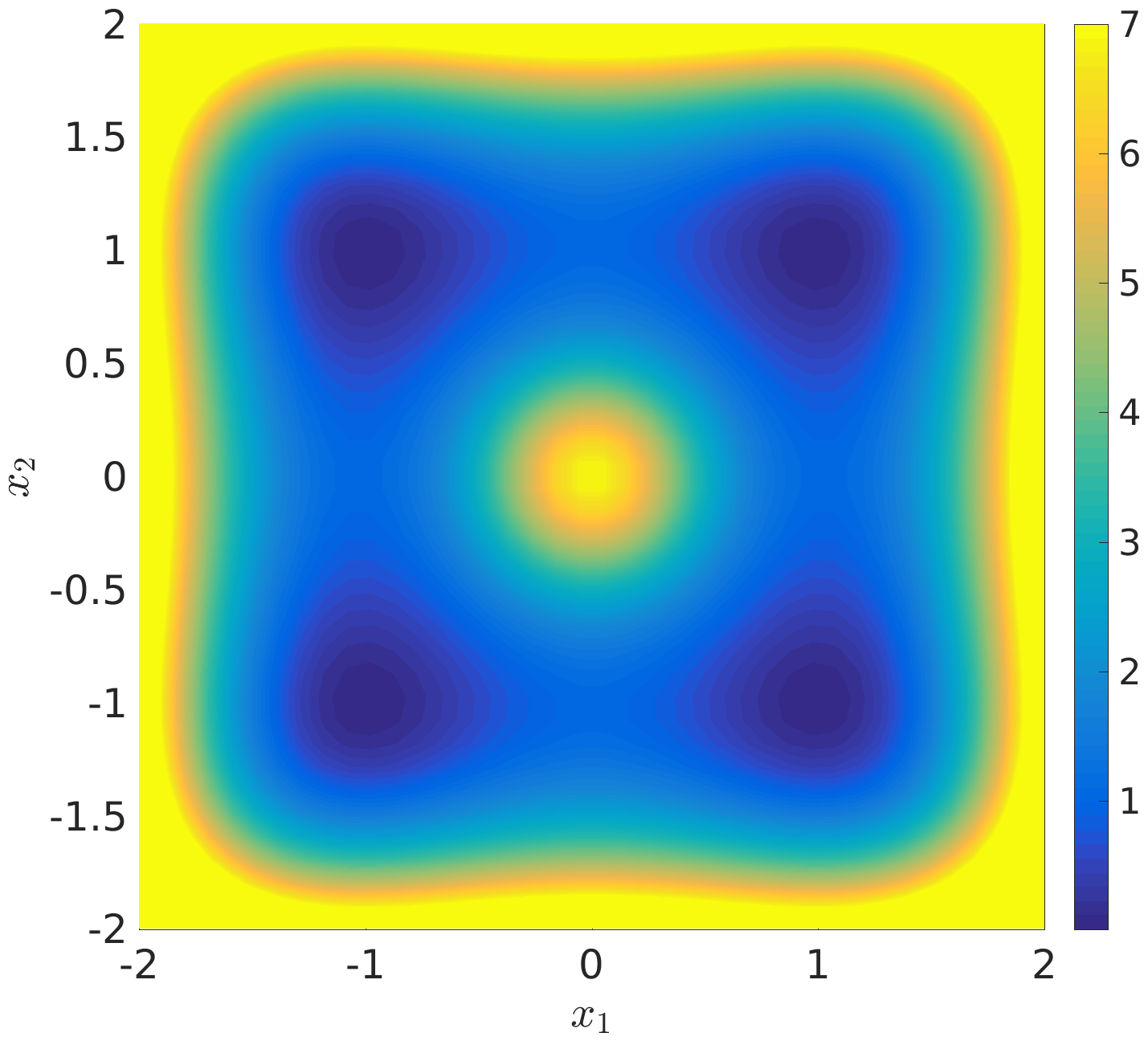}
    \end{minipage}
    \hfill
    \begin{minipage}{0.45\textwidth}
        \centering
        \subfiguretitle{$V_2$}
        \includegraphics[width=\textwidth]{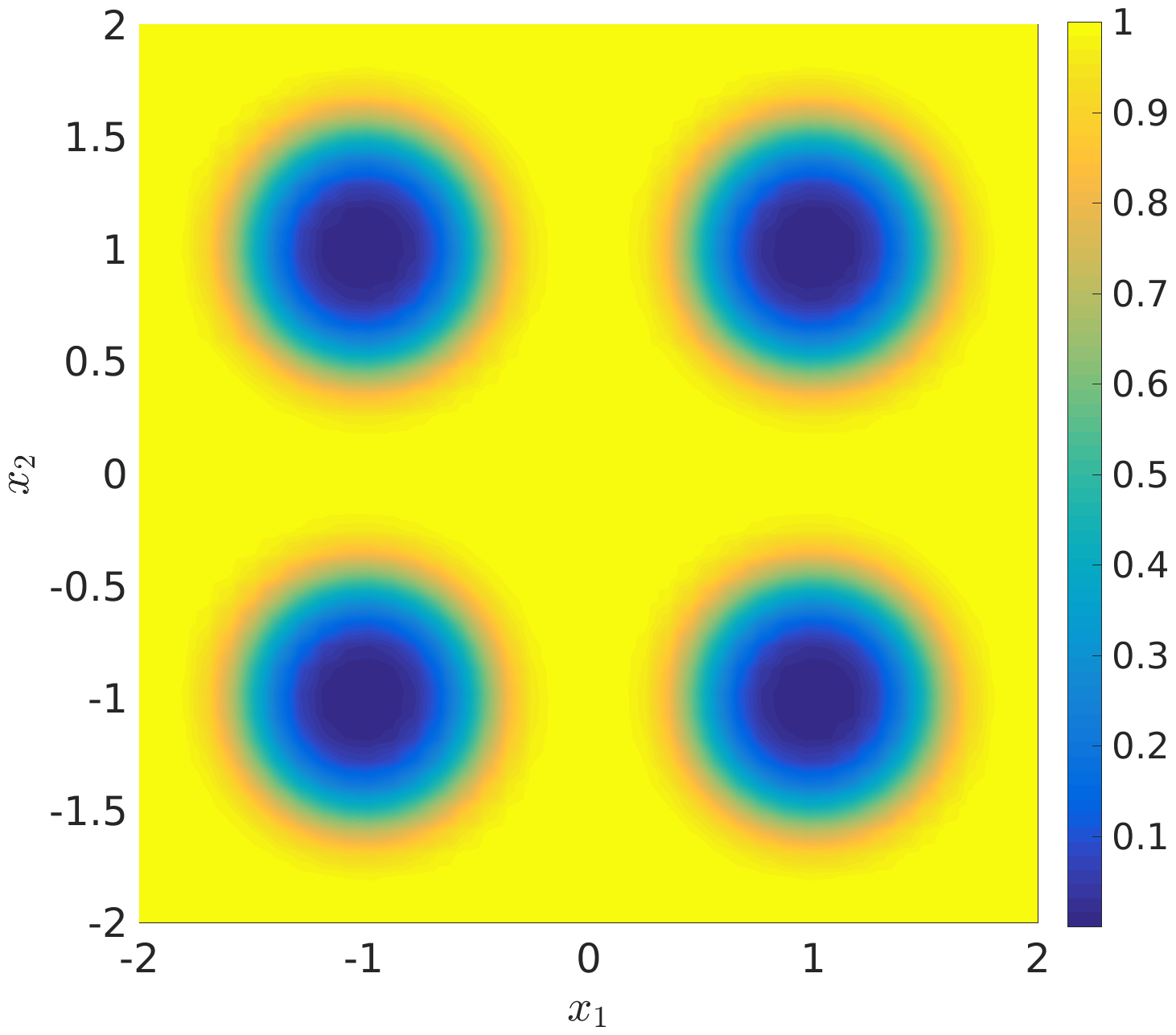}
    \end{minipage}
    \caption{The two quad-well potentials $V_1$ and $V_2$ possess qualitatively different transition regions.}
    \label{fig:quadwellpots}
\end{figure}

Both systems possess metastable sets around the four minima $(\pm 1,\pm 1)$, but~$V_1$ confines its dynamics outside of the metastable sets onto a one-dimensional transition path, whereas~$V_2$ does not impose such restrictions on the dynamics (see Figure~\ref{fig:quadwellpots}). For both potentials the time $t=1$ lies inside the slow-fast time scale gap. Assuming a one-dimensional transition manifold (wrongfully for $V_2$), we use the three linear observables~\eqref{eq:bananaobservables}. A $40\times 40$ grid on $[-2,2]\times[-2,2]$ is used as evaluation points for $\overline{\xi}$. The embedding of these points by $\mathcal{E}\circ \overline{\mathcal{Q}}$ can be seen in Figure \ref{fig:quadwellembeddings}. We observe a one-dimensional structure in the case of the ``hilly'' potential $V_1$, whereas the embedding points of the ``flat'' potential $V_2$ lie on a seemingly two-dimensional manifold. As these embeddings are approximately one-to-one with the respective transition manifolds $\mathbb{M}$, we conclude that in the case of $V_1$ the manifold $\mathbb{M}$ must be one-dimensional, whereas for $V_2$ it is two-dimensional. 

\begin{figure}[htb]
\centering
    \begin{minipage}{0.49\textwidth}
        \centering
        \subfiguretitle{a)}
        \includegraphics[width=\textwidth]{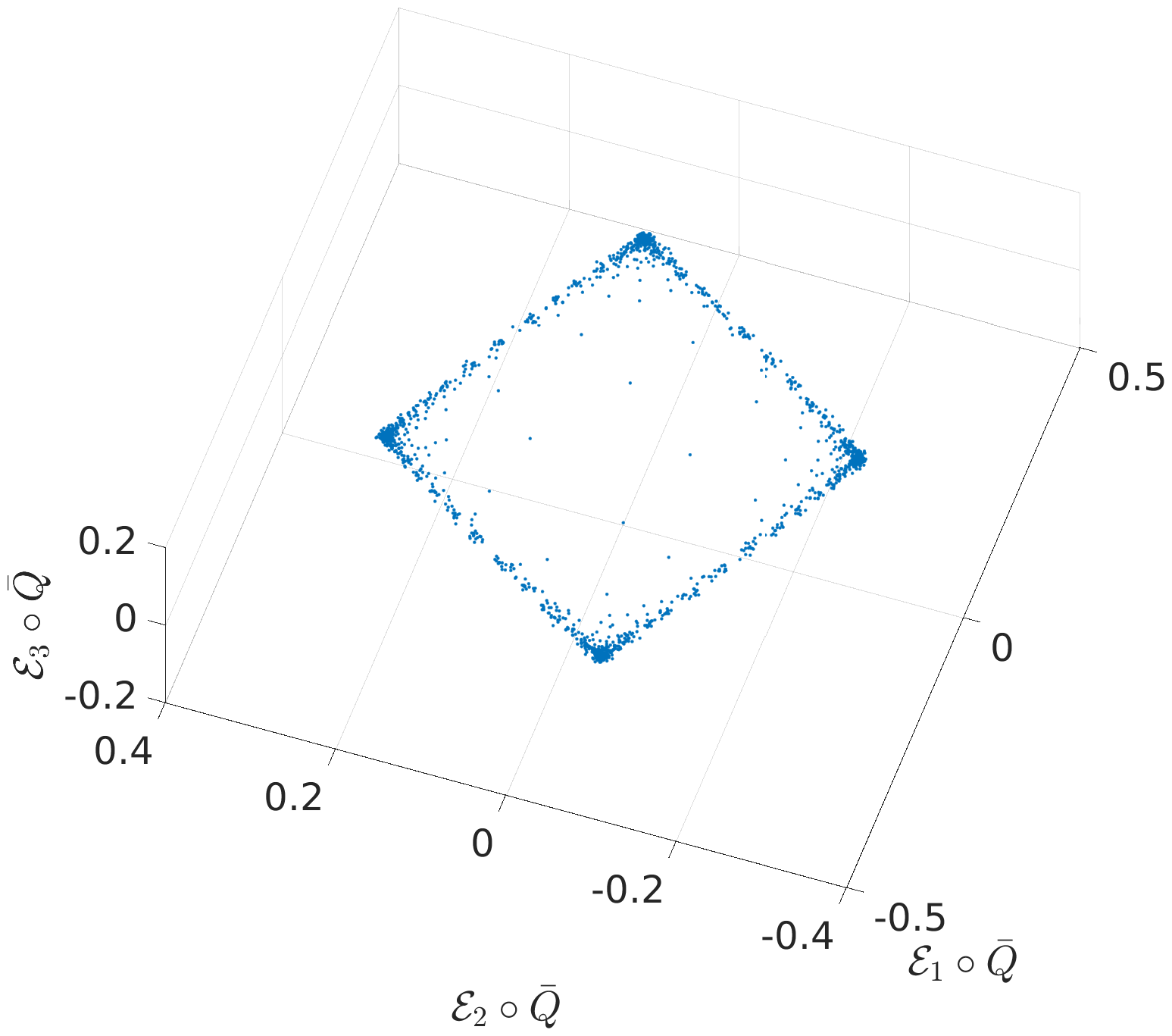}
    \end{minipage}
    \hfill
    \begin{minipage}{0.49\textwidth}
        \centering
        \subfiguretitle{b)}
        \includegraphics[width=\textwidth]{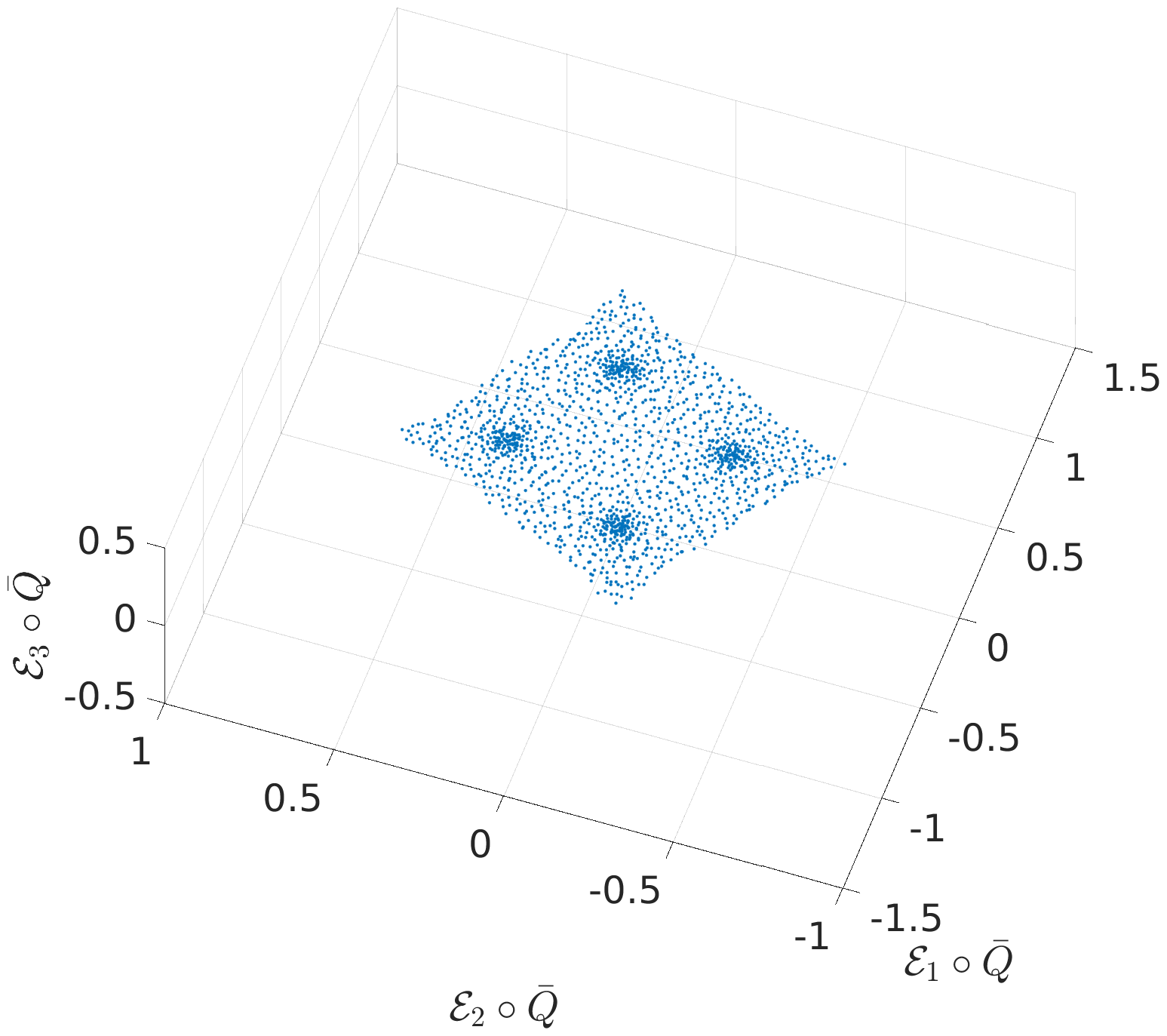}
    \end{minipage}
    \caption{Embedding of the grid points for the a) ``hilly'' and b) ``flat'' four well potential. A one-dimensional structure is only visible in a), i.e. in the presence of a one-dimensional transition path.}
\label{fig:quadwellembeddings}
\end{figure}

\section{Conclusion}
\label{sec:Conclusion}

Our main contributions in this paper are:
\begin{enumerate}[(a)]
\item We developed a mathematical framework to characterize good reaction coordinates for stochastic dynamical systems showing metastable behavior but no local separation of fast and slow time scales.
\item We showed the existence of good low-dimensional reaction coordinates under certain \emph{dynamical} assumptions on the system.
\item We proposed an algorithmic approach to numerically identify good reaction coordinates and the associated low-dimension transition manifold based on local evaluation of short trajectories of the system only.
\end{enumerate}
Our numerical examples show how the procedure works, that it can be used in higher dimensions, and the examples give further evidence that the dynamical assumptions from~(b) are valid in many realistic cases. The application of our approach to relevant biomolecular problems, e.g.\ in protein folding, is ongoing work.

Apart from the application to actual molecular systems, there are several open questions and challenges, which we will address in the future:
\begin{enumerate}[$\bullet$]
\item A rigorous mathematical justification for the dynamical assumption in Definition~\ref{def:reducibleprocess} in terms of the potential~$V$ and the noise intensity~$\beta^{-1}$ in~\eqref{eq:overdampedLangevin} would be desirable. This seems to be a demanding task, as the interplay between potential landscape and the thermal forcing is nontrivial. For $\beta^{-1}\to 0$ the problem can be handled by large deviation approaches; however, understanding increasing~$\beta^{-1}$ is challenging: the strength of noise increases, and additional transitions between metastable sets become more probable, as the barriers in the potential landscape become less significant, and thus the reaction coordinate may increase in dimension.
\item Also related to the previous point, the choice of the correct lag time~$t$ is crucial. Choosing the time too small, the concentration of the transition densities near a low-dimensional manifold in~$L^1$ may not have happened yet, but a too large lag time has severe consequences for the numerical expenses. If no expert knowledge of a proper lag time~$t$ is available, it has to be identified in a pre-processing step, for example using Markov State Model techniques \cite{A19-1}.
\item As discussed in the last part of Section~\ref{sec:NumApprox} and in Figure~\ref{fig:RC_realized}, we need the embedding~$\mathcal{E}$ not to distort transversality close to the transition manifold~$\mathbb{M}$ too much, such that the realized reaction coordinate~$\overline{\xi}$ is indeed a good one. Theoretical bounds shall be developed. This problem seems to be coupled with the problem of how to control the condition number of the embedding and its numerical realization.
\item The dimension~$r$ of the reaction coordinate may not be known in advance, hence we need an algorithmic strategy to identify this on the fly. Fortunately, once the sampling has been made, the evaluation of the embedding mapping~$\mathcal{E}$, and finding intrinsic coordinates on the set of data points embedded in~$\mathbb{R}^k$ has a negligible numerical effort, hence different embedding dimensions~$k$ can be probed via~\eqref{eq:gammadefinition}. Theorem~\ref{thm:embeddingThm} suggests that if the identified dimension of the reaction coordinate is smaller than~$k/2$, then a reaction coordinate of sufficient dimension has been found.
\item To benefit from the dimensionality reduction of the reaction coordinate $\xi$, the dynamics that generates the reduced transfer operator $\mathcal{T}^t_\xi$ has to be described in closed form. We are planing to employ techniques based on the Kramers--Moyal extension \cite{ZHS16} to again receive an SDE for a stochastic process on $\mathbb{R}^r$.
\item The embedding mapping~$\mathcal{E}$ is evaluated by Monte Carlo quadrature~\eqref{eq:Monte Carlo approximation}. Although Monte Carlo quadrature is known to have a convergence rate independent of the underlying dimension~$n$ of~$\X$, there is still an impact of the dimension on the practical accuracy. This we shall investigate as well.
\end{enumerate}

\section*{Acknowledgements}

This research has been partially funded by Deutsche Forschungsgemeinschaft (DFG) through grant CRC 1114 ``Scaling Cascades in Complex Systems'', Project B03 ``Multilevel coarse graining of multi-scale problems'', and by the Einstein Foundation Berlin (Einstein Center ECMath).

\bibliographystyle{abbrv}
\bibliography{findingreactioncoordinates}

\appendix

\section{Properties of $\mathbf{P}_{\boldsymbol{\xi}}$ }
\label{app:Pxi}

\begin{proof}[Proof of Proposition~\ref{prop:PxiProperties}]
(a) This property has been shown by Zhang~\cite{ZHS16} as well, we include the short reasoning for completeness. The linearity of~$P_{\xi}$ is obvious. The property~$P_{\xi}^2=P_{\xi}$ follows from~\eqref{eq:Pxi1} by noting that~$P_{\xi}f$ is constant on~$\mathbb{L}_z$ and that~$\mu_z$ is a probability measure for every~$z$.

(b) From~\eqref{eq:splitintPxi} we have for~$f,g\in L^2_{\mu}(\X)$ that
\begin{eqnarray}
\langle P_{\xi}f,g\rangle_{\mu} & = & \int_{\X} P_{\xi}f(x) g(x)\,d\mu(x) \nonumber\\
& \stackrel{\eqref{eq:splitintPxi}}{=} & \int_{\xi(\X)} \Gamma(z)\widehat{P_{\xi}\!\left(gP_{\xi}f\right)}(z)\,dz \nonumber\\
& \stackrel{(\ast)}{=} & \int_{\xi(\X)} \Gamma(z)\widehat{P_{\xi}f}(z)\widehat{P_{\xi}g}(z)\,dz\,, \label{eq:PxiSymm}
\end{eqnarray}
where~$(\ast)$ follows from the linearity of~$P_{\xi}$, and the fact that~$P_{\xi}f\vert_{\mathbb{L}_{\xi(x)}} = \mathrm{const}$, thus $\widehat{P_{\xi}\!\left(gP_{\xi}f\right)}(z) = \widehat{P_{\xi}f}(z)\widehat{P_{\xi}g}(z)$. Expression~\eqref{eq:PxiSymm} is symmetric in~$f$ and~$g$, hence it follows that~$\langle P_{\xi}f,g\rangle_{\mu} = \langle f,P_{\xi}g\rangle_{\mu}$.

(c) We first prove that~$P_{\xi}$ is an orthogonal projection:
\[
\langle P_{\xi}f, f-P_{\xi}f\rangle_{\mu} \stackrel{(b)}{=} \langle f, P_{\xi}f-P_{\xi}^2f\rangle_{\mu} \stackrel{(a)}{=} \langle f, P_{\xi}f-P_{\xi}f\rangle_{\mu} = 0\,.
\]
Thus,
\[
\|f\|_{L^2_{\mu}}^2 = \|f - P_{\xi}f\|_{L^2_{\mu}}^2 + \|P_{\xi}f\|_{L^2_{\mu}}^2 \ge \|P_{\xi}f\|_{L^2_{\mu}}^2\,,
\]
and the claim follows.

\end{proof}

\section{On the existence of reaction coordinates}
\label{app:RCexist}

To motivate the existence of low-dimensional reaction coordinates, let us assume that the dynamics of consideration has $d+1$ metastable regions $\mathbb{C}_0,\ldots, \mathbb{C}_{d} \subset \X$. Let $\mathbb{C}= \bigcup_i \mathbb{C}_i$. For a selected lag time~$t>0$ we make the following two assumptions:
\begin{enumerate}[1)]
\item Fast local equilibration: If~$x$ is in (or close to)~$\mathbb{C}_i$ then we have
\[
\mathcal{P}^t \delta_x \approx \varrho_i^{qs}
\]
where  $\varrho_i^{qs}$ is the quasi-stationary density of the metastable core~$\mathbb{C}_i$:
\[
\lim_{s\rightarrow\infty} \mathsf{P}\left[ \mathbf{X}_s = y\,\big\vert\, \bm{\tau}_{\mathbb{C}_i} > s\right] = \varrho_i^{qs}(y) dy
\]
with~$\bm{\tau}_{\mathbb{C}_i}$ being the (random) exit time from the set~$\mathbb{C}_i$.
\item Slow transitions: The typical transition time to reach $\mathbb{C} \setminus \mathbb{C}_i$ when starting in $\mathbb{C}_i$ is larger than $t$. In other words,~$t$ is such, that if the process~$\mathbf{X}_s$ transitions from~$x$ to some~$\mathbb{C}_i$, it did not transition through some other~$\mathbb{C}_j$ with high probability.
\end{enumerate}
These two assumptions essentially say that~$t$ is much larger than the fast time scales of the system, but smaller than the dominant time scales.
It follows that, for any $x \in \X$,
\[
\mathcal{P}^t \delta_x \approx \sum_{i=0}^d q_i(x) \varrho_i^{qs}, \quad \sum_{i=0}^d q_i(x) = 1\,,
\]
where by assumption 2) the coefficients $q_i(x)$ are given by the committor functions
\[
q_i(x) = \mathsf{P}\left[ \mathbf{X}_t\;\mbox{reaches $\mathbb{C}_i$ before $\mathbb{C}\setminus \mathbb{C}_i$} \,\big\vert\, \mathbf{X}_0 = x\right].
\]
We say that $\mathcal{P}^t\delta_x$ is an $r$-dimensional structure in $L^1(\X)$ if there is a function $\xi \colon \X \rightarrow \R^r$ that jointly parametrizes all the committor functions, i.e.,~$q_i= \tilde q_i \circ \xi$ with~$\tilde q_i \colon \R^r \rightarrow \R$. If this is the case, then
\[
\overline{\mathcal{Q}}(x) = \mathcal{P}^t \delta_x \approx \sum_{i=0}^d \tilde q_i(\xi(x)) \varrho_i^{qs} =: \mathcal{Q}(x)
\]
and clearly $\dim (\mathcal{Q}(\X)) \leq r$ since $\dim (\xi(\X)) = r$. Moreover, $r\leq d$ since we can explicitly construct $\xi \colon \X \rightarrow \R^{d}$ as $\xi = (q_1, \ldots, q_{d})$. This obviously parameterizes $q_1,\ldots, q_{d}$, and it also parameterizes $q_0$ since $q_0 = 1 - \sum_{i=1}^{d} q_i$.

However,~$r$ may also be smaller than~$d$. As an example, consider the potential with $4$ minima shown in Figure \ref{fig:appendix_commitors} on the left. At low temperatures, the ``hilly'' potential energy landscape confines all transitions between the minima $\mathbb{C}_0, \ldots, \mathbb{C}_3$ to a narrow region close to the red square connecting the four minima. Figure \ref{fig:appendix_commitors} shows the level sets of $q_0$, the level sets of the other committors are given by the rotational symmetry of the problem. All four committors can be jointly parameterized by a single coordinate $\xi$ which describes clockwise movement along the red square and is constant orthogonal to it. Therefore, $r=1$. Figure \ref{fig:appendix_commitors} on the right shows the situation with a ``flat'' energy landscape. Transition paths between the minima are no longer confined to a one-dimensional structure, and the committor level sets are more complicated. We can no longer parameterize all four committors with a single coordinate $\xi$, so $r>1$. On the other hand, $\dim(\X) = 2$ so $r=2$.

\begin{figure}[htb]
    \includegraphics[width=0.49\textwidth]{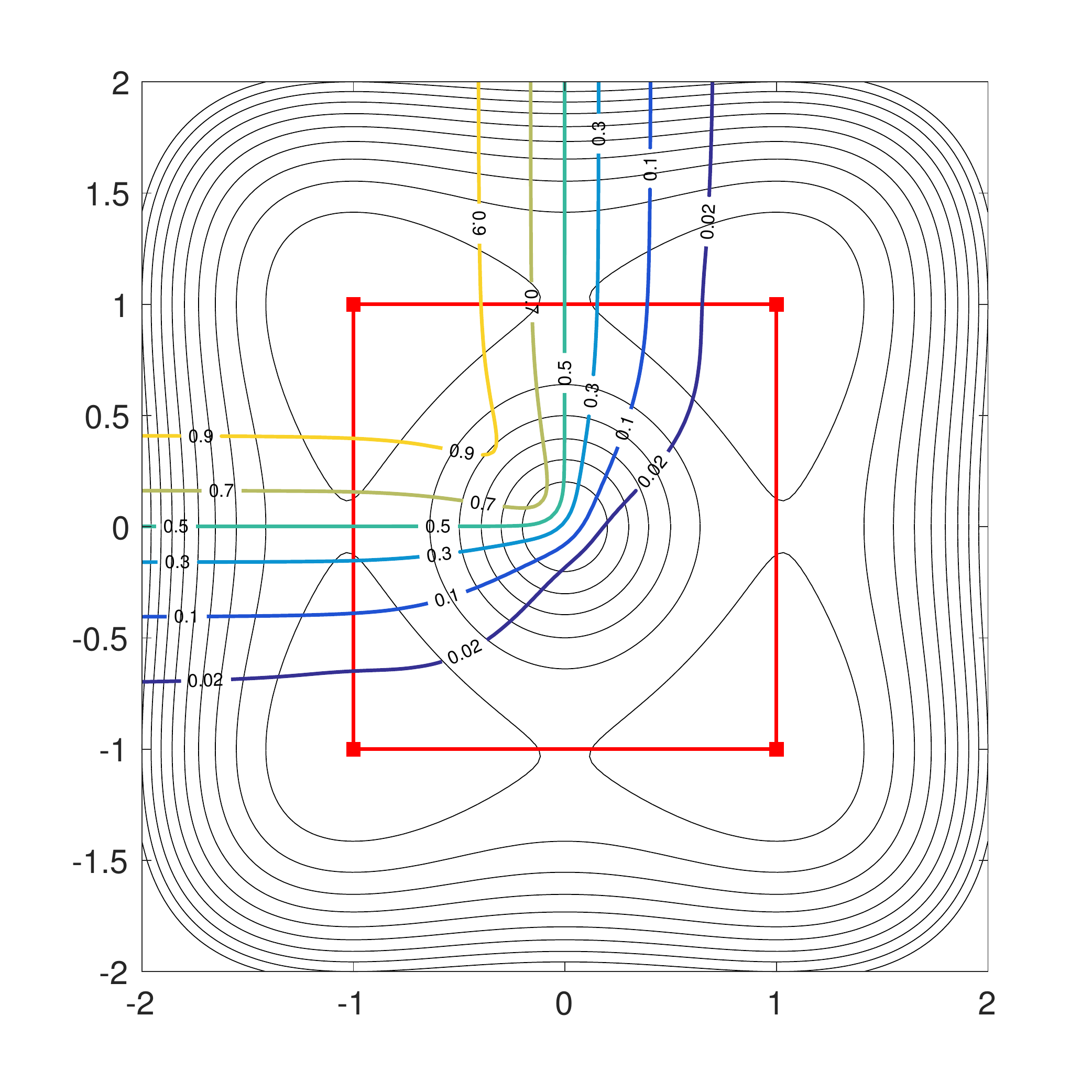}
    \includegraphics[width=0.49\textwidth]{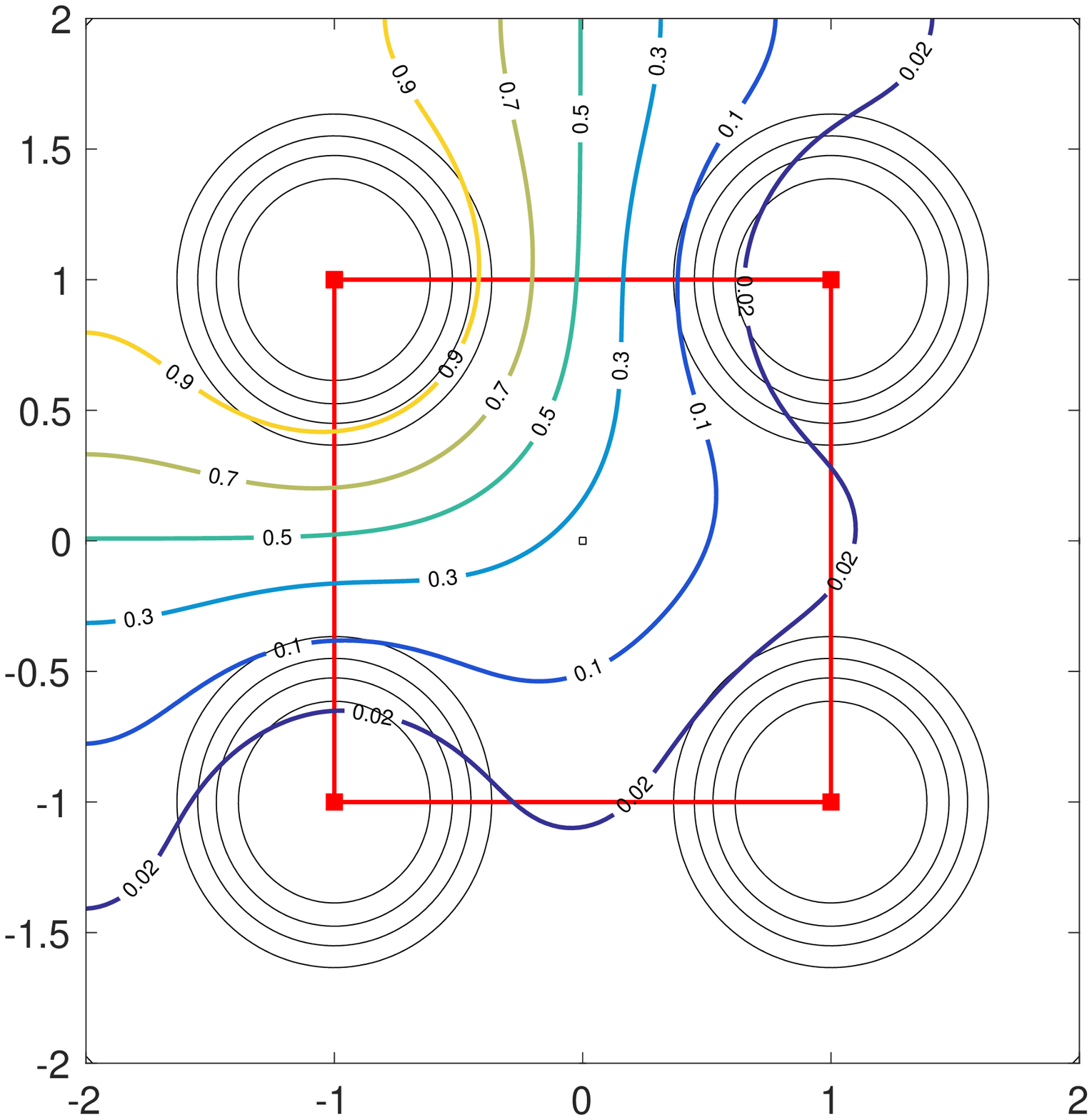}
    \caption{A potential energy landscape with four minima (black contours) and level sets of $q_1$ (colored contours). Left: The ``hilly'' landscape structure confines transition pathways to a narrow region close to the red square connecting the four minima. As a result, all committor level sets are orthogonal to this main transition path. Right: ``Flat'' landscape structure with more complicated level sets of the committors.}
    \label{fig:appendix_commitors}
\end{figure}

This structural difference of the potentials can also be seen when applying our algorithm to construct the reaction coordinate $\overline{\xi}$, see Figure~\ref{fig:quadwellembeddings} and Section~\ref{sec:Hilly flat}.

\end{document}